\documentclass[10pt,reqno]{amsart} 
\usepackage{amssymb,latexsym}
\usepackage{cite} 
\usepackage[height=190mm,width=130mm]{geometry} 


\theoremstyle{plain}
\newtheorem{theorem}{Theorem}
\newtheorem{lemma}{Lemma}
\newtheorem{corollary}{Corollary}
\newtheorem{proposition}{Proposition}

\theoremstyle{definition}

\theoremstyle{remark}

\newcommand{\half}{\frac{1}{2}}

\newcommand{\Z}{\mathbb{Z}}
\newcommand{\C}{\mathbb{C}}

\newcommand{\vac}{\mathbf 1}

\newcommand{\bbar}{\overline{b}}



\numberwithin{equation}{section} 




\newcommand{\rmap}{\longrightarrow}

\newcommand{\F}{\ensuremath{\mathcal{F}}}

\newcommand{\D}{\ensuremath{\mathcal{D}}}

\newcommand{\g}{\ensuremath{\Gamma}}

\newcommand{\ps}{{\raise 1pt\hbox{\tiny (}}}

\newcommand{\pss}{{\raise 1pt\hbox{\tiny [}}}
\newcommand{\pdd}{{\raise 1pt\hbox{\tiny ]}}}
\newcommand{\pd}{{\raise 1pt\hbox{\tiny )}}}

\newcommand{\bs}{{\raise 1pt\hbox{\tiny [}}}
\newcommand{\bd}{{\raise 1pt\hbox{\tiny ]}}}

\def\cross{\mathinner{\mathrel{\raise0.8pt\hbox{$\scriptstyle>$}}
                 \joinrel\mathrel\triangleleft}}


%

\usepackage{citehack}
\usepackage{amsmath,amsthm}
\usepackage{amsfonts}
\usepackage{amssymb}
\usepackage{longtable}
\usepackage[matrix,arrow,curve]{xy}

\def\D{\mathcal{D}}
\def\C{\mathbb{C}}

\def\F{\mathcal{F}}


\newcommand{\nn}{\nonumber \\}








\newcommand{\Cg}{\mathfrak{C}_g}
\newcommand{\Chat}{\widehat\C}




%

%





\theoremstyle{definition}
\theoremstyle{plain}

\DeclareMathOperator{\Res}{Res}

\newcommand{\End}{\textup{End}}
\newcommand{\Id}{\textup{Id}}

\newcommand{\SL}{\textup{SL}}

\newcommand{\wt}{{\rm wt}}




\begin{document}
\title[Torsor structure of level-raising operators]    
{Torsor structure of level-raising operators}    
\author{A. Zuevsky} 
\address{Institute of Mathematics \\ Czech Academy of Sciences\\ Zitna 25, 11567 \\ Prague\\ 
Czech Republic}
\email{zuevsky@yahoo.com}
\begin{abstract}
We consider families of reductive complexes
 related by level-raising operators and 
originating from an associative algebra. 
In the main theorem it is shown that the 
multiple cohomology of that complexes 
is given by the factor space of products 
of reduction operators. 
In particular, we compute explicit torsor structure of  
the genus $g$ multiple cohomology 
of the families of horizontal complexes with spaces of 
of canonical converging reductive differential forms  
for a $C_2$-cofinite quasiconformal 
strong-conformal field theory-type 
 vertex operator algebra  
associated to a complex curve. 
That provides an equivalence of multiple cohomology 
to factor spaces of products of sums of
 reduction functions 
with actions of the group of local coordinates automorphisms. 

AMS Classification: 53C12, 57R20, 17B69 
\end{abstract}

\keywords{Multiple cohomology; torsor structure of reductive functions; chain complexes}
\vskip12pt  
\maketitle
\begin{center}
Conflict of Interest and Data availability Statements
\end{center}
The author states that: 

1.) The paper does not contain any potential conflicts of interests. 

2.) The paper does not use any datasets. No dataset were generated during and/or analyzed 
during the current study. 

3.) The paper includes all data generated or analysed during this study. 

4.) Data sharing is not applicable to this article as no datasets were generated or analysed during the current study.

5.) The data of the paper can be shared openly.  

6.) No AI was used to write this paper. 
\section{Introduction}
In order to introduce cohomology of a geometric structure, 
it is often useful to attach an algebraic construction 
with easier computable cohomology. 
In many examples that strategy becomes effective.  
The question of computation of cohomology 
via various algebraic structure \cite{BZF, Bott, BS, F, Ga, Ko} 
associated complex manifolds 
continues to attract attention.   
The way we attach an auxiliary structure to a manifold determines 
the success of cohomology computation methods. 
In \cite{BDSK1, BDSK2, Huang, G, LQW} 
cohomology theories for vertex algebras   
\cite{BZF, FHL, LL, DL, K} and related structures \cite{LQW} 
 were introduced.
In particular, in \cite{Huang} cohomology of formal maps associated 
with extensions of 
modules for vertex algebras was defined. 
In such formulations, matrix elements of vertex operators 
 have formal parameters identified with 
local coordinates on Riemann surfaces 
\cite{Z1, MT1, MT2, MT3, MTZ, T, TW, TW1, TZ1}.     

In this paper, we start with a general setup 
and a cohomology theory spaces of converging functional 
depending on elements of graded associative algebras, and constituting  
 families of horizontal chain complexes. 
It is natural to look for an extension of 
the original horizontal complex (which we call level zero)  
to a family of $\kappa\ge 0$ complexes 
with the corresponding horizontal differential acting 
on higher level spaces. 
Elements of a particular level 
in the families of horizontal complexes are coherently 
related by standardly defined homology-based 
 level-raising operators.   
It is assumed that coboundary operators for horizontal complexes are 
given by reduction operators for corresponding functionals. 
Having the structure described above, we determine the standard cohomology 
of arbitrary level horizontal complexes. Since complexes are related by 
geometrically-motivated 
level-raising operators the resulting cohomology is called multiple. 
The notion of multiple cohomology  
constructed in this paper 
enriches cohomology structure 
of a horizontal complex of level zero. 
In the case of spaces of functionals invariant with respect to 
the action of a group $\mathcal G$, the corresponding cohomology becomes 
$\mathcal G$-invariant. 

Particular representations of the construction above is given by 
the example of admissible vertex operator algebras (see the definitions 
in the Subsection \ref{kundor}) \cite{BZF, FHL, DL, LL, K}  
considered on Riemann surfaces.  
In that formulation, the spaces $C^n(V, g)$ 
of horizontal complexes of a particular level 
$\kappa$ are formed by $n$-point differential forms  
 for a an admissible vertex operator 
algebra $V$ with formal parameters identified with local coordinates 
on a genus $g=\kappa$ Riemann surface 
formed in Schottky uniformization procedure \cite{Be1, Be2, McIT, M, TW, TW1}.   
 We use the level-raising transformations \eqref{normata} 
\cite{TW, MT1, MT2, MT3, TZ1, M}
in order to introduce spaces of higher-level invariant differential forms, 
as well as to form families of chain double complexes. 
That leads to more sophisticated structure of cohomology.   
The horizontal coboundary operators are defined via the vertex operator 
reduction formulas \cite{Z1, Z2, Y, TW, T, MT3, MT2, GT, MTZ} for corresponding 
genus.  
In examples of the Sections \ref{genuszero0}--\ref{apala} of genus zero and 
arbitrary genus Riemann surfaces, 
we consider a specific ansatz leading to the accomplishment of chain conditions
for coboundary operators.  
Then it is possible to find 
cohomology expressions in terms of Bers forms \cite{Be1, Be2, TW1} 
 on Riemann surfaces. 
In this case, corresponding cohomology is given by the factor space of the space of 
products of sums of Bers quasiforms  
$\Pi^{(g)}({\bf a}_n.{\bf z}_n)$ with transformed arguments.  

The plan of the paper is the following. 
In this Section we formulate the general abstract setup 
for associative algebras, and end up with the formulation 
of the main theorem. 
The Section \ref{formul} discusses the case of vertex operator 
algebras, and contains a prove Theorem \ref{coca}  
concerning the torsor structure of genus $g$ multiple 
cohomology of families of horizontal complexes. 
In the Sections \ref{genuszero0}--\ref{apala} we provide examples of 
genus zero and arbitrary genus $g$ complexes of 
reductive $n$-point functions for vertex operator algebras. 
The Appendix \ref{vtorog} recalls information about 
the Bers quasiforms \cite{Be1, Be2, TW1}
needed for formulation of the genus $g$ reduction formulas. 
The Appendix \ref{generererv} states definitions and basic properties 
for vertex operator algebras. 
\subsection{The general setup}
\label{mangaloid}
Let us fix our general setup and notations.    
 We denote a finite sequence of elements $(a_1$, $\ldots$, $a_n)$ as $\bf a_n$. 
An operator $\mathcal T$ acting on the $i$-th element of a sequence of elements 
$(a_1$, $\ldots$, $a_n)$ is denoted by $\mathcal T_i$, $1 \le i \le n$.  
In particular, denote by $T_i(a)$ the insertion operator 
$T_i(a)(a_1, \ldots, a_n)=(a_1, \ldots, a.a_i, \ldots, a_n)$ 
with some action $a.a_i$ of $a$ on $a_i$.  
Consider an associative graded algebra  
$\mathcal V=\bigoplus_{s\in \Z} \mathcal V_{(s)}$,    
endowed with a non-degenerate bilinear pairing 
 $(.,.): \mathcal V'\times \mathcal V \to \C$,  
where $\mathcal V'$ is the dual to $\mathcal V$ with respect to $(.,.)$, 
Let us call the level $\kappa=0$ spaces
 $C^n(\mathcal V, 0)$, $n \ge 0$ the spaces of $\C$-valued converging  
functionals $\F_{\mathcal V}^{(0)}({\bf x}_n)$ depending on 
$x_i \in \mathcal V$, $1 \le i \le n$ elements. 
Let $r_k=(b'_k, b_k)$ denote a pair of elements 
 $b'_k \in {\mathcal V}'$, $b_k \in \mathcal V$.  
Assuming convergence of the action of the 
insertion operator 
 $T(r_k)$ acting on 
 elements of $C^n(\mathcal V, 0)$, $n \ge 0$,    
we define inductively 
the level-raising operator and corresponding level $\kappa \ge 0$ 
spaces $C^n(\mathcal V,  \kappa)$ of converging functionals 
$\F_{\mathcal V}^{(\kappa)}({\bf x}_n)$ depending on $n \ge 0$ $\mathcal V$-elements. 
With a basis $\{b_k\}$ of $\mathcal V_{(k)}$, we obtain   
\begin{eqnarray}
\label{trubas}
\Delta^{(1)}(r_k): C^n(\mathcal V,  \kappa) \to C^n(\mathcal V,  \kappa+1),   
\quad \Delta^{(1)}(r_k)= \sum_{b_k \in \mathcal V_{(k)}} T(r_k),      
\nn
\label{normata}
\F_{\mathcal V}^{(\kappa+1)}({\bf x}_n)=\Delta^{(1)}(r_k).\; 
\F_{\mathcal V}^{(\kappa)}({\bf x}_n) 
\sum_{b_k \in \mathcal V_{(k)}} T(r_k).\F_{\mathcal V}^{(\kappa)}({\bf x}_n) 
= \sum_{b_k \in \mathcal V_{(k)}} \F_{\mathcal V}^{(\kappa)}({\bf x}_n, r_k).  
\end{eqnarray} 
 The following diagram corresponds to 
the families of horizontal complexes  
for $r_1$, $r'$, $r''$, $r''' \in \C$,  
\begin{eqnarray}
\label{buzovaish}
&& C^0\left(\mathcal V, 0\right) 
  \stackrel{\delta^{(0)}(x_1)} {\longrightarrow}  C^1\left(\mathcal V, 0\right)    
  \stackrel{\delta^{(0)}({\bf x}_2)} {\rmap} 
 \cdots  
\nn
&&   \downarrow  \Delta^{(1)}(r) 
\qquad \qquad \quad  \downarrow  \Delta^{(1)}(r') 
\nn
&& C^0\left(\mathcal V, 1\right) 
  \stackrel{\delta^{(1)}(x'_1)} {\longrightarrow}  C^1\left(\mathcal V, 1\right)    
 \stackrel{\delta^{(1)}({\bf x}'_2)} {\rmap} 
 \cdots  
\nn
&&   \downarrow  \Delta^{(1)} (r'')
\qquad \qquad \quad  \downarrow  \Delta^{(1)}(r''')
\nn
&&  \vdots \qquad \qquad \qquad \qquad \qquad \vdots 
\end{eqnarray} 
One can compose the higher level-raising operators in the coherent way 
\begin{eqnarray}
\label{govnata}
\Delta^{(\kappa)}({\bf r}_{\kappa}) 
= \Delta^{(1)} (r_1) \circ \cdots \circ \Delta^{(1)}(r_\kappa)   
= \sum\limits_{ {\bf b}_{k_i} \in {\mathcal V}_{(k_i)}} T({\bf r}_\kappa),    
\end{eqnarray}
where the  
summation is over $1 \le i \le \kappa$ independent  
basises ${\bf b}_{k_i} \in {\mathcal  V}_{(k_i)}$  
and their duals ${\bf b}'_{k_i} \in {\mathcal V}'_{(k_i)}$ 
with respect to the corresponding bilinear pairing $(.,.)$.  
One can also think of a notion of negative level.   

Assume that all elements of the spaces $C^n(\mathcal V, \kappa)$
are reductive: 
an element $\F_{\mathcal V}^{(\kappa)}({\bf x}_n)\in C^n(V, \kappa)$  
is representable as 
$\F_{\mathcal V}^{(\kappa)}({\bf x}_n)=\mathcal B^{(\kappa)}({\bf x}_n).     
 \F_{\mathcal V}^{(\kappa)}({\bf x}_{n-1})$,     
where $\mathcal B^{(\kappa)}({\bf x}_n)$ is an operator depending 
on ${\bf x}_n$ and $\kappa$.  
The operator $\mathcal B^{(\kappa)}({\bf x}_n)$ may 
 depend on a particular representation of elements $x_i \in \mathcal V$, 
and the kind of functions $\F_{\mathcal V}^{(\kappa)}({\bf x}_n)$. 
In certain cases, the 
 the reduction operators $\mathcal B^{(\kappa)}({\bf x}_n)$   
may have the form with a function $\phi(x_n)$ of $x_n$, 
$\mathcal B^{(\kappa)}({\bf x}_n).=   
\sum_{i=1}^n \sum_{l \ge 0} 
f^{(\kappa)}_{i, l}({\bf x}_n) \; T_i(\phi(x_n))$.    
In the Sections \ref{genuszero0}--\ref{apala} 
 we will see explicit examples of these formulas.   
Due to \eqref{normata}, \eqref{govnata}, and reductivity 
one is able to introduce families of horizontal complexes 
with the differentials given by  
 $\delta^{(\kappa)}({\bf x}_{n+1})=\mathcal B^{(\kappa)}({\bf x}_{n+1}):  
 C^n(\mathcal V, \kappa)  \to C^{n+1}(\mathcal V, \kappa)$,  
for all $({\bf x}_n)$. 
The families of horizontal complexes are related by 
the vertical level-raising operators \eqref{trubas} 
$\Delta^{(\kappa)}(r_\kappa)$. 
The chain condition for the differentials of the horizontal complexes  
 for $n \ge 0$, and all ${\bf x}_{n+1}$, ${\bf x}_n$ is 
$\delta^{(\kappa)}({\bf x}_{n+2})\;  \delta^{(\kappa)}({\bf x}_{n+1}).    
 \F_{\mathcal V}^{(\kappa)}({\bf x}_n)=0$.    
Note that in the classical chain condition considered in the examples of 
the Sections \ref{genuszero0}--\ref{apala} we may take ${\bf x}_{n+1}={\bf x}_n$.  
With the families of horizontal complexes one associates naturally 
the notion of the multiple level $\kappa$ 
standardly defined cohomology $H^n(\mathcal V, \kappa)$. 
It is worth to mention that both coboundary operators as well as cohomology 
do depend on choices of extra $x_{n+1}$ elements raising the total number 
of arguments. That fact is extremely convenient in the cohomology description 
of objects which can be associated with $\mathcal V$. 

Taking into account the reductivity of $C^n(\mathcal V, \kappa)$-functionals  
the chain conditions induce an algebra of operators forming   
 sequences of ideals 
$\mathcal I(n)$, $n\ge 0$, i.e., 
$\left\{ \mathcal B^{(\kappa)}({\bf x}_n) \right.$,
 $\ldots$,  $\left. \mathcal B^{(\kappa)}(x_0)  \right\}$,  
such that  
$\mathcal B^{(\kappa)}({\bf x}_{n-1})$ $\ldots$ $\mathcal B^{(\kappa)}(x_0)$ $\ne 0$ 
 but $\mathcal B^{(\kappa)}({\bf x}_n) \ldots \mathcal B^{(\kappa)}(x)$ vanishes. 
In general \cite{Ko}, such sequences lead to existence of related cohomology invariants. 
Namely, for a continual parameter $t\in \C$, a natural product $\cdot$,  
and with certain conditions on $\mathcal B^{(\kappa)}({\bf x}_n)$, 
the cohomology class 
$\left[ \partial_t\mathcal B^{(\kappa)}({\bf x}_n)\right.$ 
$\cdot$ $\mathcal B^{(\kappa)}({\bf x}_n)$ $\cdot$ 
$\left(\partial^{(\kappa)}({\bf x}_{n+1})\right)^{p+1}$ 
 $\left. \mathcal B^{(\kappa)}({\bf x}_n)\right]$, $p \ge 0$,   
becomes invariant with respect to the additive group of formal parameters. 
The reductivity $C^n(\mathcal C, \kappa)$ brings about also a projective structure 
with respect to the space of level zero elements.  
By applying extra chain conditions on horizontal differentials as well 
as on level-raising operators,  
 it is possible to introduce the total complex for $l=n+\kappa$,   
$\widehat\delta^{(\kappa)}({\bf x}_n)= 
\delta^{(\kappa)}({\bf x}_n) + (-1)^\kappa \Delta^{(\kappa)}({\bf r}_\kappa)$.   
Nevertheless, such extra conditions restrict strongly the spaces 
$C^n(\mathcal V,  \kappa)$.   
A suitable version of the total complex will be considered in a separate paper. 
 In the case when elements of spaces for families of complex \eqref{buzovaish}  
are invariant (see the Sections \ref{genuszero0}--\ref{apala} for examples)
with respect to a group $\mathcal G$ (e.g., a modular group)  
 we obtain a $\mathcal G$-invariant multiple cohomology.  
The natural consequence of the construction above constists in the following theorem. 
\begin{theorem}
\label{cocazero}
With operators $\mathcal B^{(\kappa)}({\bf x}_n)$, $n \ge 0$
satisfying the general chain complex property, 
the multiple cohomology $H^n(\mathcal V, \kappa)$ of $\mathcal V$  
of level $\kappa$ horizontal complexes    
$\left(C^n(\mathcal V, \kappa), 
\delta^{(\kappa)}({\bf x}_n) \right)$   
are equivalent to the factor space  
of vanishing products of reduction functions  
$\prod_{j=l}^j \mathcal B^{(\kappa)}({\bf x}_l)$ 
for $l$ equal $n+1$ over $n$.   \hfill $\qed$
\end{theorem}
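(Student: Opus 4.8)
The plan is to evaluate $H^n(\mathcal V,\kappa)=\ker\delta^{(\kappa)}_n/\mathrm{im}\,\delta^{(\kappa)}_{n-1}$ directly, writing $\delta^{(\kappa)}_n$ for $\delta^{(\kappa)}({\bf x}_{n+1})\colon C^n(\mathcal V,\kappa)\to C^{n+1}(\mathcal V,\kappa)$, and to transport the whole computation from the functional spaces $C^n(\mathcal V,\kappa)$ to the algebra of reduction operators $\mathcal B^{(\kappa)}$. First I would unwind the reductivity hypothesis: iterating $\F_{\mathcal V}^{(\kappa)}({\bf x}_n)=\mathcal B^{(\kappa)}({\bf x}_n)\,\F_{\mathcal V}^{(\kappa)}({\bf x}_{n-1})$ down to the base level exhibits every cochain as an ordered product $\mathcal B^{(\kappa)}({\bf x}_n)\,\mathcal B^{(\kappa)}({\bf x}_{n-1})\cdots\mathcal B^{(\kappa)}(x_1)$ applied to an element of $C^0(\mathcal V,\kappa)$. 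Since $\delta^{(\kappa)}({\bf x}_{n+1})=\mathcal B^{(\kappa)}({\bf x}_{n+1})$ by definition, the coboundary group $\mathrm{im}\,\delta^{(\kappa)}_{n-1}$ is exactly the image of left multiplication by $\mathcal B^{(\kappa)}({\bf x}_n)$, that is, the set of reduction products of length $n$; this is the denominator indexed by $l=n$ in the statement.

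Next I would identify the cocycles. A cochain lies in $\ker\delta^{(\kappa)}_n$ precisely when $\mathcal B^{(\kappa)}({\bf x}_{n+1})$ annihilates it, so under the reductive factorisation the cocycle group becomes the set of length-$(n+1)$ products that vanish. Here the chain condition $\delta^{(\kappa)}({\bf x}_{n+2})\,\delta^{(\kappa)}({\bf x}_{n+1})\,\F_{\mathcal V}^{(\kappa)}({\bf x}_n)=0$ and the induced nested ideals $\mathcal I(n)$ do the work: the property that $\mathcal B^{(\kappa)}({\bf x}_{n-1})\cdots\mathcal B^{(\kappa)}(x_0)\neq0$ while $\mathcal B^{(\kappa)}({\bf x}_n)\cdots\mathcal B^{(\kappa)}(x_0)=0$ says that the transition from length $n$ to length $n+1$ is exactly the step at which a product falls into the annihilating ideal. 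Reading the quotient through this filtration identifies $H^n(\mathcal V,\kappa)$ with the factor space of vanishing length-$(n+1)$ reduction products modulo the length-$n$ products, which is the asserted equivalence.

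The step I expect to be the main obstacle is reconciling two facts that pull in opposite directions: reductivity appears to make every element of $C^n(\mathcal V,\kappa)$ a coboundary, which would collapse the functional cohomology, yet the statement predicts a nontrivial factor space. I would resolve this by carrying out the computation not on the functionals themselves but on the associated graded of the operator algebra $\{\mathcal B^{(\kappa)}\}$ filtered by the ideals $\mathcal I(n)$, so that what survives is the graded piece $\mathcal I(n)/\mathcal I(n+1)$ rather than a quotient of functionals; the non-uniqueness of the reductive representation $\F_{\mathcal V}^{(\kappa)}=\mathcal B^{(\kappa)}({\bf x}_n)\,\F'$ is then precisely absorbed into the ambiguity of the level-zero base, which is why the outcome is a torsor and not a group with a canonical origin. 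To make this precise I would use the explicit form $\mathcal B^{(\kappa)}({\bf x}_n)=\sum_{i=1}^n\sum_{l\ge0}f^{(\kappa)}_{i,l}({\bf x}_n)\,T_i(\phi(x_n))$ to track how a change of representative propagates through a single reduction step and to check that the induced discrepancy is annihilated at length $n+1$, hence invisible in the factor space. A secondary technical point, which I would fold into the standing hypotheses, is the convergence of the level-raising sums $\sum_{b_k\in\mathcal V_{(k)}}T(r_k)$ on each space in play, needed so that the products of reduction operators are genuine operators between the $C^n(\mathcal V,\kappa)$.

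Finally, the torsor claim follows formally once this identification is in place: the level-zero base carries no distinguished normalisation, so the factor space acts freely on $H^n(\mathcal V,\kappa)$ without a canonical basepoint, and whenever the spaces $C^n(\mathcal V,\kappa)$ are invariant under a group $\mathcal G$ this action is $\mathcal G$-equivariant, yielding the $\mathcal G$-invariant torsor structure promised in the introduction.
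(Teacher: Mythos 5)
Your proposal follows essentially the same route as the paper: the paper's entire justification for this theorem is the identification $H^n(\mathcal V,\kappa)=\mathrm{Ker}\,\delta^{(\kappa)}({\bf x}_{n+1})/\mathrm{Im}\,\delta^{(\kappa)}({\bf x}_{n-1})=\prod_{i=1}^n\mathcal B^{(\kappa)}(x_i)/\prod_{j=1}^{n-1}\mathcal B^{(\kappa)}(x_j)$, obtained by unwinding reductivity to trade cochains for ordered products of reduction operators, exactly as you do. Your extra step --- resolving the apparent collapse (reductivity makes every cochain a coboundary) by passing to the filtration by the ideals $\mathcal I(n)$ and reading the quotient on the operator side rather than on the functionals --- is precisely what the paper leaves implicit here, and it matches how the paper later computes the vertex-operator-algebra case as a quotient of spaces $\mathcal L$ of products of reduction functions.
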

We can express directly the formal multiple cohomology associated to the 
families of the complex \eqref{buzovaish}   
 using recursively the vertex operator algebra 
reduction formulas 
$H^n$ $(\mathcal V, \kappa)$ 
 $=$ ${\rm Ker}$ $\delta^{(\kappa)}({\bf x}_{n+1})/{\rm Im}$   
$\delta^{(\kappa)}({\bf x}_{n-1})$ $=$  
$\prod_{i=1}^n$ $\mathcal B^{(\kappa)}$ $(x_i)/$ $\prod_{j=1}^{n-1}$  
$\mathcal B^{(\kappa)}(x_j)$.      
\section{The multiple cohomology associated with vertex operator algebras}
\label{formul}
In this Section, using the invariance resulting from the torsor formulation 
of $n$-point vertex operator algebras functions, 
 we show how to construct a coordinate-invariant canonical 
intrinsic cohomology of Riemann surfaces associated 
to an admissible vertex operator algebras. 
Though, the original cohomology associated to a vertex operator algebra 
does depend on the choice of vertex operator algebra raising elements, 
by using the torsor approach to show that that cohomology is 
actually canonical. 
It is very important to have a version of cohomology invariant 
with respect to changes of coordinates. 
\subsection{The vertex operator algebra setup}  
\label{kundor}
Let $\mathcal V=V$ be a vertex operator algebra.  
In this Section we introduce the spaces $C^n(V, \kappa)$ 
 of level $\kappa=g$ differential forms  
$\F_{\mathcal V}^{(g)}({\bf x}_n)$ 
depending on $n$ arguments, $x_i=(v_i, z_i)$, $1 \le i \le n$,  
${\bf x}_n$ $=$ $(v_1$, $z_1$; $\ldots$; $v_n$, $z_n)$ 
for $v$, $v_1$, $\ldots$, $v_n \in V$,   
and formal parameters $(z_1, \ldots, z_n)$  
considered as local coordinates on a genus $g$ Riemann surface  
for a vertex operator algebra $V$.  
Let us specify our notation suitable for vertex operator algebra purposes. 
 We denote a product of differentials a 
$dz_1\ldots dz_n$ as ${\bf dz}_n$. 
Let $\wt(a)$ denote the weight of a homogeneous 
 vertex operator algebra element $a\in V$ 
with respect to the zero Virasoro mode $L(0)a=\wt(a)a$  
(see the Appendix \ref{generererv}).  
Weighted product of differentials $dz_1^{\wt(a_1)} \ldots dz_n^{\wt(a_n)}$ 
 appears as ${\bf dz}_n^{\wt({\bf a}_n)}$. 
Let $V$ be 
a simple $C_2$-cofinite quasiconformal vertex operator algebra 
(the general facts about   
vertex operator algebras and their properties
 is recalled in the Appendix \ref{generererv}) 
 of strong conformal field theory type 
 with $V$ isomorphic to the contragredient module $V'$ 
\cite{Z1, G, LL, BZF, TW, T}. 
We call such vertex operator algebras admissible. 

The notion of the level corresponds to the genus of a Riemann surface 
on which formal parameters $z_1$, $\ldots$, $z_n$ are considered 
as local coordinates. 
 One defines the genus zero differential form  
(corresponding to a genus zero $n$-point function for the corresponding 
vertex operator algebra $V$)  
given by the expression containing $n$ vertex operators 
$Y(x_i)$ for $v'\in V'$ dual to $v$ 
 by means of a dual pairing $\langle .,.\rangle_1$ (see the Subsection \ref{voa}), 
\begin{eqnarray}
\label{bucharas}
 \F_V^{(0)}({\bf x}_n)=\langle v', {\bf Y}({\bf x}_n).v\rangle_1 
{\bf dz}^{\wt({\bf v}_n)},
\end{eqnarray} 
for $\rho=1$ defined on $V$. 
Then $C^n(V, 0)$ is the space of all such differential forms. 
Let $y=((b', w'), (b, w))$, $b \in V$, $b'\in V'$ be  
dual to $V$, $w$, $w' \in \C$.  
Assuming convergence of the action of the 
insertion operator 
 $T(y)$ acting on the genus zero differential forms 
 of $C^n(V, 0)$,    
we define 
the genus-raising operator given by   
$\Delta^{(1)}(w_{\pm 1})= \sum_{ b_k \in V_{(k)}} T(y)$,  
with respect to the corresponding bilinear pairing denoted on $V$.  
For each $1 \le a \le g$,  
let $\{b_a \}$  denote a homogeneous  $V$-basis and let $\{\bbar_a\}$ 
be the dual basis.    
Define for $1 \le a \le g$, by \eqref{gensher}  
for formal $\rho_a$. 
Then $\{b_{-a}\}$
 is a dual basis with respect to the bilinear pairing
  $\langle .,. \rangle_{\rho_a}$ 
with adjoint given by \eqref{tuta} 
for $u$ quasiprimary of weight $N$.
Let ${\bf b}_+=b_1\otimes\ldots \otimes b_g$
 denote an element of a $V^{\otimes g}$-basis. 
Let $w_a$ for $a\in\{-1, \ldots, - g, 1, \ldots, g\}$ 
be $2g$ formal variables.   
Denote ${\bf t}_g$ $=$ $({\bf w}_{\pm g}$, ${\bf \rho}_g)$ $=$  
 $\left(w_1, w_{-1}\right.$, $\rho_1$, $\ldots$, $w_g$, $w_{-g}$, 
$\rho_g$, $w_1$, $w_{-1}$, $\rho_1$, $\ldots$, $w_g$,  
$w_{-g}$, $\left. \rho_g \right)$.  
For ${\bf w}_{\pm g}= (w_1, w_{-1},  \ldots, w_g, w_{-g})$,   
 the insertion operator $T({\bf t}_g)$  
 defines the genus-raising operator  
$\Delta^{(g)}({\bf w}_{\pm \kappa})$ by 
\begin{eqnarray}
\label{benderoz}
&& \F_V^{(g)} ({\bf x}_n, {\bf w}_{\pm g})  
= \Delta^{(g)} ({\bf w}_{\pm g}).\F_V^{(0)} ({\bf x}_n)  
\\
&& = \sum\limits_{{\bf b}_+} T\left({\bf Y}({\bf t}_g) 
\prod_{a=1}^g \rho_a^{\wt(b_a)} \right).  
\langle w', {\bf Y}({\bf x}_n)\rangle_1 \; {\bf dz}^{\wt({\bf v}_n)}  
\nn
\nonumber
  && = \sum\limits_{{\bf b}_+} 
\F_V^{(0)}({\bf x}_n, b_1, w_1; \bbar_1, w_{-1}; \ldots; b_g, w_g;  
\bbar_g, w_{-g})    
\prod_{a=1}^g \rho_a^{\wt(b_a)} {\bf dw}^{\wt({\bf b}_+)}  
{\bf dz}^{\wt({\bf v}_n)}. 
\end{eqnarray} 
 In \eqref{benderoz} the sum is over any basis $\{{\bf b}_+\}$ of $V^{\otimes g}$. 
This corresponds to the genus $g$ vertex operator algebra $V$ $n$-point 
function in the Schottky parametrization \cite{TW, T, Zo}. 
Note that in \eqref{benderoz}  
we insert a sequence of vertex operators corresponding to 
$\left(b_1 \right.$, $w_1$; $\bbar_1$, $w_{-1}$; $\ldots$; $b_\kappa$, $w_g$;  
$\bbar_g$, $\left. w_{-g}\right)$ after the vertex operators for 
the arguments ${\bf x}_n$. 

One might define \eqref{benderoz} containing a different order of 
vertex operators taking into account the comutation properties of $V$.  
This definition is motivated by the sewing relation \eqref{telefon}
 and ideas in \cite{TW1, TW, MT2}. 
This is similar to the sewing analysis employed in \cite{Z2, G}. 
For all ${\bf x}_n$,    
$C^n(V,  g)$, $g \ge 1$, $n \ge 0$, are the spaces 
of all genus $g$ differential forms obtained via the applications 
of genus-raising operators 
$\Delta^{(g)}({\bf w}_{\pm g})$.  
As we see from the definition of the level-raising operator \eqref{benderoz},  
the differential forms 
$\F_V^{(g)}({\bf x}_n)$     
depend on the parameters $\rho_a$   
via the dual vectors ${\bf b}_-=b_{-1}\otimes\ldots \otimes b_{-g}$
 as in \eqref{gensher}.
 In particular, setting $\rho_a=0$ for some $1 \le a\le g$,  
$\F_V^{(g)}({\bf x}_n)$ then degenerates to a level   
$g-1$ differential form.  
Note that in all examples of our construction 
given in the Sections \ref{genuszero0}--\ref{apala}, the corresponding convergence of 
$n$-point vertex operator algebra functions was shown in \cite{TW}.  

In the vertex operator algebra case, the genus $g$ horizontal differentials 
 are $\delta^{(g)}({\bf x}_{n+1})$ $=$ $B^{(g)}({\bf x}_{n+1})$ 
acting on differential forms 
$\F_V^{(g)}\left({\bf x}_n\right) \in C^n(V, g)$ giving  
an element of $C^{n+1}(V, g)$, where 
$B^{(g)}({\bf x}_{n+1})=\sum_{i=1}^n \sum_{l\ge 0}   
f^{(g)}_{i, l}({\bf x}_{n+1})   
\;T_{i'}(v_n(l'))$.    
The actual form of the operator $B^{(g)}({\bf x_n})$  
 depends on $g$  
and on the way $\F_V^{(g)}({\bf x}_n)$ is defined 
\cite{Z1, TW, TW1, TZ1, MTZ}.  
Then the chain condition for this operator,  
in particular, in examples of the Sections \ref{genuszero0}--\ref{apala}, 
 is given by  
\begin{eqnarray}
\label{prodo}
&& \delta^{(g)}(v_{n+2}, z_{n+2}; {\bf x}_{n+1}) 
\; \delta^{(g)}(v_{n+1}, z_{n+1}; {\bf x}_n)  
\\
&&
\nonumber 
\quad =\sum\limits_{k=1}^n  \sum\limits_{j \ge 0 \atop j'\ge 0} 
\sum\limits_{k'=1}^{n+1} 
f^{(g)}_{N, j} (x_{n+2}, z_k) \;    
f^{(g)}_{N', j'}(x_{n+1}, z_{k'}) \;       
 T_k(v_{n+2}(j)) \;  
  T_{k'}(v_{n+1}(j')). 
\end{eqnarray}
In the Sections \ref{genuszero0}--\ref{apala}
 we consider specific examples of this Section construction  
provided by consideration of a vertex operator algebra $n$ functions 
considered on genus $g$
 Riemann surfaces \cite{FK} forms in the Schottky uniformization 
procedure. 
When the formal parameters ${\bf z}_n$ are associated to local 
coordinates on Riemann surfaces, 
the general vertex operator algebra 
 reduction formulas take their explicit form \cite{TW}. 
We will see that the functions 
$f^{(g)}_{N, j} (x, z_k)$ depend not only on $z$ from $x=(v, z)$, 
but also on $v$. 
We will show also that the chain condition \eqref{prodo}
 may be related, in particular, to 
 the corresponding  
Ward indentity conditions \eqref{wardy} and of the Proposition \eqref{kurst}.  
The results of Theorem 13.1 of \cite{G}, 
(see also \cite{Z2})  shows that 
for an admissible vertex operator algebra $V$, elements 
$\F_V^{(\g)} ({\bf x}_n) \in C^n(V, g)$ are 
  absolutely and locally uniformly convergent
 on the corresponding sewing domain.   
Thus, the consideration above leads us to the following   
\begin{proposition}
\label{normodor}
For an admissible vertex operator algebra $V$, 
 application of the genus-raising operator \eqref{benderoz}    
results in families of complexes  
$\left(C^n(V,  g), \delta^{(g)}({\bf x}_n)\right)$  
of convergent canonical differential forms. \hfill $\qed$
\end{proposition}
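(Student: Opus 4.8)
The plan is to unpack the three assertions bundled into the statement and treat them separately: that each $\F_V^{(g)}({\bf x}_n)$ produced by \eqref{benderoz} is a genuine differential form, that it converges, and that the reduction operators $\delta^{(g)}({\bf x}_n)$ organize these forms into chain complexes. The convergence is the only substantive analytic point, and I would import it wholesale from the cited literature; the remaining assertions are bookkeeping about the weight factors and the reduction formulas.

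First I would address convergence. The defining expression \eqref{benderoz} is, term by term, the genus-zero form \eqref{bucharas} with an inserted string of $2g$ vertex operators for $(b_a, w_a)$ and $(\bbar_a, w_{-a})$, weighted by $\prod_{a=1}^g \rho_a^{\wt(b_a)}$ and summed over a homogeneous basis ${\bf b}_+$ of $V^{\otimes g}$; this is precisely the Schottky-sewing formula for the genus $g$, $n$-point function. Under the admissibility hypotheses on $V$ (simple, $C_2$-cofinite, quasiconformal, of strong conformal field theory type, with $V$ isomorphic to $V'$), Theorem 13.1 of \cite{G} (see also \cite{Z2}) asserts that this sewn series is absolutely and locally uniformly convergent on the sewing domain. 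I would invoke that theorem directly, having first noted that convergence of the elementary insertion on $C^n(V,0)$ was already assumed when the genus-raising operator was introduced, so that the finite compositions $\Delta^{(g)}$ are well defined as formal objects before the limit is taken.

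Next I would argue that the limit is a \emph{canonical} differential form. The genus-zero form carries the factor ${\bf dz}^{\wt({\bf v}_n)}$, so it transforms as a section of the appropriate tensor power of the canonical bundle in each insertion variable; the extra factors $\prod_{a=1}^g \rho_a^{\wt(b_a)} {\bf dw}^{\wt({\bf b}_+)}$ appearing in \eqref{benderoz} are exactly what is needed so that the sewing parameters $\rho_a$ and the auxiliary insertion points $w_{\pm a}$ enter with the correct conformal weights. The quasiconformal structure of $V$ turns the action of the group of local-coordinate changes into a torsor action, under which these combined weight factors render $\F_V^{(g)}$ independent of the chosen local coordinates; this is the sense in which the forms are canonical, and it is the content of the torsor invariance invoked at the start of the section.

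Finally, for the complex structure I would check that $\delta^{(g)}({\bf x}_{n+1}) = B^{(g)}({\bf x}_{n+1})$ maps $C^n(V,g)$ into $C^{n+1}(V,g)$ and squares to zero. Membership in the target space follows because $B^{(g)}$ is built from the vertex-operator reduction formulas of \cite{TW}, whose coefficient functions $f^{(g)}_{i,l}({\bf x}_{n+1})$ are holomorphic on the sewing domain, so that applying $B^{(g)}$ to a convergent form again yields a convergent form of one higher insertion number. The chain condition $\delta^{(g)}\, \delta^{(g)} = 0$ is the vanishing of the double sum \eqref{prodo}, which in the examples of Sections \ref{genuszero0}--\ref{apala} reduces to the Ward identities \eqref{wardy} together with Proposition~\ref{kurst}. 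The main obstacle is precisely this last point: everything else is either imported (convergence) or formal (weight bookkeeping), whereas verifying that the reduction coefficients $f^{(g)}_{i,l}$ satisfy the relations forcing \eqref{prodo} to vanish, that is, that the reduction operators genuinely close into a complex rather than merely a sequence, is where the geometric input about the Bers quasiforms and the coordinate-automorphism action must be used, and is what the later sections are devoted to confirming.
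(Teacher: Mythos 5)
Your proposal is correct and follows essentially the same route as the paper: the paper's entire justification is the paragraph preceding the proposition, which invokes Theorem 13.1 of \cite{G} (see also \cite{Z2}) for absolute and locally uniform convergence of the sewn series on the sewing domain, with canonicity and the complex structure left implicit in the construction via \eqref{benderoz} and the reduction formulas. Your additional unpacking of the weight bookkeeping, the torsor-based canonicity (which the paper only develops later, in the proof of Theorem \ref{coca}), and the deferral of the chain condition \eqref{prodo} to the Ward-identity arguments of the later sections is consistent with, and somewhat more explicit than, what the paper actually does here.
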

\subsection{The main result: the torsor structure of cohomology} 
\label{puzan}
The chain condition \eqref{prodo} for the coboundary operator 
may be solved as a functional equation in various ways. 
In the examples of the Sections \ref{genuszero0}--\ref{apala}
 we involve an ansatz  
leading to corresponding Ward identities.  
The main idea is to cut off an infinite expansion of the leading term 
of the form $1/(x-y)$ by terms compensation of $f_l(x)z^l$ terms summed 
for $u_{n+1} \in V$ states by means of the reduction formulas.  
Now we formulate the main result of this paper.  
\begin{theorem}
\label{coca}
The invariant multiple cohomology $H^n(V, g)$ of $V$  
with the reduction operators $B^{(g)}({\bf x}_n)$, $n \ge 0$ 
having the chain property for  
of families of complexes   
$\left(C^n(V, g), \delta^{(g)}({\bf x}_n) \right)$ 
considered on genus $g$ Riemann surfaces formed in the Schottky procedure, 
are equivalent to the factor space of the spaces of products of sums of 
$\mathcal L \left(f^{(g)}_{i, l}({\bf a}^{-1}_{i, n, l}.\widetilde{\bf x}_n)\right)$ of 
reduction functions  
$f^{(g)}_{i, l}({\bf a}^{-1}_{i, n, l}.\widetilde{\bf x}_n)$, 
$l \ge 0$, $1 \le i$, $j \le n$,   
with transformed arguments $\widetilde{\bf x}_n$
 according to the corresponding vertex algebra elements.    
\end{theorem}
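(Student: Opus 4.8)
The plan is to reduce Theorem \ref{coca} to the abstract statement of Theorem \ref{cocazero} and then to identify the abstract reduction operators with their concrete vertex-algebraic incarnations. First I would invoke Proposition \ref{normodor}: for an admissible vertex operator algebra $V$ the genus-raising operator \eqref{benderoz} produces genuine families of complexes $(C^n(V,g), \delta^{(g)}({\bf x}_n))$ whose spaces consist of convergent canonical differential forms, the absolute and locally uniform convergence on the sewing domain being guaranteed by Theorem 13.1 of \cite{G} (see also \cite{Z2}). I would therefore begin by specializing the general setup of Subsection \ref{mangaloid} to $\mathcal V = V$ and $\kappa = g$, so that the horizontal differentials $\delta^{(g)}({\bf x}_{n+1}) = B^{(g)}({\bf x}_{n+1})$ are exactly the reduction operators and, by the displayed formula following Theorem \ref{cocazero}, the multiple cohomology is computed as ${\rm Ker}\,\delta^{(g)}({\bf x}_{n+1})/{\rm Im}\,\delta^{(g)}({\bf x}_{n-1})$.

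Next I would substitute the explicit form $B^{(g)}({\bf x}_{n+1})=\sum_{i=1}^n \sum_{l\ge 0} f^{(g)}_{i, l}({\bf x}_{n+1}) T_{i'}(v_n(l'))$ and read off its consequences. Theorem \ref{cocazero} already identifies $H^n(V,g)$ with the factor space of products of the reduction operators; the vertex-algebraic content is that each such operator is a finite sum of insertion operators weighted by the genus $g$ reduction functions $f^{(g)}_{i,l}$. Expanding ${\rm Ker}$ and ${\rm Im}$ accordingly identifies $H^n(V,g)$ with the factor space of products of sums of the objects $\mathcal L\big(f^{(g)}_{i,l}(\,\cdot\,)\big)$ built from these functions, matching the statement once the arguments are allowed to be transformed.

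The genuinely new ingredient is the torsor structure, and here I would argue as follows. The entries $z_i$ of $x_i=(v_i,z_i)$ are local coordinates on the genus $g$ Schottky surface, and the forms \eqref{bucharas}, \eqref{benderoz} transform under the group $\mathcal G$ of local coordinate automorphisms. For each choice of transition datum $a\in\mathcal G$ I would exhibit the induced action on arguments $x_i \mapsto \widetilde{\bf x}_n$ together with the compensating action ${\bf a}^{-1}_{i,n,l}$ on the reduction functions, so that $f^{(g)}_{i,l}({\bf x}_n)$ is carried to $f^{(g)}_{i,l}({\bf a}^{-1}_{i,n,l}.\widetilde{\bf x}_n)$. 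Since the differential forms are canonical as sections, this action descends to cohomology and makes $H^n(V,g)$ a $\mathcal G$-invariant object; the family of factor spaces indexed by $a\in\mathcal G$ is then a torsor under $\mathcal G$, which is precisely the asserted equivalence.

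The hard part will be verifying that the chain condition \eqref{prodo} actually holds, i.e., that the composed reductions vanish so that the ideal sequence $\mathcal I(n)$ of Subsection \ref{mangaloid} is genuine. This is the functional equation flagged before the statement: one must cut off the infinite expansion of the leading singular term $1/(x-y)$ by compensating the $f^{(g)}_{i,l}(x)z^l$ contributions summed over intermediate states $v_{n+1}\in V$, which is exactly where the Ward identities \eqref{wardy} and Proposition \ref{kurst} enter. I expect the delicate points to be, first, justifying the interchange of the (convergent) summations against the double reduction, and, second, confirming that the coordinate automorphism action commutes with $\delta^{(g)}$, so that the torsor structure is well defined on cohomology classes rather than merely on representatives.
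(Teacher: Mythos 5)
Your proposal has the right outer skeleton (Proposition \ref{normodor} for convergence, specialization of Theorem \ref{cocazero} to $\mathcal V = V$, $\kappa = g$), but it misses the one idea the paper's proof actually turns on, and without it your step from ``factor space of products of reduction operators'' to ``factor space of products of sums of reduction functions'' does not go through. The reduction operators $B^{(g)}({\bf x}_{n+1})=\sum_{i,l} f^{(g)}_{i,l}({\bf x}_{n+1})\, T_i(v_{n+1}(l))$ are not scalar multiplications: each term carries a mode insertion $T_i(v_{n+1}(l))$ acting on the state $v_i$. Merely ``expanding ${\rm Ker}$ and ${\rm Im}$ accordingly'' leaves these operator insertions in place, so what you obtain is a quotient of products of operators, not of the scalar functions $f^{(g)}_{i,l}$ demanded by the statement. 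The paper's mechanism for eliminating the insertions is the torsor equivalence $\F_V^{(0)}(v_{n+1}.{\bf v}_n,{\bf z}_n) \sim \F_V^{(0)}({\bf v}_n,{\bf a}_n.{\bf z}_n)$, made explicit by the representation \eqref{poperator} of endomorphisms by elements of ${\rm Aut}\;\mathcal O$, namely $v_n(l).v = P(a_{i,n,l}).v$: every mode action $v_{n+1}(l).v_i$ inside a correlation form is traded for a coordinate automorphism $a_{i,n+1,l}.z_i$, so that $\delta^{(g)}({\bf x}_{n+1}).\F_V^{(g)}({\bf x}_n) = \sum_{i,l} f^{(g)}_{i,l}({\bf a}^{-1}_{n+1}.\widetilde{\bf x}_{n+1}).\F_V^{(g)}(\widetilde{\bf x}_n)$ holds with purely scalar coefficients and transformed arguments, the vertex algebra elements surviving only as parameters. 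This is what produces the spaces $\mathcal L({\bf a}^{-1}_n.\widetilde{\bf x}_n)$ and the final formula $H^n(V,g) = \mathcal L({\bf a}^{-1}_n.\widetilde{\bf x}_n)/\mathcal L({\bf a}^{-1}_{n-1}.\widetilde{\bf x}'_{n-1})$. Your ``compensating action'' paragraph runs the trade in the opposite direction --- a group element acting on coordinates, compensated inside the reduction functions --- which is an equivariance statement about $\mathcal G$-actions, not the absorption of the $T_i(v(l))$ insertions; and your conclusion that the family of factor spaces indexed by $a \in \mathcal G$ ``is a torsor under $\mathcal G$'' is not what the theorem asserts: the torsor structure is the tool (endomorphism action identified with automorphism action), not a property of the cohomology spaces themselves.

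Second, you misread the logical role of the chain condition: in Theorem \ref{coca} it is a hypothesis (``having the chain property''), and the paper's proof simply supposes \eqref{prodo} holds at the corresponding step. The Ward-identity verification you flag as ``the hard part'' is carried out in the example Sections \ref{genuszero0}--\ref{apala}, not in this proof. Planning to establish it here is not incorrect, but it diverts the argument from the genuine gap above, which is where your write-up would fail.
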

\begin{proof} 
In \cite{BZF} it was shown that the genis zero $n$-point differential form 
 \eqref{bucharas} 
is invariant with respect to the group ${\rm Aut}\; \mathcal O_n$ of 
independent transformations  
 of local variables $(z_1, \ldots, z_n)$ on a complex curve. 
As a result of application of the level-raising operator $\Delta^{(1)}(y_1)$
on \eqref{bucharas} we obtain another differential form 
containing a matrix element multiplied with $v$-part of $y=((b',w_+),(b,w_-))$ 
weighted differentials.  
Thus, the resulting differential form $\F_V^{(1)}({\bf x}_n) \in C^n(V,  1)$
 remains invariant with respect to the transformations of ${\rm Aut}\; \mathcal O_n$. 
The same argument is applicable to the result of the 
genus-raising operator $\Delta^{(g)}({\bf y}_g)$ action. 
Now let us show that the reduction operators $B^{(g)}({\bf x}_n)$
can be presented in a canonical form. 
Let us first recall the definitions of torsors and twists 
with respect to a group required for the proof. 
Let $\mathfrak G$ be a group, and $M$ a non-empty set.  
Then $M$ is called a $\mathfrak G$-torsor  
if it is equipped with a simply transitive right action of $\mathfrak G$, 
i.e., given $\eta$, $\widetilde \eta \in M$, 
there exists a unique $h \in \mathfrak G$ such that 
$\eta \cdot h = \widetilde \eta$, 
where for $h$, $\widetilde{h} \in \mathfrak G$ the right action is given by 
$\eta \cdot (h \cdot \widetilde{h}) = (\eta \cdot  h) \cdot \widetilde{h}$. 
The choice of any $\eta \in M$ allows 
us to identify $M$ with $\mathfrak G$ by sending
 $\eta  \cdot h$ to $h$.
Let $\mathfrak G$ be the group ${\rm Aut}\; \mathcal O$ of  
coordinate changes on a smooth complex curve 
$S$ generated by the transformations  
$t \mapsto \rho (t)$.       
Let $V$ be a vertex operator algebra. 
For a ${\rm Aut}\; \mathcal O$-torsor $\Xi$,    
one defines the $\Xi$-twist of $V$ as the set  
$\mathcal |V_\Xi = V \; {{}_\times \atop {}^{  {\rm Aut} \; \mathcal O}     } \Xi    
=  V \times  \Xi/  \left\{ ({\rm v}.v, \eta)  \sim  (v, a.\eta) \right\}$, 
for $\eta \in \C$, $a \in {\rm Aut}\; \mathcal O$, ${\rm v} \in {\rm End}(V)$, and $v\in V$. 
We denote by ${\it Aut}_{ p }$  the set of all 
coordinates $t_p$ on a disk $D_p$.       
It was proven in \cite{BZF} that   
the group ${\rm Aut}\; \mathcal O$   
 acts naturally on ${\it Aut}_p$,   
 and it is an ${\rm Aut}\; \mathcal O$-torsor.    
In what follows, we assume that all elements of the group 
${\rm Aut}\; \mathcal O$ are invertible. 

Recall that $B^{(g)}({\bf x}_n)$ are in general operators 
combining multiplication of an element  
$\F_V^{(g)}({\bf x}_n)\in C^n(V, \kappa)$  
by the reduction functions $f^{(g)}_{i,l}({\bf x}_n)$, 
 $1 \le i \le n$, $l \ge 0$, depending on 
a vertex operator algebra $V$ element $v_n$ 
with insertion of $v_n(l)$-mode into $i$-th position at 
$\F_V^{(g)}({\bf x}_{n-1})$.  
In \cite{BZF} the torsor structure of zero-level differential forms 
was used in order to show their canonicity. 
In that formulation,
for $V$-automorphisms of $V$ represented by the action of $v$ on ${\bf v}_i$
$1 \le i \le n$,  
  torsors are defined in terms of the equivalence  
$\F_V^{(0)}\left(v_{n+1}.{\bf v}_n, {\bf z}_n\right)  
\sim \F_V^{(0)}\left({\bf v}_n, {\bf a}_n.{\bf z}_n\right)$,   
where ${\bf a}_n$ denote corresponding automorphisms 
 of parameters ${\bf z}_n$. 
Using the definition \eqref{benderoz},  
it is easy to see that the last equivalence extends to the higher genus case. 
By applying that equivalence to \eqref{benderoz} 
we transfer the action all endomorphisms $v_i(l).$ into  
${\bf a}_n$ of reduction operators $B^{(g)}({\bf x}_j)$.  
and functions $f^{(g)}_{i, l}({\bf x}_j)$.  
Using the reduction formulas of the Subsections \ref{perdas} and \ref{systemo}
we find the multiple cohomology formulas for an admissible $V$
 considered on Riemann surfaces. 
According to the general vertex operator algebra reduction formulas mentioned above, 
 the differentials forms 
$\F_V^{(g)}({\bf x}_{n+1})$ are expanded in the unique way 
in terms of the differential forms $\F_V^{(g)}({\bf x}_n)$.  
Denote by ${\bf a}_{n+1}$ we denote the set of automorphism elements 
corresponding to actions $v_{n+1}(l).v_i$,
 ${\bf a}^{-1}_{n+1}$ their inverse elements.  
Since elements of the group of local coordinates transformation 
are assumed invertible, there exist 
$\widetilde{x}_{n+1}=({\bf v}_{n+1}, {\bf a}^{-1}_{n+1}. {\bf x}_{n+1})$, and 
\begin{eqnarray*} 
\delta^{(g)}({\bf x}_{n+1}).\F_V^{(g)}({\bf x}_n) 
= B^{(g)}({\bf x}_{n+1}).\F_V^{(g)}\left({\bf x}_n\right) 
=\sum\limits_{i=1}^n \sum\limits_{l \ge 0} 
f^{(g)}_{i, l}({\bf x}_{n+1}) \; T_i(v_{n+1}(l)). 
 \F_V^{(g)}({\bf x}_n) &&
\nn
  = \sum\limits_{i=1}^n \sum\limits_{l \ge 0} 
 f^{(g)}_{i, l}({\bf x}_{n+1}).  
 \F_V^{(g)}\left(T_i(v_{n+1}(l)).{\bf x}_n\right) 
\qquad \qquad \qquad \qquad \qquad \qquad \qquad \qquad &&
\nn
 = \sum\limits_{i=1}^n \sum\limits_{l \ge 0} 
 f^{(g)}_{i, l}({\bf x}_{n+1}).   
 \F_V^{(g)}\left(v_1, z_1;  \ldots, v_{n+1}(l).v_i, z_i;  
\ldots; v_n, z_n\right)   
\qquad \qquad \qquad &&
\nn
 =  \sum\limits_{i=1}^n \sum\limits_{l \ge 0}  
 f^{(g)}_{i, l}({\bf x}_{n+1}).   
 \F_V^{(g)}\left(v_1, z_1;  \ldots, v_i, a_{i, n+1, l}.z_i;  
\ldots; v_n, z_n\right)   \qquad \qquad \qquad &&
\nn
 = \sum\limits_{i=1}^n \sum\limits_{l \ge 0}  
 f^{(g)}_{i, l}({\bf x}_{n+1})\; T_i(a_{i, n+1, l}).   
 \F_V^{(g)}\left({\bf x}_n\right)
 = \sum\limits_{i=1}^n \sum\limits_{l \ge 0}  
 f^{(g)}_{i, l}({\bf a}^{-1}_{n+1}.\widetilde{\bf x}_{n+1}).    
 \F_V^{(g)}\left(\widetilde{\bf x}_n\right) \qquad &&
\nn
= B^{(g)}({\bf a}^{-1}_{n+1}.\widetilde{\bf x}_{n+1}). 
\F_V^{(g)}\left(\widetilde{\bf x}_n\right) 
=\delta^{(g)}({\bf a}^{-1}_{n+1}.\widetilde{\bf x}_{n+1}). 
\F_V^{(g)}\left(\widetilde{\bf x}_n\right).  \qquad \qquad &&
\end{eqnarray*}
One can see from the last formulas that the coboundary operators 
as well as the corresponding cohomology is expressible through 
a set of transformed local coordinates while vertex operator algebra elements 
play the role of parameters. 
It is easy to write $a_{i, n, l}$ in the exact form as in \cite{BZF} 
(see Appendix \ref{groupppp}). 
Indeed, the action 
of any endomorphism ${\rm v}$ can be represented as a homomorphism  
\cite{BZF, GR}, 
corresponding to an automorphism $a_{i, n, l}$
\begin{eqnarray}
\label{poperator}
 &&v_n(l).v = P\left( a_{i, n, l} \right).v =  
\exp \left( \sum\limits_{k >  0} (k+1)\; \beta_k \; L(k) \right) \beta_0^{L_W(0)}.v,   
\nn
\label{prostoryad0}
&&a_{i, n, l}.z_i 
= \exp \left(  \sum\limits_{k > -1} \beta_k\; z_i^{k+1} \partial_{z_i} \right) 
\beta_0^{z_i \partial_{z_i}}.z_i
= \sum\limits_{p \ge 1} a_{p, i, n, l}\;  z_i^p, 
\end{eqnarray}
where coefficients $a_{p, i, n, l}$ are expressed explicitly in terms of $\beta_k$. 
According to the equivalence of 
$\F_V^{(0)}\left(v_{n+1}.{\bf v}_n, {\bf z}_n\right)$ and   
 $\F_V^{(0)}\left({\bf v}_n, {\bf a}_n.{\bf z}_n\right)$,   
 the action of elements $a_{i, n+1, l}$ on formal parameters  
$z_i$, $1 \le i \le n$, corresponds to the action 
 of vertex operator algebra modes $v_{n+1}(l).$ acting on $v_i$. 
The last identity gives us an automorphism-based, coordinate independent, 
and vertex operator algebra mode parametrized 
form of the Zhu 
reduction formulas.   
In addition to that it establishes a relation between 
the coboundary operators
 $\delta^{(g)}({\bf x}_{n+1})$ acting by $T_i(v(l))$-insertions  
with the invariant form $\delta^{(g)}({\bf a}_{n+1}.{\bf x}_{n+1})$ of  
 coboundary operators acting by $T_i(a_{i, n+1, l})$-insertions.  
Note that according to the construction of differential forms \eqref{benderoz},   
for a fixed set of arguments ${\bf x}_n$, 
$\F_V^{(g)}\left({\bf x}_n\right)$ is defined uniquely 
up to a set of complex parameters ${\bf w}_{\pm g}$ not involved 
in the action of corresponding differential $\delta^{(g)}({\bf x}_n)$.  
Taking into account that the reduction formulas, i.e.,  
$\F_V^{(g)}\left({\bf x}_n\right)
= \prod_{j=n}^1 
\sum_{l_j \ge 0}
 \sum_{i_j=1}^j f^{(g)}({\bf x}_j)\; T_{i_j}(a_{i_j, l_j})$.   
$\F_V^{(g)}\left(x_0 \right)$. 
are applicable to 
any $n+1$-point differential form of the space $C^{n+1}(V, g)$, 
then, with the fixed set of arguments ${\bf x}_n$,  
all differential forms $\F_V^{(g)}\left({\bf x}_n\right)$ 
do belong to ${\rm Im}\; \delta^{(g)} ({\bf x}_{n-1})$. 
Suppose the coboundary operators $\delta^{(g)}({\bf x}_{n+1})$, $n \ge 0$,
 satisfy the chain conditions \eqref{prodo}. 
Define the space $\mathcal L({\bf a}^{-1}_n.\widetilde{\bf x}_n)$ 
$=$ $\left\{\prod_{j=n}^1 \right.$  
$\sum_{l_j \ge 0}$
 $\sum_{i_j=1}^j$ $f^{(g)}$ $\left.({\bf a}^{-1}_j.\widetilde{\bf x}_j) \right\}$  
$=$ $\left\{\prod_{j=n}^1 \right.$  
$\sum_{l \ge 0, \; 1 \le i \le n}$ 
$f^{(g)}_{i, l}$ $(a^{-1}_{i, j, l}$. $\left. \widetilde{x}_j) \right\}$ 
 of products of sums of transformed reduction functions    
$f^{(g)}_{i, l}\left(a^{-1}_{i, j, l}.\widetilde{x}_j\right)$, $1 \le i \le n$,  
 $l\ge 0$, $1 \le j \le n$. 
By adapting the last formula of the previous Section for a 
vertex operator algebra setup, we see that, since the reduction is 
performed to the level of the partition function on both sides of the 
factor space, we obtain the following expression 
for $n$-th cohomology 
$H^n(V, g)=\mathcal L({\bf a}^{-1}_n.\widetilde{\bf x}_n)/
\mathcal L({\bf a}^{-1}_{n-1}.\widetilde{\bf x}'_{n-1})$.  
\end{proof}
The form of cohomology given in Theorem \eqref{coca}
is more useful since it is expressed in terms of reduction functions
 depending on complex variables 
with vertex operator algebra elements as extra parameters.
It is important that in the case of modular invariant $n$-point functions, 
 the standard cohomology remains modular invariant. 
In examples of the Sections \ref{genuszero0}--\ref{apala} 
we see from the formulas of Lemma \ref{meerertrtwttr}, Proposition \ref{mordot} 
\eqref{mardot}, and Proposition \ref{kartavo} that   
the standard cohomology of 
families of complexes for various $g$ is modular invariant. 
 Then $H^n(V,  g)= \prod_{i=1}^n B^{(g)}(\gamma.x_i) 
/\prod_{j=1}^{n-1} B^{(g)}(\gamma.x_j)$ is modular invariant. 
Note that in many cases 
(as we will see in the examples given in the Sections \ref{genuszero0}--\ref{apala})
 the coefficient functions 
$f^{(g)}_{i, l}({\bf a}_n.{\bf x}_n)$ have 
 vertex operator algebra elements as parameters.
Nevertheless, that functions may depend on elements $u$, $u'\in V$
or their weights $\wt(u)$, $\wt(u')$ 
of $V$ components of $x_{n+1}$ and $x_n$. 
\section{Example: genus zero multiple cohomology on Riemann surfaces}
\label{genuszero0}
In the Sections \ref{genuszero0}--\ref{apala} we consider particular examples 
of vertex operator algebra cohomology considered on Riemann surfaces.
We let $\mathcal P_n$ denote the space of polynomials 
with complex coefficients of degree at most $n$.  
\subsection{The definition of $n$-point functions}
\label{logika}
The Schottky uniformization is a particular application to Riemann surfaces 
 of the general method of increasing homology level by attaching abstract loops 
to an algebraic/geometric structure. 
Consider a compact marked Riemann surface $\Sigma^{(g)}$ of genus $g$, 
e.g., \cite{FK} 
with the canonical homology basis $\alpha_a$, $\beta_a$ for $1 \le a \le g$.  
For a review of the  
construction of a genus $g$ Riemann surface $\Sigma^{(g)}$ using the Schottky 
uniformization where we sew $g$ handles to the Riemann sphere 
$\Sigma^{(0)} \cong\Chat=\C\cup \{\infty\}$, 
see \cite{Fo, Be2, TW}.  
Every Riemann surface can be Schottky uniformized in a non-unique way. 
The main thing we are going to involve is the sewing relation for $1 \le a \le g$,  
 $w_{\pm a}$, $\rho_a \in \C$,   
\begin{eqnarray}
\label{telefon}
(z'-w_{-a})(z-w_a)=\rho_a. 
\end{eqnarray}
We are using the explicit set of $n$-point functions 
and corresponding reduction formulas derived in \cite{TW}. 
These functions constitute  
the genus $0$, $1$, $g$ 
elements of $C^n(V, g)$-subspaces of the corresponding abstract families of 
complexes \eqref{buzovaish}. 
We recall here some general properties of genus zero 
$n$-point correlation functions including 
M\"obius transformation properties and the genus zero Zhu reduction \cite{Z1}.  
 We review here also generalized Ward identities
 for the genus zero $n$-point functions associated 
with any quasiprimary vector of weight $N$. 
In what follows, the  
superscript $(g)$ in $Z_V^{(g)}({\bf v,y})$ refers to the genus $g$.
Define the genus zero
 $n$-point correlation function for ${\bf v}=(v_1,\ldots, v_n)$ 
inserted at ${\bf y}=(y_1, \ldots, y_n)$, respectively,  
$Z_V^{(0)}({\bf v,y})
=Z_V^{(0)}(\ldots;v_k, y_k;\ldots)=\langle \vac,{\bf Y(v,y)}\vac
\rangle_1$,  
for bilinear pairing   $\langle$ $.$, $.$ $\rangle_1$ and 
${\bf Y(v,y)}$ $=$ $Y(v_1,y_1)$  $\ldots$  $Y (v_n,y_n)$.   
The function $Z_V^{(0)}({\bf v,y})$ can be extended to
 a rational function 
in ${\bf y}$ in the domain $|y_1|>\ldots >|y_n|$.  
We define the $n$-point correlation  
differential form for $v_k$ of weight $\wt(v_k)$,
with ${\bf dy^{\wt(v)}}=\prod_{k=1}^n dy_k^{\wt(v_k)}$, 
extend by linearity for non-homogeneous vectors
\begin{eqnarray}
\label{mendotor}
\mathcal F^{(0)}_V({\bf v,y})=Z_V^{(0)}({\bf v,y})\; {\bf dy^{\wt(v)}}.
\end{eqnarray}
\subsection{Modular property}
In \cite{TW} the following Lemma was proven. 
\begin{lemma} 
\label{meerertrtwttr}	
Let $v_k$ be quasiprimary of weight 
$\wt(v_k)$ for $k=1$, $\ldots$, $n$. Then for all 
$\gamma=\left(\begin{smallmatrix}a&b\\c&d\end{smallmatrix}\right)\in\SL_2(\C)$ we have
${\mathcal F}_V^{(0)}({\bf v,y})= 
{\mathcal F}_V^{(0)}\left(v_1,\gamma.y_1; \ldots ; v_n, \gamma.y_n \right)$.    
\hfill $\qed$  
\end{lemma}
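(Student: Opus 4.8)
The plan is to exploit that $\SL_2(\C)$ is connected and generated by the one-parameter flows of the three operators $L(-1)$, $L(0)$, $L(1)$ spanning the M\"obius subalgebra of the Virasoro algebra, so that it suffices to establish infinitesimal invariance of the form $\mathcal F_V^{(0)}({\bf v,y})$ under each generator and then integrate. The two inputs I would rely on are the covariance of vertex operators under these M\"obius vector fields and the $\slLie_2$-invariance of the vacuum, $L(-1)\vac = L(0)\vac = L(1)\vac = 0$, together with the induced left-invariance of $\langle \vac, \cdot\,\rangle_1$.

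First I would dispose of translations. From the covariance $[L(-1), Y(v, y)] = \dy\, Y(v, y)$ and $L(-1)\vac = 0$, the derivative in $b$ at $b = 0$ of $Z_V^{(0)}(\ldots; v_k, y_k + b; \ldots)$ is a total derivative that annihilates against the vacuum, giving translation invariance of $Z_V^{(0)}$; since $d(y_k + b) = dy_k$ the weighted differentials ${\bf dy}^{\wt({\bf v})}$ are untouched. Next, for dilations I would use $[L(0), Y(v, y)] = (y\,\dy + \wt(v))\, Y(v, y)$ for homogeneous $v$ together with $L(0)\vac = 0$; here the weight terms $\sum_k \wt(v_k)$ produced by the commutators are precisely compensated by the scaling $d(\lambda y_k)^{\wt(v_k)} = \lambda^{\wt(v_k)}\, dy_k^{\wt(v_k)}$ of the differentials, which is exactly why one works with the form \eqref{mendotor} rather than the bare function $Z_V^{(0)}$.

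The crucial step, and the place where the quasiprimary hypothesis is indispensable, is invariance under the special-conformal generator $L(1)$. I would use the covariance $[L(1), Y(v, y)] = (y^2\,\dy + 2\,\wt(v)\, y)\, Y(v, y) + Y(L(1)v, y)$; for $v$ quasiprimary the anomalous term $Y(L(1)v, y)$ vanishes, leaving a pure M\"obius vector field, and combined with $L(1)\vac = 0$ this produces annihilation of $Z_V^{(0)}$ by the $L(1)$-flow up to weight terms. Those terms are again absorbed by the transformation $d(\gamma.y_k)^{\wt(v_k)} = (cy_k + d)^{-2\wt(v_k)}\, dy_k^{\wt(v_k)}$ of the differentials, which follows from $(\gamma.y)' = (cy + d)^{-2}$. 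This inversion step is the main obstacle: without quasiprimarity the term $Y(L(1)v, y)$ breaks the identity, and one must also justify that the rational extension of $Z_V^{(0)}$ in ${\bf y}$ legitimizes evaluation at the transformed points $\gamma.y_k$, which may leave the original domain $|y_1| > \cdots > |y_n|$. Finally, since the three infinitesimal statements hold and $\SL_2(\C)$ is connected and generated by the corresponding flows, integrating yields the finite invariance $\mathcal F_V^{(0)}({\bf v,y}) = \mathcal F_V^{(0)}(v_1, \gamma.y_1; \ldots; v_n, \gamma.y_n)$ for every $\gamma \in \SL_2(\C)$, as claimed.
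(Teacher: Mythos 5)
Your argument is sound in outline, but it is not the argument the paper uses, because the paper does not prove Lemma \ref{meerertrtwttr} at all: the result is quoted from \cite{TW} (hence the $\qed$ attached to the statement), and within the paper's own toolkit it is an immediate specialization of Proposition \ref{mordot}, likewise quoted from \cite{TW}. That proposition records the finite M\"obius covariance for arbitrary homogeneous states, $Z_V^{(0)}({\bf v,y})= Z_V^{(0)}\left(\ldots ;e^{-c(cy_k+d)L(1)}(cy_k+d)^{-2\wt(v_k)}v_k,\gamma y_k;\ldots\right)$; when every $v_k$ is quasiprimary the operator $e^{-c(cy_k+d)L(1)}$ acts as the identity, the scalars $(cy_k+d)^{-2\wt(v_k)}$ pull out of the matrix element by linearity, and they cancel exactly against the Jacobian factors $d(\gamma.y_k)^{\wt(v_k)}=(cy_k+d)^{-2\wt(v_k)}\,dy_k^{\wt(v_k)}$, which proves the lemma in one line. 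Your route instead reproves the covariance from scratch by the infinitesimal method: $\slLie_2$-covariance of vertex operators, $L(-1)\vac=L(0)\vac=L(1)\vac=0$ together with invariance of the pairing, then integration of the three flows over the connected group $\SL_2(\C)$. This is essentially the standard proof of the finite covariance formula itself, so it is more self-contained and makes transparent exactly where quasiprimarity enters (killing the anomalous term $Y(L(1)v_k,y_k)$ in the $L(1)$ commutator); what it costs is that the two points you mention only in passing must actually be carried out in a finished write-up: (i) every identity has to be read as an identity of rational functions via the rational extension of $Z_V^{(0)}$, since the M\"obius flows leave the domain $|y_1|>\cdots>|y_n|$ --- this is harmless because both sides are rational in ${\bf y}$ and in the group parameters, so infinitesimal annihilation by the vector fields suffices; (ii) invariance under the three one-parameter flows yields invariance under all of $\SL_2(\C)$ because that group is connected and generated by those subgroups. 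Neither point is a genuine gap, but they belong inside the proof rather than in a caveat.
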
 
For quasiprimary $v_1$, $\ldots$, $v_n \in V$ we therefore find that 
${\mathcal F}_V^{(0)}({\bf v,y})$ is a genus zero 
meromorphic form in each $y_k$ of weight $\wt(y_k)$.  
In general, ${\mathcal F}_V^{(0)}({\bf v,y})$ 
is not a meromorphic form and Lemma \ref{meerertrtwttr} 
generalizes as follows \cite{TW} 
\begin{proposition}
\label{mordot}
Let $v_k$ be weight $\wt(v_k)$ for $1 \le k \le n$.  
 Then for all $\gamma=
\left(\begin{smallmatrix}a&b\\c&d\end{smallmatrix}\right)\in\SL_2(\C)$,  
$Z_V^{(0)}({\bf v,y})= Z_V^{(0)}\left(
\ldots ;e^{ -c(cy_k+d)L(1)}	
\left(cy_k+d\right)^{-2\wt(v_k)}v_k,\gamma y_k;
\ldots \right)$.  
\hfill $\qed$ 
\end{proposition}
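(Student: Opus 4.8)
The plan is to bootstrap from the quasiprimary case already established in Lemma \ref{meerertrtwttr} to arbitrary homogeneous $v_k$, using the $\mathfrak{sl}_2$-module structure carried by $V$ together with the translation (i.e.\ $L(-1)$-derivative) property of vertex operators. Since $V$ is of strong conformal field theory type, every homogeneous vector is a finite linear combination of vectors of the form $L(-1)^m u$ with $u$ quasiprimary; as $Z_V^{(0)}({\bf v,y})$ is multilinear in its $V$-arguments and the asserted correction acts slot-by-slot, it suffices to raise one slot at a time and to induct on the $L(-1)$-depth $m$ in that slot, starting from the all-quasiprimary configuration.

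First I would record the base case $m=0$. For quasiprimary $v_k$ one has $L(1)v_k=0$, so $e^{-c(cy_k+d)L(1)}v_k=v_k$ and the claimed factor collapses to $(cy_k+d)^{-2\wt(v_k)}v_k$. Rewriting the invariance of the differential form in Lemma \ref{meerertrtwttr} in terms of $Z_V^{(0)}$ produces exactly these Jacobian factors: since $d(\gamma y_k)=(cy_k+d)^{-2}\,dy_k$ and ${\bf dy}^{\wt(v)}=\prod_k dy_k^{\wt(v_k)}$, equating $\mathcal F_V^{(0)}({\bf v,y})$ with $\mathcal F_V^{(0)}(\ldots;v_k,\gamma y_k;\ldots)$ and cancelling the differentials yields $Z_V^{(0)}({\bf v,y})=Z_V^{(0)}(\ldots;(cy_k+d)^{-2\wt(v_k)}v_k,\gamma y_k;\ldots)$, which is the Proposition for quasiprimary arguments.

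For the inductive step, set $U(\gamma,y)=e^{-c(cy+d)L(1)}(cy+d)^{-2L(0)}$ and assume the identity $Z_V^{(0)}(\ldots;v_j,y_j;\ldots)=Z_V^{(0)}(\ldots;U(\gamma,y_j)v_j,\gamma y_j;\ldots)$. Differentiating both sides in $y_j$ and using $\partial_{y_j}Z_V^{(0)}(\ldots;w,y_j;\ldots)=Z_V^{(0)}(\ldots;L(-1)w,y_j;\ldots)$ reduces the step, via the chain rule and $\partial_{y_j}(\gamma y_j)=(cy_j+d)^{-2}$, to the single operator identity $\partial_{y}U(\gamma,y)=U(\gamma,y)L(-1)-(cy+d)^{-2}L(-1)\,U(\gamma,y)$. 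This in turn amounts to computing the conjugation $U(\gamma,y)L(-1)U(\gamma,y)^{-1}=(cy+d)^{-2}L(-1)-\tfrac{2c}{cy+d}L(0)+c^2L(1)$, which follows from the finite $\mathfrak{sl}_2$ relations $[L(1),L(-1)]=2L(0)$ and $[L(0),L(\pm1)]=\mp L(\pm1)$ (the Virasoro cocycle $\tfrac{c}{12}(m^3-m)$ drops out on the triple $m\in\{-1,0,1\}$). Feeding this back, the bracket $\partial_{y_j}U+(cy_j+d)^{-2}L(-1)U$ collapses to $U(\gamma,y_j)L(-1)$, so the derivative of the right-hand side reassembles into $Z_V^{(0)}(\ldots;U(\gamma,y_j)L(-1)v_j,\gamma y_j;\ldots)$, which is precisely the asserted identity with $v_j$ replaced by $L(-1)v_j$, closing the induction.

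The main obstacle is the operator identity for $\partial_yU(\gamma,y)$: one must track the non-commutation of $L(1)$ and $L(-1)$ carefully, since it is exactly the $L(1)$-term generated here that accounts for the non-quasiprimary correction $e^{-c(cy+d)L(1)}$ absent from Lemma \ref{meerertrtwttr}. A secondary point requiring care is the legitimacy of reducing to $L(-1)^m u$ generators, which relies on the strong conformal field theory type hypothesis guaranteeing the lowest-weight $\mathfrak{sl}_2$-decomposition of $V$. An alternative route would establish the global conjugation $T_\gamma Y(v,y)T_\gamma^{-1}=Y(U(\gamma,y)v,\gamma y)$ for the operator $T_\gamma$ representing $\gamma$ through the exponentiated $\mathfrak{sl}_2$ triple, by factoring $\gamma$ into translations, scalings and a special conformal transformation, and then invoking the $\SL_2(\C)$-invariance $T_\gamma\vac=\vac$ of the vacuum; I expect the inductive argument to be shorter here, since the quasiprimary case is already in hand.
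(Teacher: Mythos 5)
Your proof is correct, but note first that the paper itself contains no proof of Proposition \ref{mordot}: it is quoted verbatim from \cite{TW} (the $\qed$ is built into the statement), so the relevant comparison is with the standard argument in that reference. That argument is essentially your ``alternative route'': one exponentiates the $\mathfrak{sl}_2$ triple $\{L(-1),L(0),L(1)\}$ to an operator $T_\gamma$ representing $\gamma\in\SL_2(\C)$, proves the global conjugation formula $T_\gamma Y(v,y)T_\gamma^{-1}=Y\bigl(e^{-c(cy+d)L(1)}(cy+d)^{-2L(0)}v,\gamma y\bigr)$ by factoring $\gamma$ into translations, dilations and an inversion, and then uses vacuum invariance $T_\gamma\vac=\vac$; this yields Lemma \ref{meerertrtwttr} and Proposition \ref{mordot} simultaneously, for arbitrary vectors, with no structural hypothesis on $V$ beyond the Virasoro action. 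Your primary route is genuinely different: you take the quasiprimary Lemma \ref{meerertrtwttr} as the base case (your Jacobian bookkeeping $d(\gamma y_k)=(cy_k+d)^{-2}dy_k$ is right) and induct on $L(-1)$-depth, driven by the operator identity $\partial_y U(\gamma,y)=U(\gamma,y)L(-1)-(cy+d)^{-2}L(-1)U(\gamma,y)$ for $U(\gamma,y)=e^{-c(cy+d)L(1)}(cy+d)^{-2L(0)}$. I verified this identity: with $x=-c(cy+d)$ one has $e^{xL(1)}L(-1)e^{-xL(1)}=L(-1)+2xL(0)+x^{2}L(1)$ and $\lambda^{-2L(0)}L(-1)\lambda^{2L(0)}=\lambda^{-2}L(-1)$, giving your conjugation formula $U L(-1)U^{-1}=(cy+d)^{-2}L(-1)-\tfrac{2c}{cy+d}L(0)+c^{2}L(1)$, and a direct computation gives $\partial_yU=\bigl(c^{2}L(1)-\tfrac{2c}{cy+d}L(0)\bigr)U$, which is exactly the difference of the two sides, so the induction closes. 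What your approach buys: it reuses the lemma already on record and needs only finite $\mathfrak{sl}_2$ commutators plus the translation property, all internal to this paper's toolkit. What it costs: the reduction to quasiprimary descendants $L(-1)^m u$ is an extra input --- you should state explicitly that it holds because $V$ is simple, self-dual and of strong conformal field theory type, so the invariant form is non-degenerate, $L(1)$ and $L(-1)$ are mutually adjoint, and hence $V_{(n)}=\bigl(\ker L(1)\cap V_{(n)}\bigr)\oplus L(-1)V_{(n-1)}$; this is the same decomposition implicitly underlying Corollary \ref{ezdap}, but it is a hypothesis the conjugation proof does not need.
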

Note that Lemma \ref{meerertrtwttr} represents a particular example 
of invariance described in the Subsection \ref{puzan}  
and the Subsection \ref{groupppp} of the Appendix \ref{generererv} \cite{BZF}.   
\subsection{The genus zero Ward identity}
Let $u$ be quasiprimary of weight $N$.   
In \cite{TW} we find a general genus zero Ward identity which is  
 a genus zero analogue of\cite{Z1}, Proposition 4.3.1 in \cite{TW}.   
\begin{proposition}
\label{ponom}
Let $u$ be quasiprimary of weight $N$. 
Then for all $p\in \mathcal P_{2N-2}$ we have
\begin{eqnarray}
\label{wardy}
\sum\limits_{k=1}^n \sum\limits_{l=0}^{2N-2} 
\frac{1}{l!} \left(\partial^l_{y_k} p(y_k)\right) \;  
Z_V^{(0)}(\ldots;u(l).v_k, y_k;\ldots)= 0.
\hfill  \qed
\end{eqnarray} 
\end{proposition}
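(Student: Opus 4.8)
The plan is to deduce \eqref{wardy} from the residue theorem applied to a single auxiliary rational function on the sphere. First I would form the $(n+1)$-point object
\[
G(z)=\langle \vac,\, Y(u,z)\,Y(v_1,y_1)\cdots Y(v_n,y_n)\,\vac\rangle_1 ,
\]
which, by the rationality of the genus zero $n$-point functions recalled above, is for generic fixed $(y_1,\ldots,y_n)$ a rational function of $z$ whose only finite poles lie at $z=y_1,\ldots,y_n$.

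Next I would compute the residues of $p(z)\,G(z)\,dz$ at these finite poles. The operator product expansion $Y(u,z)Y(v_k,y_k)=\sum_{l\ge 0}Y(u(l).v_k,y_k)(z-y_k)^{-l-1}+(\mathrm{reg.})$ identifies the principal part of $G$ at $y_k$, so that the coefficient of $(z-y_k)^{-l-1}$ equals $Z_V^{(0)}(\ldots;u(l).v_k,y_k;\ldots)$. Expanding $p$ in its Taylor series about $y_k$ and extracting the coefficient of $(z-y_k)^{-1}$ gives
\[
\Res_{z=y_k}\bigl(p(z)\,G(z)\,dz\bigr)=\sum_{l\ge 0}\frac{1}{l!}\bigl(\partial^{l}_{y_k}p(y_k)\bigr)\,Z_V^{(0)}(\ldots;u(l).v_k,y_k;\ldots).
\]
Since $p\in\mathcal P_{2N-2}$, its derivatives of order exceeding $2N-2$ vanish, so this sum truncates automatically at $l=2N-2$ and reproduces exactly the $k$-th summand of \eqref{wardy}.

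The crux is the behaviour of $G$ at $z=\infty$. Writing $G(z)=\sum_{l}c_l\,z^{-l-1}$ with $c_l=\langle \vac,\, u(l).X\rangle_1$ and $X=Y(v_1,y_1)\cdots Y(v_n,y_n)\vac$, I would use the quasiprimary hypothesis $L(1)u=0$ together with $\wt(u)=N$ to invoke the adjoint relation $u(l)^{\dagger}=(-1)^N u(2N-2-l)$ for the invariant pairing $\langle .,.\rangle_1$. Transposing $u(l)$ onto the out-vacuum gives $c_l=(-1)^N\langle u(2N-2-l).\vac,\,X\rangle_1$, and the creation property $u(m).\vac=0$ for $m\ge 0$ then forces $c_l=0$ whenever $l\le 2N-2$. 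Hence $G(z)=O(z^{-2N})$ as $z\to\infty$, so for any $p\in\mathcal P_{2N-2}$ the one-form $p(z)\,G(z)\,dz=O(z^{-2})\,dz$ is holomorphic at infinity with vanishing residue there.

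Finally I would apply the residue theorem on $\Chat\cong\mathbb{P}^1$, namely $\sum_{k=1}^{n}\Res_{z=y_k}\bigl(p(z)\,G(z)\,dz\bigr)+\Res_{z=\infty}\bigl(p(z)\,G(z)\,dz\bigr)=0$. The infinity term vanishes by the previous paragraph, and inserting the residues computed at the $y_k$ yields precisely \eqref{wardy}. The main obstacle is the third step: establishing $G(z)=O(z^{-2N})$, since this is exactly where the weight-$N$ and quasiprimary hypotheses enter through the adjoint formula, and where the degree bound $\deg p\le 2N-2$ becomes forced — if $\deg p$ exceeded $2N-2$ the form $p(z)\,G(z)\,dz$ would acquire a nonzero residue at infinity and the identity would fail.
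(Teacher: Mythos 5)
Your proof is correct. The paper itself gives no internal proof of Proposition \ref{ponom} --- it states the identity with a \(\qed\) and attributes it to \cite{TW} --- and your argument is precisely the standard one behind that citation: rationality of the vacuum correlator \(G(z)\) with finite poles only at the \(y_k\), identification of the principal parts via the OPE, the decay \(G(z)=O(z^{-2N})\) at infinity forced by the quasiprimary adjoint relation \eqref{tuta} (at \(\rho=1\)) together with the creation property, and the residue theorem on \(\Chat\). Your closing observation that the bound \(\deg p\le 2N-2\) is exactly what kills the residue at infinity correctly pinpoints where the hypotheses enter and why the identity \eqref{wardy} fails beyond that degree.
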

Proposition \ref{ponom}  
is the current algebra Ward identity for $N=1$ with $u\in V_{(1)}$  
and the conformal Ward identity for $N=2$ 
for the conformal vector $u=\omega\in V_{(2)}$.  
\subsection{The genus zero Zhu recursion}
\label{perdas}
In \cite{TW} the genus zero Zhu recursion formulas were developed 
 for $Z_V^{(0)}(u, z; {\bf v,y})$ for quasiprimary 
$u$ of weight $N\ge 1$. 
The Proposition \ref{ponom} implies that 
we can take as the Zhu reduction function 
\begin{eqnarray}
\label{eschechl}
\pi^{(0)}_N(z, y)= 
\frac{1}{z-y}+\sum\limits_{l=0}^{2N-2} f_l(z)y^l , 
\end{eqnarray}
for any formal Laurent series $f_l (z)$ for $l=0$, $\ldots$, $2N-2$. 
In the case of quasiprimary genus zero Zhu recursion we have the following 
proposition \cite{TW}. 
\begin{proposition} 
\label{raisi}
Let $u$ be weight $N\ge 1$ quasiprimary. For 
$f^{(0)}_{N, j}(z,y)$ $=$ $\frac{\partial^j_y}{j!}$ $\pi^{(0)}_N(z, y)$,  
\begin{eqnarray}
\label{gordon}
Z_V^{(0)}(u, z; {\bf v,y})= 
\sum\limits_{k=1}^n\sum\limits_{j\ge 0} 
f^{(0)}_{N, j}(z, y_k)\; Z_V^{(0)}(\ldots;u(j).v_k,y_k;\ldots).  
\hfill \qed 
\end{eqnarray}
\end{proposition}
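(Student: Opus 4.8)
The plan is to regard $F(z):=Z_V^{(0)}(u,z;{\bf v,y})$, with ${\bf v}$ and ${\bf y}$ held fixed, as a rational function of the single variable $z$ on the sphere $\Chat$ and to determine it completely from its poles together with its behaviour at $z=\infty$. First I would use the rationality of genus zero correlation functions to record that $F$ is holomorphic in $z$ away from the insertion points $z=y_k$, $k=1,\ldots,n$, with singularities possible only there, and that the quasiprimary weight-$N$ hypothesis on $u$, through the covariance of Lemma \ref{meerertrtwttr} and Proposition \ref{mordot}, controls the growth of $F$ as $z\to\infty$, forcing decay there at a rate fixed by the weight $N$.

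Next I would read off the principal part of $F$ at each pole from vertex operator associativity: the operator product $Y(u,z)Y(v_k,y_k)=\sum_{j\ge 0}(z-y_k)^{-j-1}Y(u(j).v_k,y_k)+(\text{regular})$ gives, near $z=y_k$, the expansion $F(z)=\sum_{j\ge 0}(z-y_k)^{-j-1}Z_V^{(0)}(\ldots;u(j).v_k,y_k;\ldots)+O(1)$, a finite sum since $u(j).v_k=0$ for $j$ large. Since $\pi^{(0)}_N(z,y)$ carries a simple pole of residue $1$ at $z=y$, its normalised derivatives $f^{(0)}_{N,j}(z,y)=\tfrac{1}{j!}\partial_y^j\pi^{(0)}_N(z,y)$ have principal part exactly $(z-y)^{-j-1}$. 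Hence the right-hand side of \eqref{gordon} reproduces every principal part of $F$, so the difference $D(z):=F(z)-\sum_{k}\sum_{j}f^{(0)}_{N,j}(z,y_k)\,Z_V^{(0)}(\ldots;u(j).v_k,y_k;\ldots)$ is pole-free, hence a polynomial in $z$; being a polynomial that inherits the decay of $F$ at infinity, $D$ vanishes identically. This establishes \eqref{gordon} for the bare kernel $\tfrac{1}{z-y}$.

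Finally I would check that the freedom in $\pi^{(0)}_N$ does not affect the outcome: for fixed $z$ the correction $p_z(y):=\sum_{l=0}^{2N-2}f_l(z)\,y^l$ lies in $\mathcal{P}_{2N-2}$, and applying $\tfrac{1}{j!}\partial_{y_k}^j$ and summing over $k$ and $j$ produces precisely $\sum_{k}\sum_{j}\tfrac{1}{j!}\bigl(\partial_{y_k}^j p_z(y_k)\bigr)Z_V^{(0)}(\ldots;u(j).v_k,y_k;\ldots)$, which vanishes by the Ward identity \eqref{wardy} of Proposition \ref{ponom}. Thus the correction terms contribute nothing, the recursion is independent of the choice of the $f_l(z)$, and the cutoff $2N-2$ is exactly the degree at which the contour argument producing \eqref{wardy} matches the weight-$N$ decay of $F$ that makes the circle at infinity contribute no residue.

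I expect the main obstacle to be the two steps that genuinely rely on the vertex algebra structure rather than on formal bookkeeping: the exact identification of the principal parts from the operator product expansion, and the growth estimate at $z\to\infty$ that ties the quasiprimary weight $N$ both to the vanishing of the residual polynomial $D$ and to the range $0\le l\le 2N-2$ in the Ward identity. Once these are secured, the cancellation of the correction terms is an immediate consequence of Proposition \ref{ponom}.
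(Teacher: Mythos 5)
Your proof is correct and is essentially the argument behind the result: the paper itself gives no proof of Proposition \ref{raisi} (it is imported from \cite{TW}, with the \qed placed in the statement), but its surrounding remarks --- that Proposition \ref{ponom} permits an arbitrary choice of the Laurent series $f_l$ in \eqref{eschechl}, and that the right-hand side of \eqref{gordon} is independent of that choice --- encode exactly your two-step scheme. Your bare-kernel step (OPE principal parts at $z=y_k$ plus the $O(z^{-2N})$ decay at infinity forced by quasiprimarity, giving the partial-fraction identity) is Zhu's original genus-zero recursion argument, and your Ward-identity cancellation of the $\sum_{l=0}^{2N-2} f_l(z)y^l$ correction terms is precisely how \cite{TW} upgrades it to the general kernel $\pi^{(0)}_N$; the only point worth tightening is that the decay estimate should be drawn from Proposition \ref{mordot} (or the adjoint-operator computation), since Lemma \ref{meerertrtwttr} assumes all the $v_k$ quasiprimary.
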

Proposition \ref{raisi} implies Lemma 2.2.1 of \cite{Z1}
 for a particular choice of 
Laurent series $f_l$ in \eqref{eschechl}. 
We note that $\pi^{(0)}_N(z, y)$ is independent of the vertex operator algebra  
$V$ and that the right hand side  of \eqref{gordon} 
is independent of the choice of $f_l$   
due to the Ward identity of Proposition \ref{ponom}. 
Define $L^{(k)}(-1)=\frac{1}{k!}\left(L(-1)\right)^k$   
 so that $Y\left(L^{(k)}(-1) u, z\right)=\frac{1}{k!} \partial_z^k Y(u, z)$.    
In the general genus zero Zhu recursion case we then have 
\begin{corollary} 
\label{ezdap}
Let $L^{(i)}(-1)u$ be a quasiprimary descendant $u$ of $\wt(u)=N$. Then
for $f^{(0)}_{N, i, j}(z, y) =  
\frac{1}{i!} \frac{1}{j!} \partial^i_z \partial^j_y \pi^{(0)}_N (z,y)$, 
\begin{eqnarray}
\label{mudo1}
Z_V^{(0)} \left(L^{(i)}(-1)u,z;{\bf v,y}\right)
=\sum\limits_{k=1}^n \sum\limits_{j\ge 0} f^{(0)}_{N, i, j} 
(z,y_k) \; Z_V^{(0)} \left(\ldots;u(j).v_k;y_k;\ldots \right). 
\hfill \qed
\end{eqnarray} 
\end{corollary}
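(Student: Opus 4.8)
The plan is to prove Corollary \ref{ezdap} by applying the quasiprimary Zhu recursion of Proposition \ref{raisi} together with the translation property of the vertex operator $L(-1)$. First I would recall that $L^{(i)}(-1) = \frac{1}{i!}(L(-1))^i$ acts as a formal derivative through the standard identity $Y(L^{(k)}(-1)u, z) = \frac{1}{k!}\partial_z^k Y(u, z)$, which is stated in the excerpt immediately before the corollary. The key observation is that inside the correlation function, inserting $L^{(i)}(-1)u$ at position $z$ is the same as applying $\frac{1}{i!}\partial_z^i$ to the correlation function with $u$ inserted at $z$. Thus I would begin by writing
\begin{eqnarray*}
Z_V^{(0)}\left(L^{(i)}(-1)u, z; {\bf v,y}\right)
= \frac{1}{i!}\partial_z^i \; Z_V^{(0)}(u, z; {\bf v,y}),
\end{eqnarray*}
which reduces the problem to differentiating the already-established formula \eqref{gordon}.

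Next I would substitute the right-hand side of Proposition \ref{raisi} and differentiate term by term. Since $u$ is quasiprimary of weight $N$, Proposition \ref{raisi} gives $Z_V^{(0)}(u, z; {\bf v,y}) = \sum_{k=1}^n \sum_{j\ge 0} f^{(0)}_{N, j}(z, y_k)\; Z_V^{(0)}(\ldots; u(j).v_k, y_k; \ldots)$. The factors $Z_V^{(0)}(\ldots; u(j).v_k, y_k; \ldots)$ depend only on the $y_k$ and not on $z$, so the operator $\frac{1}{i!}\partial_z^i$ acts solely on the reduction functions $f^{(0)}_{N, j}(z, y_k)$. Recalling the definition $f^{(0)}_{N, j}(z, y) = \frac{\partial_y^j}{j!}\pi^{(0)}_N(z, y)$ from Proposition \ref{raisi}, applying $\frac{1}{i!}\partial_z^i$ produces exactly $\frac{1}{i!}\frac{1}{j!}\partial_z^i \partial_y^j \pi^{(0)}_N(z,y) = f^{(0)}_{N, i, j}(z, y)$, which matches the definition given in the statement of the corollary. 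This yields the claimed formula \eqref{mudo1}.

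The main subtlety — and the step I expect to require the most care — is justifying the interchange of the formal differentiation $\partial_z^i$ with the (possibly infinite) sum over $j\ge 0$ and with the rational extension of the correlation function. One must verify that differentiating \eqref{gordon} term by term is legitimate, which relies on the fact that $Z_V^{(0)}(u, z; {\bf v,y})$ extends to a rational function in the variables with poles only at coincident insertion points, together with the absolute and locally uniform convergence recalled for admissible $V$ earlier in the excerpt. Given this analyticity, term-by-term differentiation is valid on the relevant domain, and no genuinely new convergence estimate is needed beyond what is already available from \cite{TW}. The remaining verification that the mode $u(j)$ appearing on the right is unchanged — that only the coefficient function is differentiated — follows because $L(-1)$ acts as a translation generator on the inserted field and does not alter the modes $u(j).v_k$ of the other insertions.
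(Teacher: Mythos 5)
Your proposal is correct and follows exactly the route the paper intends: the corollary is stated as an immediate consequence of the translation identity $Y(L^{(i)}(-1)u,z)=\frac{1}{i!}\partial_z^i Y(u,z)$ (given just before the statement) applied to Proposition \ref{raisi}, with $\frac{1}{i!}\partial_z^i$ hitting only the reduction functions $f^{(0)}_{N,j}(z,y_k)$ since the remaining factors are $z$-independent. Your care about interchanging $\partial_z^i$ with the sum over $j$ is fine but can be dispensed with even more simply: by lower truncation, $u(j).v_k=0$ for $j\gg 0$, so the sum over $j$ is finite for each $k$.
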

Consider formal differential forms \eqref{mendotor} with
  $\Pi^{(0)}_N(z,y)=\pi^{(0)}_N(z,y)\;dz^N\;dy^{1-N}$ 
and using \eqref{genroooor} then Proposition \ref{ponom} 
and  \ref{raisi} are  
equivalent \cite{TW} to  
\begin{proposition} 
\label{hamen}
For weight $N\ge 1$  quasiprimary $u\in V$ and 
 all $p\in \mathcal P_{2N-2}$, we have 
\begin{eqnarray}
\label{septemb}
\sum\limits_{k=1}^n \sum\limits_{l=0}^{2N-2}
 \frac{1}{l!} \left( \partial^l_{y_k} p(y_k) \right) \; 
{\mathcal F}_V^{(0)}(\ldots;u(l).v_k,y_k;\ldots)\;dy_k^{l+1-N}=0, &&
\\
\label{korysto}
{\mathcal F}_V^{(0)}(u,z;{\bf v,y})=\sum\limits_{k=1}^n
\sum\limits_{j\ge 0}
\frac{1}{j!} \left( \partial^j_{y_k}  \Pi^{(0)}_N(z, y_k) \right) \; 
{\mathcal F}_V^{(0)} (\ldots;u(j).v_k,y_k;\ldots)\;dy_k^j.
 \hfill \qed &&
\end{eqnarray}
\end{proposition}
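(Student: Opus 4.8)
The plan is to recognize \eqref{septemb} and \eqref{korysto} as the scalar identities of Propositions \ref{ponom} and \ref{raisi} multiplied through by a single nonvanishing product of formal differentials, so that the asserted equivalence amounts to a bookkeeping of weights. The one structural input is the weight shift produced by a mode: in the standard convention $Y(u,z)=\sum_n u(n)z^{-n-1}$, a quasiprimary $u$ of weight $N$ gives $\wt(u(l).v_k)=\wt(v_k)+N-l-1$, so that $\wt(v_k)-\wt(u(l).v_k)=l+1-N$. I would record this identity at the outset, since it dictates every differential exponent below.

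First I would derive \eqref{septemb}. Multiplying the Ward identity \eqref{wardy} of Proposition \ref{ponom} by the fixed form $\prod_{j=1}^n dy_j^{\wt(v_j)}$, the $(k,l)$-summand becomes $\frac{1}{l!}(\partial^l_{y_k}p(y_k))\,Z_V^{(0)}(\ldots;u(l).v_k,y_k;\ldots)\prod_j dy_j^{\wt(v_j)}$. Using \eqref{mendotor} and splitting $dy_k^{\wt(v_k)}=dy_k^{\wt(u(l).v_k)}\,dy_k^{l+1-N}$ by the weight identity above, the correlator together with $dy_k^{\wt(u(l).v_k)}\prod_{j\ne k}dy_j^{\wt(v_j)}$ assembles into $\mathcal F_V^{(0)}(\ldots;u(l).v_k,y_k;\ldots)$, leaving precisely the residual factor $dy_k^{l+1-N}$ of \eqref{septemb}. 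Every summand thus carries the common differential $\prod_j dy_j^{\wt(v_j)}$, which is nonzero, so the form identity \eqref{septemb} holds if and only if the scalar identity \eqref{wardy} does.

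Next I would derive \eqref{korysto} in the same manner, now from the Zhu recursion \eqref{gordon} of Proposition \ref{raisi} multiplied by $dz^N\prod_{j=1}^n dy_j^{\wt(v_j)}$. The left side becomes $\mathcal F_V^{(0)}(u,z;{\bf v,y})$ by \eqref{mendotor} and $\wt(u)=N$. On the right I rewrite the coefficient via $f^{(0)}_{N,j}(z,y_k)=\frac{1}{j!}\partial^j_{y_k}\pi^{(0)}_N(z,y_k)$ and the definition $\Pi^{(0)}_N(z,y)=\pi^{(0)}_N(z,y)\,dz^N\,dy^{1-N}$; since $\partial_{y_k}$ acts on the scalar $\pi^{(0)}_N$ only and leaves the formal symbols $dz^N,\,dy_k^{1-N}$ fixed, one gets $\frac{1}{j!}\partial^j_{y_k}\Pi^{(0)}_N(z,y_k)=f^{(0)}_{N,j}(z,y_k)\,dz^N\,dy_k^{1-N}$. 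Collecting $dy_k$-exponents in the $k$-th term, $(1-N)+(\wt(v_k)+N-j-1)+j=\wt(v_k)$, reproduces exactly $\mathcal F_V^{(0)}(\ldots;u(j).v_k,y_k;\ldots)\,dy_k^j$, while $dz^N$ and the $dy_{j'}^{\wt(v_{j'})}$, $j'\ne k$, supply the remaining differentials of \eqref{korysto}. The sole point requiring care is precisely this interpretation of $\partial_{y_k}$ as acting only on scalar kernels and correlators with the differential symbols held formal; once that convention is fixed the exponent count is forced, and the nonvanishing common factor again upgrades implication to equivalence.
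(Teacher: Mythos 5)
Your proof is correct and follows essentially the same route as the paper: the paper obtains Proposition \ref{hamen} precisely by combining the form definition \eqref{mendotor}, the mode-weight shift \eqref{genroooor}, and the scalar identities of Propositions \ref{ponom} and \ref{raisi}, which is exactly the weight bookkeeping you carry out. Your explicit exponent counts (and the remark that $\partial_{y_k}$ acts only on scalar kernels, with the differential symbols held formal) simply flesh out what the paper, citing \cite{TW}, leaves implicit.
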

Note that the reduction function $f^{(0)}_{N, j}(z,y)$ 
does depend  
on $u$ in the form of $N$, and $j$ (since it is $j$-th derivative) 
but does not depend on ${\bf v}_n$ in this particular case.   
In comparison to the general form $f^{(0)}_{l, j}(z,y)$ we keep $N$ here 
as the index characterizing $u$ above.  
The formal residue is defined as follows 
$\Res_{z-y_k}$ $\left(z-y_k\right)^j$ ${\mathcal F}_V^{(0)}$ $(u$, $z$; ${\bf v,y})$ 
$=$ ${\mathcal F}_V^{(0)}$ $(\ldots$; $u(j).v_k$, $y_k$; $\ldots)\;$ $dy_k^{j+1-N}$.   
\subsection{The genus zero chain conditions}
For the quasiconformal genus zero case 
the general chain condition has the form \eqref{prodo} with 
$f^{(0)}_{ij}({\bf z}_n)=f^{(0)}_{ij}(z, {\bf y}_n)$. 
Since the form of the action of the coboundary operators 
$\delta^{(0)}(u, z_{n+1}; {\bf x}_n)$  
and $\delta^{(0)}(u', z'_n; {\bf x}'_{n-1})$ is fixed, then 
the accomplishment of the corresponding chain condition 
depends explicitly on $u$, $u' \in V$, 
and ${\bf x}_{n+1}$, ${\bf x}'_n$.  
There are a few ways how to make \eqref{prodo} vanishing. 
In the genus zero and $g$ examples provided in this paper, 
we exploit a one particular approach related to corresponding Ward 
identities. 
Namely, we pick $u$, $u'$ and ${\bf x}_{n+1}$, ${\bf x}'_n$ such that 
the combination $\delta^{(0)}(u, z_{n+1}; {\bf x}_n)$  
$\delta^n(u', z'_n; {\bf x}'_{n-1})\; Z_V^{(0)}({\bf v, y})$ vanish  
while $\delta^n(u', z'; {\bf x}'_{n-1}) Z_V^{(0)}({\bf v, y})$ is not necessary zero. 
Note that we may also explore the classical chain property mentioned in 
the Section \ref{mangaloid} with ${\bf x}_{n+1}={\bf x}_{n+1}$ which is also 
satisfied.  
Other options to make $\delta^{(0)}(u, z_n; {\bf x}_{n-1})$ 
 to satisfy to the chain condition
will be considered in a separate paper. 
With $u\in V$, $N>1$, $u(s+2N-1).{\bf 1}=0$, 
and $u \in V$, $N'>1$, $u'(s'+2N'-1){\bf 1}=0$, 
(note that $y_i$, $1 \le i \le n+1$ includes also $z_{n+1}$) 
we have the following 
\begin{proposition}
Let $L^{(i)}(-1)u$, be a quasiprimary descendant of $u\in V$ of $\wt(u)=N$.   
With the conditions \eqref{pendozo1} on corresponding 
domains of $y_k$, $1 \le k \le n+1$ and $y_{k'}$, $1 \le k' \le n$, 
  the coboundary operator $\delta^{(0)}({\bf x}_n)$ 
\eqref{mudo1} resulting from Corollary \ref{ezdap}   
 satisfies the chain property \eqref{prodo}.   
\end{proposition}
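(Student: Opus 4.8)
The plan is to expand the composition of the two coboundary operators explicitly by applying the general genus zero Zhu recursion of Corollary \ref{ezdap} twice, and then to recognize the resulting double sum as an instance of the genus zero Ward identity of Proposition \ref{ponom}, which forces it to vanish. Concretely, applying the inner operator $\delta^{(0)}(v_{n+1}, z_{n+1}; {\bf x}_n)$ to a form $\mathcal F^{(0)}_V({\bf v,y})$ and taking $u = v_{n+1}$ of weight $N'$, Corollary \ref{ezdap} gives $\sum_{k'=1}^n \sum_{j'\ge 0} f^{(0)}_{N', j'}(z_{n+1}, y_{k'})\, \mathcal F^{(0)}_V(\ldots; v_{n+1}(j').v_{k'}, y_{k'}; \ldots)$, an element of $C^n(V,0)$ with one fewer free argument.

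Next I would apply the outer operator $\delta^{(0)}(v_{n+2}, z_{n+2}; {\bf x}_{n+1})$ with $u = v_{n+2}$ of weight $N$. A second application of the reduction formula reproduces the double sum on the right hand side of \eqref{prodo}, namely the sum over $k$, $k'$, $j$, $j'$ of $f^{(0)}_{N,j}(z_{n+2}, y_k)\, f^{(0)}_{N', j'}(z_{n+1}, y_{k'})\, T_k(v_{n+2}(j))\, T_{k'}(v_{n+1}(j'))$ acting on the base correlation function.

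The crux is then to rewrite the $z_{n+2}$-dependence using the explicit form $f^{(0)}_{N,j}(z,y) = \frac{\partial^j_y}{j!}\pi^{(0)}_N(z,y)$ with $\pi^{(0)}_N(z,y) = \frac{1}{z-y} + \sum_{l=0}^{2N-2} f_l(z) y^l$. The conditions \eqref{pendozo1}, in particular $v_{n+2}(s+2N-1).{\bf 1}=0$ and its analogue for $v_{n+1}$, truncate the inner mode sum so that the surviving contribution in $y_k$ is of the form $\frac{1}{l!}\partial^l_{y_k} p(y_k)$ for a polynomial $p \in \mathcal P_{2N-2}$. The sum over $k$ is then precisely the left hand side of the Ward identity \eqref{wardy}, which vanishes by Proposition \ref{ponom}.

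The main obstacle will be the bookkeeping in the last step. After the first reduction the variable $z_{n+1}$ is bound into the insertion $v_{n+1}(j').v_{k'}$, so the position at which $v_{n+1}$ acts must be tracked through the index $k'$ when the second reduction is applied, and one must verify that the polynomial part generated by $\pi^{(0)}_N$ has degree at most $2N-2$ under \eqref{pendozo1} so that Proposition \ref{ponom} applies term by term. The singular piece $\frac{1}{z_{n+2}-y_k}$ must be shown not to obstruct this, which is where the compensation idea of cancelling the $1/(z-y)$ expansion against the $f_l(z)y^l$ terms enters; the remark after Proposition \ref{raisi} that the right hand side of the reduction is independent of the choice of $f_l$ guarantees that this rearrangement is legitimate.
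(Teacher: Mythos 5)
Your proposal has the right skeleton --- expand the composition by applying Corollary \ref{ezdap} twice, truncate the coefficient functions to polynomials in $y_k$, and then invoke the Ward identity of Proposition \ref{ponom} --- and this is indeed the strategy of the paper. However, there is a genuine gap at the crux. First, you misidentify what \eqref{pendozo1} is: it is \emph{not} the mode-truncation hypotheses $u(s+2N-1).{\bf 1}=0$ (those are standing assumptions stated separately, before the proposition); \eqref{pendozo1} is an explicit formula \emph{prescribing} the derivatives $\partial^{i_m}_{z_m}f_{l_m}(z_m)$ of the otherwise free Laurent series $f_l$ entering the reduction kernel $\pi^{(0)}_N(z,y)=\frac{1}{z-y}+\sum_{l=0}^{2N-2}f_l(z)y^l$. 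Second, and consequently, your claimed truncation cannot work as stated: for a single fixed weight $N$, the expansion of the pole $\frac{1}{z_{n+2}-y_k}$ on the domain $|y_k/z_{n+2}|<1$ contributes \emph{all} powers $y_k^s$, $s\ge 0$, whereas the correction terms $f_l(z)y_k^l$ only reach degree $2N-2$; no choice of $f_l$ with a single $N$ can leave behind a polynomial of degree $\le 2N-2$, so Proposition \ref{ponom} is not applicable term by term. The mode conditions $u(s+2N-1).{\bf 1}=0$ bound the sums over $j,j'$ (which are finite anyway by lower truncation), not the $y_k$-powers of the coefficient functions, so they cannot do the job you assign to them.

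The device that actually closes this gap in the paper, and which is entirely absent from your proposal, is the summation over an infinite family of quasiprimary descendants $L^{(i_m)}(-1)u$ of varying weights $N_m=m$, $m\ge 1$, with the Laurent data fixed by \eqref{pendozo1} under the identification $s=l_m-j$. After summing in $m$, the terms coming from the $f_{l_m}$-part (the $\xi$-terms, in the paper's notation) cancel the infinite tower of powers coming from the pole expansion (the $\beta$-terms) power by power, leaving a genuine polynomial in $y_k$; only then does the Ward identity \eqref{septemb}/\eqref{wardy} force \eqref{prodo} to vanish. The paper also separately rules out the complementary expansion regime $|z_m/y_k|<1$ --- this is precisely why the proposition speaks of conditions ``on corresponding domains'' of the variables --- a case analysis your proposal omits. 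Your appeal to the remark after Proposition \ref{raisi} (independence of the reduction from the choice of $f_l$) is the correct justification for the \emph{freedom} to impose \eqref{pendozo1}, but it does not by itself produce the cancellation; it only licenses the choice once the cancellation mechanism has been exhibited.
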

\begin{proof}
The action of $T_i(u(j)) \; T_i(u'(j'))$ on $Z_V^{(0)}({\bf v, y})$ 
is equivariant. 
That means that 
$T_i(u(j)) \; T_{i'}(u(j')).Z_V^{(0)}({\bf v, y})= 
Z_V^{(0)}(T_i(u(j)) \; T_i(u'(j')).{\bf v, y})=A^{ii'}_{jj'}$. 
The single action of $\delta^{(0)}(u', z'_n; {\bf x}'_{n-1})$, i.e.,  
 $\sum_{i=1}^n \sum_{j\ge 0}  
f^{(0)}_{N, i', j'}(z, y'_{k'})     
 \; T_{i'}(u'(j')).\;Z_V^{(0)}({\bf v, y})$   
is assumed to be not necessary zero.  
The quadruple summation of 
$\sum_{i=1}^{n+1}$ $\sum_{j\ge 0}$   
$\sum_{i'=1}^n$  $\sum_{j' \ge 0}$    
$f^{(0)}_{N, i, j}$ $(z, y_k)$     
$f^{(0)}_{N', i', j'}$ $(z', y'_{k'})$ $A^{ii'}_{jj'}$,  
should give polynomial coefficients  
where $A^{ii'}_{jj'}$ collects result of the action of $T$-operators. 
We consider the subspaces of $C^{n+1}$(V, 0), $C^n(V, 0)$ formed by     
functions $Z_V^{(0)}(u, z; {\bf v, z})$ and $Z_V^{(0)}$ $(u'$, $z'$; $u$, $z$;  
${\bf v, z})$ 
correspondingly, 
for some $u$, $u'\in V$ of weights $N$, $N' >1$, 
and corresponding suitable values of paramteres   
$z$, $z'$, $y_k$, $y'_{k'}$. 
Let us perform summation over $N_m \ge 1$, $m \ge 1$, 
for variating $u$, 
such that for a quasiprimary descendant $L^{(i_m)}(-1)u$ of wait $N_m$   
with $i_m$, $N_m$ and $z_m$ depending on $m$ such that     
$N_m$ is chosen in that way that the only finite number of values of $s$ 
appear in the final sum. 
Then, \eqref{prodo} results in the form of \eqref{septemb} in $y_k$.  
The differentiation of the second summand vanishes after $j=2N_m-2$. 
To simplify the representation, let us denote by $\sum= 
\sum_{N_{m \ge 1}} \sum_{N'_{m' \ge 1}}  
\sum_{k=1}^{n+1} \sum_{j \ge 0} \sum{j'\ge 0}   
\sum_{k'=1}^n$. 
For the chain property, 
 we have for $z_m\ne 0$, $|\frac{y_k}{z_m}|<1$,  
\begin{eqnarray*}
\sum
\frac{\partial^{i_m}_{z_m} \partial^j_{y_k}}{i_mj!}    
  \left( \frac{1}{z_m-y_k}+\sum\limits_{l_m=0}^{2N_m-2}  
 \frac{\partial^{i_m}_{z_m}}{i_m!}  f_{l_m}(z_m)(y_k)^{l_m} \right)   
\frac{\partial^{j'}_{z'_{m'}} \partial^{j'}_{y'_{k'}} }{j'!j'!}   
 \left(\pi^{(0)}_{N'_{m'}}(z'_{m'}, y'_{k'})\right)  
A^{i_m i_m'}_{jj'}  &&
\end{eqnarray*}
\begin{eqnarray*} 
=\sum
\left(\frac{(-1)^j }{i_m!j!(z_m-y_k)^{i_m+j+1}}  
 + \sum\limits_{l_m=0}^{2N_m-2}  
 \frac{\partial^{i_m}_{z_m}}{i_m!j!}  f_{l_m}(z_m)  l_m(l_m-1)\ldots(l_m-j+1)  
  y_k^{l_m-j} \right)  &&
\end{eqnarray*}
\begin{eqnarray*}
\frac{\partial^{i'_{m'}}_{z'_{m'}} \partial^{j'}_{y'_{k'}}}{i'_{m'}!j'!} 
\left(\pi^{(0)}_{N'_{m'}}(z'_{m'}, y'_{k'}) \right)  
A^{i_m i'_{m'}}_{jj'}  \qquad &&
\end{eqnarray*}
\begin{eqnarray*} 
=\sum
\left( \frac{(-1)^j}{i_m!j! z_m^{i_m+j+1}}\left(1 + \sum\limits_{s\ge 1}  
\frac{(i_m+j+1)(i_m+j+2)\ldots(i_m+j+s)}{s!}  
\left(\frac{y_k}{z_m} \right)^s \right) \right. &&
\end{eqnarray*}
\begin{eqnarray*}
\left. 
+ \sum\limits_{l_m=0}^{2N_m-2}   
\frac{\partial^{i_m}_{z_m}}{i_m!j!} 
  f_{l_m}(z_m) \; l_m(l_m-1)\ldots (l_m-j+1) 
 \;  y_k^{l_m-j} \right)  
\frac{ \partial^{i'_{m'}}_{z'_{m'}}  \partial^{j'}_{y'_{k'}}}{i'_{m'}!j'!}  
\left( \pi^{(0)}_{N'_{m'}}(z'_{m'}, y'_{k'}) \right) 
A^{i_mi'_{m'}}_{jj'} 
 && 
\end{eqnarray*}
\begin{eqnarray*} 
=\sum
\left( \frac{(-1)^j}{ i_m!j!  z_m^{i_m+j+1} }  
\left(1 + \sum\limits_{s\ge 1}  
\frac{(i_m+j+1)(i_m+j+2)\ldots(i_m+j+s)}{s!}  
\left(\frac{y_k}{z_m} \right)^s \right) \right. && 
\end{eqnarray*}
\begin{eqnarray*}
\left. 
+ \sum\limits_{l_m=0}^{2N_m-2}   
\frac{\partial^{i_m}_{z_m} }{i_m!j!}   
  f_{l_m}(z_m) \; l_m(l_m-1)\ldots (l_m-j+1) 
 \;  y_k^{l_m-j} \right)  &&
\end{eqnarray*}
\begin{eqnarray*}
=\sum\left( \frac{(-1)^{j'} }{ {i'_{m'}}! {j'}! z'_{m'}{}^{i'_{m'}+j'+1}}
\left(\sum\limits_{s'\ge 0}   
\frac{(i'_{m'}+j'+1)(i'_{m'}+j'+2)\ldots(i'_{m'}+j'+s')}{s'!}  
\left(\frac{y'_{k'}}{z'_{m'}} \right)^{s'} \right) \right. &&
\end{eqnarray*}
\begin{eqnarray*}
\left. 
+ \sum\limits_{l'_{m'}=0}^{2N'_{m'}-2}    
\frac{1}{i'_{m'!}} \frac{1}{j'!} \partial^{i'_{m'}}_{z'_{m'}} 
  f_{l'_{m'}}(z'_{m'}) \; l'_{m'}(l'_{m'}-1)\ldots (l'_{m'}-j'+1)  
 \;  y_{k'}{'}^{l'_{m'}-j'} \right) 
A^{i_mi'_{m'}}_{jj'}. \qquad \qquad \qquad  &&
\end{eqnarray*}
Since we differentiate fixed positive powers $l_m$ of $y_k$ in the expressions
above, no negative powers of $y_k$ appear. 
Let us introduce the notations  
(and corresponding $\alpha'$, $\beta'$, $\xi'$ with primed arguments) 
\begin{eqnarray}
   && \alpha=\alpha(i_m, j, m)= \frac{(-1)^j}{ i_m!j!  z_m^{i_m+j+1} },   
\\
\label{merda}
  && \beta=\beta\left(i_m, j, s \right)=   
\frac{(i_m+j+1)(i_m+j+2)\ldots(i_m+j+s)}{s!},   
\nn
\nonumber
  && \xi=\xi\left(i_m, j, m \right)= 
\sum\limits_{l_m=0}^{2N_m-2}   
\frac{1}{i_m!} \frac{1}{j!} \partial^{i_m}_{z_m} 
  f_{l_m}(z_m) \; l_m(l_m-1)\ldots (l_m-j+1).  
\end{eqnarray}
At this point we have several options to make \eqref{prodo} vanishing. 
The first option is to make all coefficients near powers of $y_k$ 
vanish to make \eqref{prodo} equal to zero. 
The other option is to cut off the infinite series in $y_k$ in the last 
formulas so that Ward identity \eqref{septemb} would be applicable. 
Let us start with the first option.
Combining all possible zero power terms we obtain 
 $\alpha \alpha' + \alpha \xi'|_{l'_{m'}-j'=0} + \alpha \beta \xi'|_{s+l'_{m'}-j'=0} 
 + \xi\alpha'|_{l_m-j=0} + \xi\alpha'\beta'|_{l_m-j+s'} + \xi\xi'|_{l_m-j+l'_{m'}-j'}=0$ 
Then it follows that $s=0$ and $s'=0$ which contradicts the ranges of $s$ and
 $s'$.  
The other possibility is to kill infinite powers of $y_k$
and make the expressions  
for $\delta^{(0)}({\bf x}_{n+2})$ and $\delta^{(0)}({\bf x}_{n+1})$ polynomial. 
Note that in the last expression only the second sum depends on $N_m$.  
Thus, when we sum over $N_m$, we can 
 eliminate all powers of $y_k$  
except for a finite number.   
For that purpose we consider 
an infinite sum of action of $\frac{1}{i_m!} (L(-1))^{i_m} u$,  
$\wt (u_m)=N_m=m$, $m \ge 1$. 
 Note that for each fixed $m$ we have $m$ identical terms corresponding to 
each $s$ in the sum in $\beta$-term corresponding to the power $y_k^s$. 
At the same time there are $2N_m-2+1=m-1$ different terms of the 
double sum of $\xi$-part corresponding to the powers $y_k^{l_m-j}$. 
Thus, in our setup, the $\xi$-part powers of $y_k$ kill all $\beta$-part powers
when we identify $m \beta y^s = \xi y^{l_m-j}$. 
We can choose $s_0\ge 0$, then 
with $s=l_m-j$, we have 
\begin{eqnarray}
\label{pendozo1}
  \partial^{i_m}_{z_m} f_{l_m}(z_m) = 
-\frac{(-1)^j\; m \;i_m!\; (i_m+j+1)(i_m+j+2) \ldots (i_m+l_m)} 
{(l_m-j)! \; l_m(l_m-1) \ldots (l_m-j+1) \; z_m^{l_m+i_m+s+1}}. 
\end{eqnarray}
Moreover, since in the $\xi$-term we have derivatives 
$\partial_z^{i_m} f_l(z_m)$
of the Laurent series we choose $f_l(z_m)$ such that 
derivatives multiplied by powers $y_k^{l_m-j}$ vanish for a given 
$l_m-j>n_m$. 
Thus, the total series in $y_k$ remains polynomial. 
Note that due to our construction, the coefficients of the resulting 
Laurent series $\partial^{i_m}_{z_m} f_l(z_m)$ are convergent in $z_m$. 
The same procedure should be also done for $\delta^{(0)}(u', z'_n; {\bf x}'_{n-1})$.  

Similarly,  
for the opposite case, i.e., for $y_k \ne 0$,  
 $|\frac{z_m}{y_k}|<1$,  
property we have  
\begin{eqnarray*}
\sum
\left(\frac{(-1)^j}{i_m!j!y_k^{i_m+j+1}} \left( 1 + \sum\limits_{s\ge 1}  
\frac{(i_m+j+1)(i+j+2) \ldots (i_m+j+s)}{s!}  
\left(\frac{z_m}{y_k} \right)^s \right) \right.    
\nn
\left. 
+ \sum\limits_{l_m=0}^{2N_m-2}    
 \frac{\partial^{i_m}_{z_m} f_{l_m}(z_m)}{i_m!j!}   
l_m(l_m-1)\ldots(l_m-j+1)  y_k^{l_m-j} \right)  
\frac{\partial^{i'_{m'}}_{z'_{m'}}\partial^{j'}_{y'_{k'}} }{i'_{m'}!j'!}    
\left( \pi^{(0)}_{N'_{m'}}\left(z'_{m'}, y'_{k'}\right) \right)  
A^{i'_{m'} i_m}_{j'j}. &&
\end{eqnarray*}
After summation over $N_m$, in order to compensate the 
 $y_k^{-i_m-j-1}$-term, we may 
 identify $z_m=y_k^{\frac{i_m+j+1}{s}+\alpha_m}$, $s\ne 0$, 
(note that actually $s=s_m$). 
Since it is assumed that $|\frac{z_m}{y_k}|<1$, 
we should set $0 < \alpha_m <1$. 
But $l_m-j \ge 0$, we see that with the identification 
 $\alpha_m=l_m-j$,  
 the conditions above are not possible.  
Thus, we obtain \eqref{wardy} and the chain condition \eqref{prodo} is fulfilled.  
\end{proof}
The $n$ genus zero cohomology $H_N^n(V, 0)$ 
involving quasiprimary vertex operator algebra 
states of weight $N$ 
are found according to Corollary \ref{ezdap}  
with the reduction operators given by $\pi^{(0)}(z, y)$.  
\section{Example: genus $g$ multiple cohomology on Riemann surfaces}
\label{apala}
In this Section we will use the genus  
$g$ partition functions in the Schottky sewing 
scheme with $f_l(z)$ given by \eqref{pwrepefk}.
In \cite{TW} the formal genus $g$  
 partition and $n$-point correlation functions were introduced  
for a simple vertex operator algebra $V$ 
of strong conformal field theory type with $V$ 
 isomorphic to the contragredient module $V'$. 
These functions are formally associated 
to a genus $g$ Riemann surface $\Sigma^{(g)}$ in the Schottky  
scheme of the Section \ref{vtorog}. 
We apply the approach which is a generalization of the genus two 
sewing schemes of\cite{MT2} and genus two Zhu recursion of \cite{GT}.
 The genus $g$ Zhu reduction formulas are described  
with universal coefficients given by derivatives of 
$\Psi^{(g)}_N(z,y)$ and holomorphic $N$-forms $\Theta_{N, a}^l (z)$. 
These are generalizations of the genus zero formulas  
of the Section \ref{generererv}  
and of that at genus one with elliptic 
Weierstrass function coefficients\cite{Z1}. 
\subsection{Genus $g$ $n$-point functions}
\label{publiv}
For each $1 \le a \le g$, let $\{b_a\}$  denote a homogeneous  
$V$-basis, and let $\{\bbar_a\}$ be 
the dual basis with respect to the bilinear 
pairing $\langle .,.\rangle_1$, i.e.,  
 with $\rho=1$.
  Define $b_{-a}$ for $1 \le a\le g$ by \eqref{gensher} 
with formal $\rho_a$ which is identified with a Schottky sewing parameter. 
Then $\{b_{-a}\}$ is a dual basis for the bilinear pairing 
 $\langle ., . \rangle_{\rho_a}$ with adjoint 
given by \eqref{tuta}  
for $u$ quasiprimary of weight $N$.
Let ${\bf b}_+=b_1\otimes\ldots \otimes b_g$
 denote an element of a $V^{\otimes g}$-basis. Let 
$w_a$ for $a\in\{-1, \ldots, - g, 1, \ldots, g\}$ be $2g$ formal variables. 
Consider the genus zero $2g$-point rational function 
\begin{eqnarray*}
Z_V^{(0)}({\bf b,w})&=& Z_V^{(0)}(b_1,w_1;b_{-1},w_{-1};\ldots;b_g,w_g;b_{-g},w_{-g})
\nn 
&=& Z_V^{(0)}(b_1,w_1;\bbar_1,w_{-1};\ldots;b_g,w_g;\bbar_g,w_{-g})
\prod_{a=1}^g \rho_a^{\wt(b_a)}, 
\end{eqnarray*}
for  
$({\bf b,w})$ $=$ $(b_1$, $w_1$, $b_{-1}$, $w_{-1}$,
 $\ldots$, $b_g$, $w_g$, $b_{-g}$, $w_{-g})$.   
Define the genus $g$ partition function for $({\bf w,\rho})$ $=$ $(w_1$, $w_{-1}$,
 $\rho_1$, $\ldots$, $w_g$, $w_{-g}$, $\rho_g)$ by 
\begin{eqnarray}
\label{kutuyko}
Z^{(g)}_V=Z^{(g)}_V({\bf w,\rho})
=\sum\limits_{{\bf b}_+}Z_V^{(0)}({\bf b,w}),
\end{eqnarray}
where the sum is over any basis $\{{\bf b}_{+}\}$ of $V^{\otimes g}$.
This definition was motivated by the sewing relation 
\eqref{telefon} and ideas of \cite{TW1, MT2}.   
$Z^{(g)}_V$ depends on $\rho_a$ via the dual vectors 
 ${\bf b}_{-}=b_{-1}\otimes\ldots \otimes b_{-g}$
 as in \eqref{gensher}. 
In particular, setting $\rho_a=0$ 
for some $1 \le a \le g$, $Z^{(g)}_V$ then 
degenerates to a genus $g-1$ partition function. 
We define the genus $g$ formal $n$-point function  
for $n$ vectors $v_1$, $\ldots$, $v_n\in V$    
inserted at $(y_1$, $\ldots$, $y_n)$ 
for rational genus zero $(n+2g)$-point functions
$Z_V^{(0)}$ $({\bf v,y}$; ${\bf b,w})$ 
$=$ $Z_V^{(0)}$ $(v_1$, $y_1$; $\ldots$ ; $v_n$, $y_n$; $b_{-1}$, $w_{-1}$; 
$\ldots$; $b_g$, $w_g)$ by  
\begin{eqnarray}
\label{elco}
Z_V^{(g)}({\bf v,y})=Z^{(g)}_V({\bf v,y};{\bf w,\rho})
=
\sum\limits_{{\bf b}_+}Z_V^{(0)}({\bf v,y};{\bf b,w}).  
\end{eqnarray}
 The corresponding genus $g$ formal 
$n$-point correlation differential form is introduced by 
$\F_V^{(g)}({\bf v,y}_n)=Z^{(g)}_V({\bf v,y}_n)\;{\bf dy^{\wt(v)}}$.  
With coboundary operator given by $B({\bf x}_n)$  
 one defines the $n$-th Schottky cohomology $H^n(V, g)$ 
  of the bicomplex $\left(C^n(V, g), \delta^{(g)}({\bf x}_n) \right)$   
with the spaces $C^n(V, 0)$ to be  
$H^n(V, g)=\mbox{\rm Ker} \; \widehat \delta^{(g)}({\bf x}_{n+1})/ 
\mbox{\rm Im} \; \widehat \delta^{(g)}({\bf x}_n)$. 
Using modifications of coboundary operators of this paper 
we are able to construct spectral sequences for  
vertex operator algebra complexes which
 can be used in various cohomology construction, in 
particular, on orbifolds.   
\subsection{Genus $g$ formal M\"obius invariance} 
For Schottky parameters $W_a$ and 
$p\in \mathcal P_2$, define the M\"obius generator
$\D^{p}=\sum_{a\in \{-1, \ldots, - g, 1, \ldots, g\}} p(W_a)\partial_{W_a}$. 
This can be written in terms of the 
$w_a$, $\rho_a$ parameters 
for $\frac{1}{i!}\partial^i_z p(z)$ as 
\begin{eqnarray}
\label{mardot}
\D^p=\sum\limits_{a\in \{-1, \ldots, - g, 1, \ldots, g\}} \left(  
p(w_a)\partial_{w_a}+ \partial_{w_a} p(w_a)\rho_a \partial_{\rho_a} 
+ \frac{1}{2}\partial^2_{w_a} p(w_a) \rho_a\partial_{w_{-a}} \right). 
\end{eqnarray}
The genus $g$ partition function is formally M\"obius invariant \cite{TW} 
\begin{proposition}
\label{kartavo}
$\D^p Z^{(g)}_V=0$ for all $p\in \mathcal P_2$. \hfill $\qed$ 
\end{proposition}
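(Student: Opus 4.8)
The strategy is to identify $\D^p$, applied to the defining sum \eqref{kutuyko} for $Z^{(g)}_V$, with the infinitesimal form of the genus zero M\"obius covariance of Proposition \ref{mordot} for the rational $2g$-point function $Z_V^{(0)}({\bf b,w})$. Expanding Proposition \ref{mordot} to first order about $\gamma=\left(\begin{smallmatrix}1&0\\0&1\end{smallmatrix}\right)$ along the three one-parameter subgroups of $\SL_2(\C)$ generated by $1$, $z$, $z^2$, and using linearity in $p$, yields for every $p\in\mathcal P_2$ the genus zero master identity
\begin{eqnarray*}
\sum\limits_{k\in\{\pm 1,\ldots,\pm g\}} \left( p(w_k)\partial_{w_k}
+ \left(\partial_{w_k}p(w_k)\right)\wt_k
+ \frac{1}{2}\left(\partial^2_{w_k}p(w_k)\right) L(1)_k \right) Z_V^{(0)}({\bf b,w})=0,
\end{eqnarray*}
where $\wt_k$ multiplies by the weight of the vector inserted at $w_k$ and $L(1)_k$ inserts $L(1)$ on that vector. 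The plan is to show that, after multiplication by $\prod_a\rho_a^{\wt(b_a)}$ and summation over the $V^{\otimes g}$-basis $\{{\bf b}_+\}$, the three families of terms in $\D^p$ from \eqref{mardot} reproduce precisely the three terms of this identity, so that $\D^p Z^{(g)}_V=0$ follows.

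The first family $\sum_a p(w_a)\partial_{w_a}$ acts only on the explicit $w$-dependence of $Z_V^{(0)}({\bf b,w})$ and matches the coordinate term verbatim. For the second family, the operator $\rho_a\partial_{\rho_a}$ applied to the factor $\prod_a\rho_a^{\wt(b_a)}$ returns the weight $\wt(b_a)$; summing over both signs of the index (with $\rho_{-a}$ identified with $\rho_a$ through the sewing relation \eqref{telefon}) and using $\wt(\bbar_a)=\wt(b_a)$ accounts for the weight contributions at both $w_a$ and $w_{-a}$, reproducing the middle term. These two matchings require nothing beyond translation covariance and homogeneity of the bilinear pairing.

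The essential point is the third family $\frac{1}{2}\partial^2_{w_a}p(w_a)\,\rho_a\partial_{w_{-a}}$, which must reproduce the special conformal insertions $\frac12\left(\partial^2_{w_k}p\right)L(1)_k$. By translation covariance the derivative inserts $L(-1)$ on the dual vector, $\partial_{w_{-a}}Z_V^{(0)}(\ldots;\bbar_a,w_{-a};\ldots)=Z_V^{(0)}(\ldots;L(-1)\bbar_a,w_{-a};\ldots)$. I would then invoke the adjoint relation \eqref{tuta}, which for the invariant pairing exchanges $L(-1)$ with $L(1)$, together with the resolution of the identity $\sum_{b}b\otimes\bbar$ underlying the basis sum, to obtain the operator identity
\begin{eqnarray*}
\sum\limits_{b_a} b_a\otimes L(-1)\bbar_a=\sum\limits_{b_a}\left(L(1)b_a\right)\otimes\bbar_a.
\end{eqnarray*}
The prefactor $\rho_a$ compensates the unit weight shift produced by $L(\pm 1)$ relative to the $\rho_a$-dependent dual basis $b_{-a}$ of \eqref{gensher}; after summation over $\{{\bf b}_+\}$ this converts $\rho_a\partial_{w_{-a}}$ into the insertion of $L(1)$ on $b_a$ at $w_a$, matching the remaining term.

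The main obstacle is precisely this last identification: verifying that the sewing factor $\rho_a$ and the completeness of the basis sum transport the $L(-1)$ insertion on $\bbar_a$ into an $L(1)$ insertion on $b_a$ with the correct $\rho$-weighting dictated by \eqref{gensher} and \eqref{tuta}. Here the hypotheses that $V$ be $C_2$-cofinite and of strong conformal field theory type enter, through the absolute and locally uniform convergence of the genus zero sum that justifies interchanging $\D^p$ with the summation over $\{{\bf b}_+\}$. Once this identity is in place the three matchings assemble the genus zero master identity, and summation over the basis gives $\D^p Z^{(g)}_V=0$ for all $p\in\mathcal P_2$.
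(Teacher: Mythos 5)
Your proposal is correct, and it supplies an argument where this paper gives none: Proposition \ref{kartavo} is stated with a tombstone and attributed to \cite{TW}, so the ``paper's own proof'' is just a citation. Your reconstruction --- differentiate Proposition \ref{mordot} along the one-parameter subgroups generated by $1$, $z$, $z^2$ to get the infinitesimal genus zero identity, then match the three families of terms in \eqref{mardot} against it after multiplying by $\prod_a\rho_a^{\wt(b_a)}$ and summing over $\{{\bf b}_+\}$ --- is essentially the standard argument used in the cited literature (Tuite--Welby, and the genus two antecedent in \cite{MT2}), so you have in effect reproduced the reference's route rather than found a new one. Two refinements are worth recording. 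First, your displayed identity $\sum_{b_a} b_a\otimes L(-1)\bbar_a=\sum_{b_a}(L(1)b_a)\otimes\bbar_a$ is true but is the $\rho=1$ shadow of what the matching actually requires; since $L(\pm 1)$ shifts weight by one and the pairing $\langle\cdot,\cdot\rangle_1$ only couples equal weights, the graded form you need is
\begin{equation*}
\sum_{b}\rho^{\wt(b)+1}\, b\otimes L(-1)\bbar \;=\; \sum_{b}\rho^{\wt(b)}\,\bigl(L(1)b\bigr)\otimes\bbar ,
\end{equation*}
proved weight-by-weight from $\langle L(1)u,v\rangle_1=\langle u,L(-1)v\rangle_1$ (the $\rho=1$ case of \eqref{tuta}); this is exactly where the explicit factor $\rho_a$ in the third family of \eqref{mardot} gets absorbed, as you correctly indicate in words. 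Second, your appeal to $C_2$-cofiniteness and analytic convergence to justify interchanging $\D^p$ with the basis sum is unnecessary: the proposition is a statement about the \emph{formal} partition function \eqref{kutuyko}, a formal power series in the $\rho_a$ whose coefficient at each fixed total weight is a finite sum, so the interchange is legitimate term by term with no analytic input. With those two points tightened, your proof is complete.
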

Proposition \ref{kartavo} can be generalized 
to an $n$-point formal form $\F^{(g)}_V({\bf v,y})$ for 
 $n$ vectors $v_k\in V$ of weight $\wt(v_k)$. For $p\in \mathcal P_2$ we define 
for $\D^p$ of \eqref{mardot} 
\begin{eqnarray}
\label{rabotaj}
\D^p_{{\bf y}}=\D^p+\sum\limits_{k=1}^n
\left(p(y_k)\partial_{y_k}+\wt(v_k) \partial_{y_k} p(y_k)\right). 
\end{eqnarray}
\begin{proposition}
\label{rabf}
$\D^p_{{\bf y}}\F^{(g)}_V({\bf v,y})+ 
\sum\limits_{k=1}^n \frac{1}{2}\partial^2_{y_k} p(y_k) 
\F^{(g)}_V \left(\ldots ;L(1)v_k,y_k;\ldots \right)=0$,  
for all $p\in \mathcal P_2$. 
$\D^p_{{\bf y}}\F^{(g)}_V({\bf v,y})=0$ for quasiprimary states $v_1$, $\ldots$, $v_n$.  
\hfill $\qed$  
\end{proposition}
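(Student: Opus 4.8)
The plan is to derive both identities by summing the infinitesimal genus zero M\"obius Ward identity over the $V^{\otimes g}$-basis $\{{\bf b}_+\}$, exactly as in the proof of Proposition \ref{kartavo}, but now carrying the $n$ fixed insertions $v_k$ through the computation as inert spectators. First I would linearize the covariance identity of Proposition \ref{mordot}. Choosing a one-parameter subgroup $\gamma=\left(\begin{smallmatrix}a&b\\c&d\end{smallmatrix}\right)\in\SL_2(\C)$ through the identity with velocity $p\in\mathcal P_2$, so that $\gamma y=y+\epsilon\, p(y)+O(\epsilon^2)$, one checks directly that $(cy+d)^{-2\wt(v)}=1+\epsilon\,\wt(v)\,p'(y)+O(\epsilon^2)$ and $e^{-c(cy+d)L(1)}=1+\epsilon\,\tfrac12 p''(y)L(1)+O(\epsilon^2)$. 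Differentiating the identity of Proposition \ref{mordot} at $\epsilon=0$, applied to the genus zero $(n+2g)$-point function $Z_V^{(0)}({\bf v,y};{\bf b,w})$, then yields for every $p\in\mathcal P_2$ the Ward identity
\begin{equation*}
\sum_{\star}\bigl(p\,\partial + \wt\,p'\bigr)Z_V^{(0)} + \tfrac12\sum_{\star}p''\,Z_V^{(0)}(\ldots;L(1)(\cdot),\ldots)=0,
\end{equation*}
where the sum $\sum_{\star}$ runs over all $n+2g$ insertion points $y_1,\ldots,y_n,w_{\pm1},\ldots,w_{\pm g}$, and $\wt$, $\partial$, $L(1)$ act on the coordinate and the vector sitting at the point in question.

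Next I would sum this identity over the basis $\{{\bf b}_+\}$ of $V^{\otimes g}$ according to \eqref{elco} and split $\sum_{\star}$ into the $n$ genuine insertion points $y_k$ and the $2g$ sewing points $w_{\pm a}$. The $y_k$-contributions pass through the basis sum unchanged, the $v_k$ being fixed; multiplying by the form factor ${\bf dy}^{\wt(v)}$ and using the Lie-derivative relation $\mathcal L_{p(y_k)\partial_{y_k}}\,dy_k^{\wt(v_k)}=\wt(v_k)p'(y_k)\,dy_k^{\wt(v_k)}$ turns them into exactly $\sum_{k=1}^n\bigl(p(y_k)\partial_{y_k}+\wt(v_k)p'(y_k)\bigr)\F^{(g)}_V({\bf v,y})$ together with the remainder $\sum_{k=1}^n\tfrac12 p''(y_k)\F^{(g)}_V(\ldots;L(1)v_k,\ldots)$.

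The heart of the argument, and the step I expect to be the main obstacle, is to show that the $w_{\pm a}$-contributions reassemble after the basis sum into $\D^p\F^{(g)}_V({\bf v,y})$ with $\D^p$ as in \eqref{mardot}. The coordinate derivatives supply the terms $p(w_a)\partial_{w_a}$; the weight terms $\wt(b_{\pm a})p'(w_{\pm a})$, combined with the prefactor $\rho_a^{\wt(b_a)}$ in $Z_V^{(0)}({\bf b,w})$ and the $\rho_a$-dependence of the dual vector $b_{-a}$ through \eqref{gensher}, supply the scaling terms $\partial_{w_a}p(w_a)\,\rho_a\partial_{\rho_a}$; and the $L(1)$-corrections on the sewn vectors $b_{\pm a}$, transported through the adjoint \eqref{tuta} and the sewing relation \eqref{telefon} $(z'-w_{-a})(z-w_a)=\rho_a$, supply the final terms $\tfrac12\partial^2_{w_a}p(w_a)\,\rho_a\partial_{w_{-a}}$. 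This is precisely the computation underlying Proposition \ref{kartavo}; since the $v_k$-insertions commute through the manipulation of the $w$-sector, it may be imported verbatim, the only care being the consistent use of the pairing $\langle .,.\rangle_{\rho_a}$ and its adjoint when summing over the dual basis $\{\bbar_a\}$. Collecting the $w$-sector into $\D^p\F^{(g)}_V$ and adding the $y$-sector, and recalling the definition \eqref{rabotaj} of $\D^p_{{\bf y}}$, gives
\begin{equation*}
\D^p_{{\bf y}}\F^{(g)}_V({\bf v,y}) + \sum_{k=1}^n\tfrac12\partial^2_{y_k}p(y_k)\,\F^{(g)}_V(\ldots;L(1)v_k,\ldots)=0.
\end{equation*}
Finally, when $v_1,\ldots,v_n$ are quasiprimary one has $L(1)v_k=0$ for every $k$, so the correction sum vanishes and $\D^p_{{\bf y}}\F^{(g)}_V({\bf v,y})=0$, completing both claims.
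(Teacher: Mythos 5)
Your proposal is essentially correct, but there is nothing in the paper to compare it against: the paper does not prove Proposition \ref{rabf} at all. It states the proposition with a \qed and the remark that it is ``a formal version of Proposition 5.3 (ii) of \cite{TW1}'', i.e.\ the result is imported from Tuite--Welby, just as Proposition \ref{kartavo} is. What you have written is a reconstruction of the standard argument that those references use, and its key steps check out: the linearization coefficients are right ($(cy_k+d)^{-2\wt(v_k)}=1+\epsilon\,\wt(v_k)p'(y_k)+O(\epsilon^2)$ since $(cy+d)^{-2}=\partial_y(\gamma y)$, and $c=-\tfrac12\epsilon p''+O(\epsilon^2)$ gives the $\tfrac12 p''L(1)$ correction); the basis-summed weight terms $\wt(b_a)\bigl(p'(w_a)+p'(w_{-a})\bigr)$ match the $\partial_{w_{\pm a}}p(w_{\pm a})\,\rho_a\partial_{\rho_a}$ terms of \eqref{mardot} because the only $\rho_a$-dependence of $Z_V^{(g)}$ enters through $\rho_a^{\wt(b_a)}$ via \eqref{gensher}; and the $L(1)$-insertions at $w_{\pm a}$ reassemble into $\tfrac12\partial^2_{w_{\pm a}}p(w_{\pm a})\,\rho_a\partial_{w_{\mp a}}$. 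One small imprecision: in that last step the sewing relation \eqref{telefon} plays no role; what is actually used is the transpose property of the dual-basis kernel $\sum_b b\otimes b_{-a}$ with respect to $\langle\cdot,\cdot\rangle_{\rho_a}$, namely $L^\dagger_{\rho_a}(1)=\rho_a L(-1)$ (the Virasoro case of \eqref{tuta}), combined with the translation axiom $Y(L(-1)v,z)=\partial_z Y(v,z)$, which converts the basis-summed $L(1)b_a$ insertion into $\rho_a\partial_{w_{-a}}Z_V^{(g)}$ and symmetrically for $b_{-a}$. With that correction, and the routine caveat that the $\epsilon$-differentiation is exchanged with the formal basis sum coefficientwise in $\rho_a$, your argument is a complete and correct proof of both claims, including the quasiprimary specialization $L(1)v_k=0$.
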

It is a formal version of Proposition 5.3 (ii)  
of \cite{TW1} concerning  
 genus $g$ meromorphic forms in $n$ variables.
\subsection{The genus $g$ Ward identity}
\label{bendas}
For quasiprimary $u\in V$ of weight $N$, and $n$ vectors
 $v_k \in V$ of weight $\wt(v_k)$ for $1 \le k \le n$, we consider    
$\F_V(u, z; {\bf v,y})=Z_V(u, z; {\bf v,y})\;dz^N\;{\bf dy^{\wt(v)}}$. 
Define the formal residue \cite{K, FHL, LL}
\begin{eqnarray}
\label{vozmoh}
\Res_a^l \F^{(g)}_V=\Res_a^l \F^{(g)}_V(u;{\bf v,y})= 
\Res_{z-w_a}\left(z-w_a\right)^l \F^{(g)}_V(u, z; {\bf v,y}) 
\\
\notag
= \sum\limits_{{\bf b}_+}Z_V^{(0)}
(\ldots;u(l).b_a,w_a;\ldots)\;{\bf dy^{\wt(v)}},  
\end{eqnarray}
for $0 \le l\le 2N-2$, and $a\in \{-1, \ldots, - g, 1, \ldots, g\}$.  
 Equation \eqref{vozmoh}
 follows from vertex operator algebra  
 locality and  associativity. Equation \eqref{septemb}
 implies a general Ward identity for 
genus $g$ correlation functions \cite{TW}.   
\begin{proposition} 
\label{kurst}
Let $u\in V$ be quasiprimary of weight $N$. 
Then for $p_a^l $ of \eqref{ereroitf}, 
and $P^{(l)}(y_0)$ acting on the dummy variable $y_0$,  
\begin{eqnarray*}
\sum\limits_{a=1}^g  \sum\limits_{l=0}^{2N-2}  p_a^l    
\Res_a^l \F^{(g)}_V(u;{\bf v,y})
=\sum\limits_{k=1}^n \sum\limits_{l=0}^{2N-2} \frac{\partial^l_{y_k} \left(p(y_k) \right) }{l!} 
\F^{(g)}_V(\ldots; u(l).v_k, y_k;\ldots) dy_k^{l+1-N}. 
&&
\end{eqnarray*}
\end{proposition}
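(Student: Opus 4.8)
The plan is to obtain the genus $g$ Ward identity as the image, under the sewing sum \eqref{elco}, of the genus zero Ward identity of Proposition \ref{ponom}. First I would fix a quasiprimary $u$ of weight $N$ and a polynomial $p\in\mathcal P_{2N-2}$, and apply \eqref{wardy} to the rational $(n+2g)$-point function $Z_V^{(0)}({\bf v,y};{\bf b,w})$ appearing in \eqref{elco}. Here one reads this function as a genus zero correlator with $n+2g$ insertions: the $n$ external points $y_1,\ldots,y_n$ carrying $v_1,\ldots,v_n$, and the $2g$ internal sewing points $w_{\pm 1},\ldots,w_{\pm g}$ carrying the basis vectors $b_a$, $\bbar_a$. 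The genus zero identity then splits the single insertion sum into an external contribution over the $y_k$ and an internal contribution over the $w_a$, each weighted by $\frac{1}{l!}\partial^l p$ evaluated at the relevant point.

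Next I would sum over any basis $\{{\bf b}_+\}$ of $V^{\otimes g}$. The absolute and locally uniform convergence on the Schottky sewing domain, established in \cite{G} and recorded in Proposition \ref{normodor}, justifies interchanging the basis sum with the finite sums over $k$, $a$ and $l$. By the definition \eqref{elco} the external contributions assemble into the genus $g$ functions $Z_V^{(g)}(\ldots;u(l).v_k,y_k;\ldots)$, while the internal contributions collapse, by the residue formula \eqref{vozmoh}, into the terms $\Res_a^l\F^{(g)}_V$; the adjoint structure \eqref{tuta} for quasiprimary $u$ guarantees that the action $u(l).b_a$ at an internal leg is exactly the one encoded in \eqref{vozmoh}, so no correction terms appear.

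Passing from correlation functions to correlation differential forms then multiplies each term by the appropriate weighted product of differentials, reproducing the factors $dz^N$, ${\bf dy^{\wt(v)}}$ and the exponents $l+1-N$ that occur in the statement; this is precisely the form-valued step \eqref{septemb} performed simultaneously at the $2g$ internal and the $n$ external legs. Finally I would transpose the internal residue sum to the left-hand side and identify the polynomial coefficients $\frac{1}{l!}\partial^l_{w_a}p(w_a)$, evaluated at the sewing points $w_a$ and repackaged through the dummy-variable operator $P^{(l)}(y_0)$, with the quantities $p_a^l$ of \eqref{ereroitf}; the remaining external sum is then exactly the right-hand side of the proposition.

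I expect the principal obstacle to lie in the bookkeeping of the differential form weights and the precise matching of $p_a^l$. One must verify that the transition from the scalar $Z_V^{(0)}$ to the forms $\F^{(g)}_V$ distributes the factors $dw_a^{l+1-N}$ at the internal legs exactly as the residue definition \eqref{vozmoh} demands, and that the derivatives of $p$ at the sewing points $w_a$, subject to the sewing relation \eqref{telefon}, reproduce the coefficients $p_a^l$ of \eqref{ereroitf} with no stray signs or factorials. The convergence interchange, though essential, is supplied by the cited results and should be routine.
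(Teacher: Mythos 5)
Your proposal is correct and follows essentially the route the paper indicates: the paper obtains Proposition \ref{kurst} by applying the genus zero Ward identity \eqref{septemb} to the genus zero $(n+2g)$-point functions in the sewing sum \eqref{elco}, with the internal insertions at the sewing points $w_{\pm a}$ assembled into the residue terms \eqref{vozmoh}, and the coefficients $p_a^l$ of \eqref{ereroitf} arising from combining the $w_a$-leg contribution with the $w_{-a}$-leg contribution transferred back via the adjoint relation \eqref{tuta}. Your handling of the basis-sum interchange (justified by the convergence results of \cite{G}) and your flagged bookkeeping of the two-leg matching of $p_a^l$ are exactly the content of the argument in \cite{TW} that the paper cites.
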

With  
 $\frac{1}{l!}\partial^l P(y_0)= -\sum_{a=1}^g \Res_a^l $, 
$\frac{1}{l!}\partial^l P(y_k)=\frac{1}{l!}\partial^l_{y_k} p(y_k)$,
 it is easy to see that 
the identity above can be represented as
 $\sum_{k=0}^n$ $\frac{1}{l!}$ $\partial^l P (y_k)$   
$\sum_{l=0}^{2N-2}$   
$\F^{(g)}_V$ $(\ldots$;  $u(l).v_k$, $y_k$; $\ldots)$ \; $dy_k^{l+1-N}=0$. 
\subsection{The genus $g$ Zhu recursion} 
\label{systemo} 
We now recall genus $g$ Zhu reduction formulas
 generalizing Propositions  
\ref{raisi} and \ref{hamen}.  
We define $\Pi^{(g)}_N(z,y)=\pi^{(g)}_N(z, y)\;dz^N \;dy^{1-N}$ 
for $f^{(g)}_{N, i, j}(z,y)=\pi^{(g)}_N(z,y)$
 of \eqref{eschechl}  
 to be that 
determined by $\Psi^{(g)}_N(z,y)$ 
  of \eqref{erpoerksr}, $z$, $y\in \Omega_0 (\Gamma)$, 
$\Psi^{(g)}_N(z,y)=
\sum_{\gamma\in\Gamma} \Pi^{(g)}_N(\gamma z,y)$,  
$\Pi^{(g)}_N(z,y)=\Pi^{(g)}_N(z, y; {\bf A}_{2N-2})$,    
with $f_l (z)$ of \eqref{pwrepefk}. 
With that convergence of the coefficient  
functions appearing in the genus $g$ Zhu  
reduction in terms of derivatives of $\Psi^{(g)}_N(z,y_k)$  
 and the $N$-form spanning set $\{\Theta_{N,a}^l (z)\}$ is provided. 
In particular, we use the sewing formulas 
of Theorem \ref{palre}  
for $\Psi^{(g)}_N(z,y)$ and the formula of Proposition \ref{paraskas}    
for $\Theta_{N,a}^l(z)$. 
In \cite{TW} the following theorem for the case of
 quasiprimary genus $g$ Zhu recursion was proven. 
\begin{theorem}
\label{mardato}
 Let $V$ be a simple vertex operator algebra  of strong 
conformal field theory-type with $V$ isomorphic to $V'$.
The genus $g$ correlation  differential form for quasiprimary
 $u$ of weight $N\ge 1$ inserted 
at $z$ and  $v_1$, $\ldots$, $v_n\in V$ 
inserted at $y_1$,$\ldots$, $y_n$ respectively,  
satisfies the reduction identity  
\begin{eqnarray}
\notag
\label{rpoerpokf}
 \F^{(g)}_V(u, z; {\bf v,y})=\sum\limits_{a=1}^g \sum\limits_{l=0}^{2N-2}  
\Theta_{N,a}^l (z)\Res_a^l  \F^{(g)}_V(u;{\bf v,y}) 
\qquad \qquad \qquad && 
\\
+\sum\limits_{k=1}^n\sum\limits_{j\ge 0}
\frac{1}{j!} \partial^j_{y_k} 
\left( \Psi^{(g)}_N(z, y_k) \right) 
\F^{(g)}_V(\ldots;u(j).v_k,y_k; \ldots)\;dy_k^j. \hfill \qed &&
\end{eqnarray}
\end{theorem}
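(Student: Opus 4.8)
The plan is to derive the genus $g$ identity directly from the Schottky definition \eqref{elco} by applying the genus zero Zhu recursion to the enlarged correlator in which the sewing insertions are treated on the same footing as the physical ones. First I would write the left-hand side as
\begin{eqnarray*}
\F^{(g)}_V(u,z;{\bf v,y})=\sum_{{\bf b}_+} Z_V^{(0)}(u,z;{\bf v,y};{\bf b,w})\;dz^N\,{\bf dy^{\wt(v)}},
\end{eqnarray*}
a sum over a $V^{\otimes g}$-basis of genus zero $(n+2g)$-point functions, and then apply Proposition \ref{raisi} to the $u$-insertion at $z$. Since the genus zero recursion reduces against \emph{every} remaining insertion point, this produces two families of terms: those attached to the physical points $y_k$, carrying the coefficients $f^{(0)}_{N,j}(z,y_k)=\frac{1}{j!}\partial^j_{y_k}\pi^{(0)}_N(z,y_k)$, and those attached to the $2g$ sewing points $w_{\pm a}$, carrying coefficients $f^{(0)}_{N,j}(z,w_{\pm a})$.

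The physical terms are immediate: summing $\sum_{{\bf b}_+} Z_V^{(0)}(\ldots;u(j).v_k,y_k;\ldots;{\bf b,w})$ over the basis reconstructs $\F^{(g)}_V(\ldots;u(j).v_k,y_k;\ldots)$ by \eqref{elco}, so these assemble into the second sum of \eqref{rpoerpokf} once the naive genus zero kernel $\pi^{(0)}_N$ is replaced by its genus $g$ counterpart $\Psi^{(g)}_N$. The substance of the argument is therefore to show that the sewing terms promote $\pi^{(0)}_N(z,y_k)$ to the $\Gamma$-averaged kernel $\Psi^{(g)}_N(z,y_k)=\sum_{\gamma\in\Gamma}\Pi^{(g)}_N(\gamma z,y_k)$ and to isolate the residual holomorphic $N$-form contributions. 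Here I would exploit the sewing relation \eqref{telefon} together with the self-similar structure of the basis sum: inserting $u(l).b_a$ at $w_a$ and resumming over $\{b_a\}$ against the dual basis $\{b_{-a}\}$ effectively transports the argument $z$ by the Schottky generator associated with the handle $a$. Iterating this once per handle and summing the resulting cascade over all reduced words in $\Gamma$ is precisely the Poincar\'e series defining $\Psi^{(g)}_N$, while the finitely many non-propagating pieces, those indexed by $0\le l\le 2N-2$ that cannot be absorbed by a further reduction, collect into $\sum_{a=1}^g\sum_{l=0}^{2N-2}\Theta_{N,a}^l(z)\,\Res_a^l\F^{(g)}_V(u;{\bf v,y})$, with the residues identified through \eqref{vozmoh}.

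To make this bookkeeping canonical I would invoke the genus $g$ Ward identity of Proposition \ref{kurst}, which guarantees that the splitting into a $\Psi^{(g)}_N$-part and a $\Theta_{N,a}^l$-part is independent of the arbitrary Laurent tail $\sum_l f_l(z)y^l$ in \eqref{eschechl}; this is the higher-genus analogue of the remark following Proposition \ref{raisi} that the reduction is insensitive to the choice of the $f_l$. The main obstacle is analytic rather than combinatorial: one must show that the cascade of sewing insertions actually converges to the full $\Gamma$-average and that the term-by-term resummation is legitimate. For this I would appeal to the absolute and locally uniform convergence on the Schottky sewing domain established in Theorem 13.1 of \cite{G} (see also \cite{Z2}), which rests on the $C_2$-cofiniteness and strong conformal field theory-type hypotheses on $V$; granting this, the interchange of the basis sum with the genus zero reduction is justified and the two descriptions of the kernel agree, yielding \eqref{rpoerpokf}.
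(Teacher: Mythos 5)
You should know at the outset that the paper itself contains no proof of Theorem \ref{mardato}: it is stated with a closing tombstone and explicitly attributed to \cite{TW} (``In \cite{TW} the following theorem for the case of quasiprimary genus $g$ Zhu recursion was proven''), so there is no internal argument to compare yours against; the relevant comparison is with the strategy of the cited work, which your sketch does reconstruct in broad outline. Applying the genus zero recursion of Proposition \ref{raisi} to the $(n+2g)$-point functions in \eqref{elco}, letting the terms attached to the physical points $y_k$ assemble into the second sum, and recognizing that the terms attached to the sewing points $w_{\pm a}$ must be resummed into $\Psi^{(g)}_N$ plus finitely many holomorphic $N$-form contributions is indeed how the genus $g$ recursion is obtained (compare the genus two case \cite{GT}).

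However, the two steps you treat as bookkeeping are where the actual content lies, and one of them hides a genuine gap. First, the ``transport of $z$ by a Schottky generator'' does not follow merely from resumming against the dual basis: it requires invariance of the pairing together with the adjoint relation \eqref{tuta}, $u^\dagger_{\rho_a}(m)=(-1)^N\rho_a^{m+1-N}u(2N-2-m)$, which is what converts a mode $u(j)$ acting on $b_a$ at $w_a$ into a mode acting on the dual vector at $w_{-a}$ and thereby, via the sewing relation \eqref{telefon}, produces a correlator with $u$ effectively inserted at $\gamma_a z$; the modes in the window $0\le l\le 2N-2$ are precisely the ones this relation cannot propagate, which is why they survive as the $\Theta_{N,a}^l(z)\Res_a^l$ terms. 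Without \eqref{tuta} your phrase ``effectively transports the argument'' has no precise content. Second, your ``cascade over all reduced words in $\Gamma$'' is organized in \cite{TW, TW1} as an infinite linear system whose solution is the geometric series $(I-\widetilde{A})^{-1}=\sum_{k\geq 0}\widetilde{A}^k$ of Theorem \ref{palre} and Proposition \ref{paraskas}, and the analytic input needed is convergence of that series on the Schottky domain, established in \cite{TW1}; Gui's theorem \cite{G} gives convergence of the sewn correlation functions themselves but does not by itself legitimize the term-by-term rearrangement of an infinite iteration. Relatedly, identifying the resummed kernel with the Poincar\'e series \eqref{olovodo} is only available for $N\ge 2$ (the Bers quasiform is defined for $g\ge 2$, $N\ge 2$), whereas the theorem is claimed for $N\ge 1$; the matrix form of Theorem \ref{palre} is stated for all $N\ge 1$ exactly to cover this case, so a proof that routes everything through the naive $\Gamma$-sum breaks down at $N=1$.
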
 
 With $f^{(g)}_{N, 0, 0, j}= 
\sum_{a=1}^g \sum_{l=0}^{2N-2}   
\Theta_{N,a}^l (z)\Res_a^l$ and 
 $f^{(g)}_{N, k, 0, j}(z, y_k)=
\frac{1}{j!} \partial^j_{y_k} \Psi^{(g)}_N(z, y_k)$,  
\eqref{rpoerpokf} becomes 
 $\F^{(g)}_V(u, z; {\bf v,y}) =  
\sum_{i=0}^n\sum_{j\ge 0} f^{(g)}_{N, 0, 0, j}(z, y_i) \; 
\F^{(g)}_V(\ldots;u(j).v_i, y_i; \ldots)\;dy_i^j$.   
Similar to Corollary \ref{ezdap}, 
 one may generalize Theorem \ref{mardato}.  
\begin{corollary} 
\label{proepttm}
The genus $g$ formal $n$-point  differential
  for a quasiprimary  descendant $L^{(i)}(-1)u$ for 
$u$, $\wt (u)=N$, inserted at $z$, and general vectors 
$v_1$, $\ldots$, $v_n$ inserted at $y_1$,   
$\ldots$, $y_n$ respectively, satisfies the recursive identity
\begin{eqnarray}
\label{murdoy}
\F^{(g)}_V \left(L^{(i)}(-1)u, z; {\bf v,y}\right)=
\sum\limits_{a=1}^g\sum\limits_{l=0}^{2N-2} \partial_z^{(i)} 
\left( \Theta_{N,a}^l (z) \right) \Res_a^l \F^{(g)}_V(u;{\bf v,y})
 \qquad \qquad &&
\\
\notag 
+\sum\limits_{k=0}^n \sum\limits_{j\ge 0} 
\frac{\partial^i_z \partial^j_{y_k}}{i!j!}    
\left(\Psi^{(g)}_N(z, y_k) \right) 
\F^{(g)}_V(\ldots; u(j).v_k,y_k; \ldots)\;dz^i \;dy_k^j. \hfill \qed && 
\end{eqnarray}
\end{corollary}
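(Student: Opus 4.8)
The plan is to obtain \eqref{murdoy} from the quasiprimary genus $g$ reduction of Theorem \ref{mardato} by the same mechanism that produces Corollary \ref{ezdap} from Proposition \ref{raisi} at genus zero. The one structural ingredient I would use is the translation identity $Y\left(L^{(i)}(-1)u,z\right)=\frac{1}{i!}\partial_z^i\,Y(u,z)$ recorded just before Corollary \ref{ezdap}. Substituting this into the genus $g$ $n$-point function and noting that the remaining operators $Y(v_k,y_k)$, together with the Schottky insertions at the points ${\bf w}_{\pm g}$, carry no $z$-dependence, I obtain at the level of scalar correlators the relation $Z_V^{(g)}\left(L^{(i)}(-1)u,z;{\bf v,y}\right)=\frac{1}{i!}\partial_z^i\,Z_V^{(g)}(u,z;{\bf v,y})$; correspondingly, at the level of differential forms, $\F^{(g)}_V\left(L^{(i)}(-1)u,z;{\bf v,y}\right)$ is gotten by applying $\frac{1}{i!}\partial_z^i$ to the form $\F^{(g)}_V(u,z;{\bf v,y})$ on the left of \eqref{rpoerpokf}.

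First I would apply $\frac{1}{i!}\partial_z^i$ to both sides of \eqref{rpoerpokf}. On the right-hand side the only $z$-dependence lives in the two universal coefficient functions: the holomorphic $N$-form $\Theta_{N,a}^l(z)$ multiplies the residue $\Res_a^l\F^{(g)}_V(u;{\bf v,y})$, which is $z$-free since the residue at $w_a$ has already been taken, and the propagator $\Psi^{(g)}_N(z,y_k)$ multiplies the reduced form $\F^{(g)}_V(\ldots;u(j).v_k,y_k;\ldots)$, again $z$-free. Hence $\frac{1}{i!}\partial_z^i$ falls onto the coefficients alone, yielding $\partial_z^{(i)}\bigl(\Theta_{N,a}^l(z)\bigr)$ in the first family and $\frac{\partial_z^i\partial_{y_k}^j}{i!\,j!}\bigl(\Psi^{(g)}_N(z,y_k)\bigr)$ in the second, which is exactly \eqref{murdoy}; here $\partial_z^{(i)}=\frac{1}{i!}\partial_z^i$, matching the convention $L^{(i)}(-1)=\frac{1}{i!}(L(-1))^i$. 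The weight bookkeeping is automatically consistent: since $L(-1)$ raises weight by one, $L^{(i)}(-1)u$ has weight $N+i$, so the left side carries the factor $dz^{N+i}$, which is reproduced on the right by differentiating the $N$-form $\Theta_{N,a}^l$ up to an $(N+i)$-form and by the explicit $dz^i$ sitting beside $\Psi^{(g)}_N(z,y_k)$.

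The hard part will be justifying that $\partial_z^i$ may be carried inside the two infinite summations concealed in the genus $g$ objects, namely the Schottky group sum $\Psi^{(g)}_N(z,y)=\sum_{\gamma\in\Gamma}\Pi^{(g)}_N(\gamma z,y)$ and the $V^{\otimes g}$-basis sum defining $\F^{(g)}_V$ through \eqref{benderoz} and \eqref{elco}. For this I would appeal to the absolute and locally uniform convergence of these series on the sewing domain established in \cite{TW}, together with the convergence statement of Theorem 13.1 of \cite{G} invoked earlier, which licenses term-by-term differentiation in $z$; granted this, $\frac{1}{i!}\partial_z^i$ commutes with both sums and acts precisely as claimed, with no loss of convergence. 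A secondary check is that the differentiated coefficients $\partial_z^{(i)}\Theta_{N,a}^l$ and $\frac{\partial_z^i\partial_{y_k}^j}{i!\,j!}\Psi^{(g)}_N$ are again well-defined $(N+i)$-forms in $z$, so that the right-hand side of \eqref{murdoy} remains a legitimate reduction of the $(n+1)$-point form on the left.
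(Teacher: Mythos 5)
Your proposal is correct and is essentially the paper's own (implicit) argument: the paper obtains Corollary \ref{proepttm} from Theorem \ref{mardato} exactly as Corollary \ref{ezdap} follows from Proposition \ref{raisi}, namely via the translation property $Y\left(L^{(i)}(-1)u,z\right)=\frac{1}{i!}\partial_z^i Y(u,z)$ and applying $\frac{1}{i!}\partial_z^i$ to \eqref{rpoerpokf}, where the only $z$-dependence on the right sits in the universal coefficients $\Theta_{N,a}^l(z)$ and $\Psi^{(g)}_N(z,y_k)$. Your extra care in justifying the interchange of $\partial_z^i$ with the Schottky-group and $V^{\otimes g}$ sums via the convergence results of \cite{TW} and \cite{G} is consistent with, and slightly more explicit than, the paper's treatment.
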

With dummy $y_0$ and 
$f^{(g)}_{N, 0, i, j}$ $(z, y_0)$ 
$=$ $\frac{1}{i!}$ $\partial^i_z$ $\frac{1}{j!}$ $\partial^j_{y_k}$   
$\Psi^{(g)}_N$ $(z, y_k)$ $=$     
$\sum_{a=1}^g$ $\sum_{l=0}^{2N-2}$ $\partial_z^{(i)}$  
$\Theta_{N,a}^l (z)$ $\Res_a^l.$, 
 $f^{(g)}_{N, 0, i, j}$ $(z, y_k)$ $=$ 
$\frac{1}{i!}$ $\partial^i_z$ $\frac{1}{j!}$ $\partial^j_{y_k}$ $\Psi^{(g)}_N(z,y_k)$  
 \eqref{murdoy} turns into the form of one-sum reduction operator.    
$\F^{(g)}_V \left(L^{(i)}(-1)u \right.$, $z$ ; $\left.{\bf v,y}\right)$ 
$=$ $\sum_{k=0}^n$ $\sum_{j\ge 0}$  
$f^{(g)}_{N, k, i, j}(z, y_k)      
\F^{(g)}_V(\ldots; u(j).v_k,y_k; \ldots)\;dz^i \;dy_k^j$.    
\subsection{The genus $g$ chain condition}
Let us prove the following 
\begin{proposition}
With the condition\eqref{pendozo3}   
the chain condition \eqref{prodo} is fulfilled.   
\end{proposition}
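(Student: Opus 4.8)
The plan is to follow the template of the genus zero chain-condition argument of Section~\ref{genuszero0}, replacing the genus zero Zhu reduction function $\pi^{(0)}_N(z,y)$ by its genus $g$ analogue. By Theorem~\ref{mardato} and Corollary~\ref{proepttm}, the coboundary operator $\delta^{(g)}(u,z;{\bf x}_n)=B^{(g)}({\bf x}_{n+1})$ acts through reduction functions $f^{(g)}_{N,k,i,j}(z,y_k)$ that split into a handle contribution $\sum_{a=1}^g\sum_{l=0}^{2N-2}\Theta_{N,a}^l(z)\,\Res_a^l$ and a point contribution $\frac{1}{j!}\partial^j_{y_k}\Psi^{(g)}_N(z,y_k)$. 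First I would write out the double composition $\delta^{(g)}(u,z_{n+2};{\bf x}_{n+1})\,\delta^{(g)}(u',z_{n+1};{\bf x}_n)$ of \eqref{prodo}, using equivariance of the insertion operators $T_k(u(j))$ exactly as in the genus zero proof, so that the product reduces to a quadruple sum with coefficients $f^{(g)}_{N,j}f^{(g)}_{N',j'}$ multiplying an array $A^{ii'}_{jj'}$ collecting the $T$-actions.

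Next I would expand $\Psi^{(g)}_N(z,y_k)$ through its defining sewing sum $\sum_{\gamma\in\Gamma}\Pi^{(g)}_N(\gamma z,y)$ of \eqref{erpoerksr} in the two complementary domains $|y_k/z_m|<1$ and $|z_m/y_k|<1$, separating the leading $\frac{1}{z-y}$-type singularity of the identity coset $\gamma=\id$ from the regular remainder $\sum_l f_l(z)\,y^l$, with $f_l(z)$ supplied by \eqref{pwrepefk}. As at genus zero, summing the action of the quasiprimary descendants $\frac{1}{i_m!}(L(-1))^{i_m}u$ over weights $N_m=m$ lets me match the $\beta$-type powers $y_k^s$ of the singular expansion against the $\xi$-type powers $y_k^{l_m-j}$ of the regular part, in the notation of \eqref{merda}. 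Imposing that $\partial^{i_m}_{z_m}f_{l_m}(z_m)$ cancel the unwanted powers then yields the explicit genus $g$ condition \eqref{pendozo3}, the analogue of \eqref{pendozo1}, and truncates the series in $y_k$ to a polynomial of degree at most $2N-2$.

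Finally, with the series truncated, I would invoke the genus $g$ Ward identity of Proposition~\ref{kurst} to annihilate the truncated combination. The handle residue terms $\Theta_{N,a}^l(z)\,\Res_a^l$ are precisely those carrying the coefficients $p_a^l$ appearing in that identity, so the same polynomial $p\in\mathcal P_{2N-2}$ that kills the point part simultaneously accounts for the handle part, while the single application $\delta^{(g)}(u',z_{n+1};{\bf x}_n)$ need not vanish, so the family is genuinely a chain complex and \eqref{prodo} holds.

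The hard part will be the extra structure absent at genus zero: the Schottky group sum in $\Psi^{(g)}_N$ and the handle residue terms $\Theta_{N,a}^l(z)\,\Res_a^l$. One must verify that the rearrangements and power-matching used to derive \eqref{pendozo3} respect absolute and locally uniform convergence on the sewing domain, guaranteed by Theorem~13.1 of \cite{G} and Proposition~\ref{normodor}, and that the handle contributions introduce no new powers of $y_k$ that would obstruct Proposition~\ref{kurst}. Controlling the interplay of these two families of terms, rather than the bookkeeping of the point reduction which is formally identical to the genus zero case, is where the genuine content of the argument lies.
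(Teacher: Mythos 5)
Your overall strategy --- composing the two reductions through the equivariant $T$-insertions, expanding in complementary domains, matching powers to truncate the infinite series, and reconstructing the genus $g$ Ward identity of Proposition~\ref{kurst} --- is the same as the paper's. But there is a concrete misattribution that leaves the stated proposition unproved: you derive the hypothesis \eqref{pendozo3} from the \emph{point} part, i.e.\ from matching powers of $y_k$ in the expansion of $\Psi^{(g)}_{N_m}(z_m,y_k)=\sum_{\gamma\in\Gamma}\Pi^{(g)}_{N_m}(\gamma z_m,y_k)$. In the paper that matching produces a \emph{different} identification, namely \eqref{pordolozo1}, which involves $\gamma z_m$, the derivative index $j$, and the factor $l_m(l_m-1)\cdots(l_m-i_m)$. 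The condition \eqref{pendozo3} that the proposition actually assumes comes from the \emph{handle} part: one must expand $\partial_{z_m}^{i_m}\Theta_{N_m,a_m}^{l_m}(z_m)$ through the Schottky sewing formulas of Proposition~\ref{paraskas} and Theorem~\ref{palre}, i.e.\ through $T_a^l(z)=\rho_a^{-\half l}\,L(z)\bigl(I+D(I-\widetilde{A})^{-1}A\bigr)_a^l$, and then expand the moment-vector entries $L_b^n(z_m)$ of \eqref{ererepo}, which are derivatives of $\pi^{(g)}_{N_m}(z_m,w_b)$, geometrically in $w_b/z_m$. Matching the resulting powers $w_b^s$ against $w_b^{l_m-n}$ --- powers of the sewing variables $w_b$, not of $y_k$, with the moment index $n$, not $j$ --- is exactly what produces \eqref{pendozo3}; this is visible in its structure, where $l_m(l_m-1)\cdots(l_m-n+1)$ and $(i_m+n+1)\cdots(i_m+n+1+s)$ appear rather than the $j$-indexed factors of \eqref{pordolozo1}.

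Because of this, your plan treats the handle residue terms as if they automatically align with the coefficients $p_a^l$ of Proposition~\ref{kurst} once the point part is truncated (``the same polynomial \ldots\ simultaneously accounts for the handle part''). That is backwards relative to where the work lies: without the sewing expansion of $\Theta_{N,a}^l$ and the $w_b$-power matching, the condition \eqref{pendozo3} never arises in your argument and the handle side of the Ward identity is not reconstructed; conversely, your point-part matching must be recorded as the separate identification \eqref{pordolozo1}, not as \eqref{pendozo3}. As written, the proposal sketches a statement conditioned on the wrong identification while the stated hypothesis \eqref{pendozo3} remains unused and underived. To repair it, carry out the handle-part expansion via \eqref{ererepo}--\eqref{mandat}, Theorem~\ref{palre} and Proposition~\ref{paraskas}, including the exclusion of the opposite regime $|z_m/w_b|<1$ (ruled out exactly as at genus zero); your convergence remarks via Theorem~13.1 of \cite{G} and Proposition~\ref{normodor} then apply as you say.
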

\begin{proof}
For the chain condition we have 
\begin{eqnarray*}
\sum 
\left(  
\sum\limits_{a=1}^g \sum\limits_{l_m=0}^{2N_m-2}  
\frac{1}{i!}\partial_{z_m}^{i_m}       
\left( \Theta_{N_m, a_m}^{l_m}(z_m)\right) 
 \Res_{a_m}^{l_m} \F^{(g)}_V(u_m;{\bf v, y})   
\right. \qquad \qquad \qquad \qquad \qquad 
\nn
\left. 
+\sum\limits_{k=0}^n \sum\limits_{j\ge 0} 
\frac{1}{i!} \frac{1}{j!} \partial^n_{z_m} \partial^j_{y_k} 
\left(\Psi^{(g)}_{N_m}(z_m, y_k) \right) \;   
 dz_m^{i_m} \; dy_k^j.  
\right) f^{(g)}_{N'_{m'}, k', i'_{m'}, j'} 
\left(z'_{m'}, y'_{k'}\right) A^{i'_{m'}i}_{j'j}.   
\end{eqnarray*}
Using the same procedure for formulas of Subsection \ref{diktor}, 
in particular, \eqref{ererepo}--\eqref{mandat}, 
 formulas of Theorem \eqref{palre} and of 
Proposition \eqref{paraskas}, 
and summing for suitable $N_m$, $N'_{m'}$ 
we have to get the genus $g$ ward identity of Proposition \ref{kurst} 
for with \eqref{ereroitf}.     
Namely, 
\begin{eqnarray*}
&& \frac{1}{i_m!} \partial_{z_m}^{i_m} \Theta_{N_m, a_m}^{l_m}(z_m)  
= \frac{1}{i!}\partial_{z_m}^{i_m}   
\left(T_{a_m}^{l_m}(z_m)+(-1)^{N_m} \; 
\rho_{a_m}^{N_m-1-l_m}\; T_{-a_m}^{2N_m-2-l_m}(z_m) \right) 
\end{eqnarray*}
\begin{eqnarray*}
&&
=  
\rho_{a_m}^{-\half l_m} \;  
 \frac{1}{i_m!} \partial_{z_m}^{i_m} L(z_m)(I +  D(I-\widetilde{A})^{-1}A )_{a_m}^{l_m}   
\end{eqnarray*}
\begin{eqnarray*}
&&  
+(-1)^{N_m}\; \rho_{a_m}^{N_m-1-l_m} \; 
\rho_{-a_m}^{-\half (2N_m-2-l_m)} \;  
\frac{1}{i!}\partial_{z_m}^{i_m} L(z_m)  
(I +  D (I-\widetilde{A})^{-1}A )_{-a_m}^{2N_m-2-l_m}.  
\end{eqnarray*}
For an element of the vector $(L(z))$ we obtain 
for $z_m \ne 0$, $|\frac{w_b}{z_m}|<1$,  
\begin{eqnarray*} 
  \frac{1}{i_m!}\partial^{i_m}_{z_m} L(z_m)= \partial_{z_m}^{(i_m)}  
\left( \rho_b^{\half n} 
\frac{1}{n!} \partial^n_{w_b}
\pi_{N_m}^{(g)}(z_m, w_b)\; dz_m^{N_m}  \right) 
 \qquad \qquad \qquad \qquad \qquad \qquad
  &&
\end{eqnarray*}
\begin{eqnarray*}
  = \frac{1}{i_m!}\partial_{z_m}^{i_m}  
\left( \rho_b^{\half n}  
\frac{1}{n!} \partial^n_{w_b} 
\left(\frac{1}{z_m-w_b} +\sum\limits_{l_m=0}^{2N_m-2} 
 f_{l_m}(z_m) w_b^{l_m}\right)  
\; dz_m^{N_m}  \right) 
\qquad \qquad \qquad \qquad \qquad \qquad 
 &&
\end{eqnarray*}
\begin{eqnarray*}
 =  \frac{\partial_{z_m}^{i_m}}{i_m!}   
\left( \rho_b^{\half n}  
\left(\frac{(-1)^n}{(z_m-w_b)^{n+1}}  
+\sum\limits_{l_m=0}^{2N_m-2}   
f_{l_m}(z_m) l_m(l_m-1)\ldots(l_m-n+1) w_b^{l_m-n}\right) 
 dz_m^{N_m}  \right) &&
\end{eqnarray*}
\begin{eqnarray*}
 =   \rho_b^{\half n} 
\left(\frac{(-1)^n}{(z_m-w_b)^{i_m+n+1}}  
+\sum\limits_{l_m=0}^{2N_m-2}   
 \frac{1}{i_m!}\partial^{i_m}_{z_m} f_{l_m}(z_m) 
l_m(l_m -1) \ldots (l_m-n+1) w_b^{l_m-n}\right)  
\; dz_m^{N_m}   && 
\end{eqnarray*}
\begin{eqnarray*}
=   \rho_b^{\half n} 
\left(\frac{(-1)^n}{z_m^{i_m+n+1} \left(1-\frac{w_b}{z_m}\right)^{i_m+n+1}}  
+\sum\limits_{l_m=0}^{2N_m-2}   
 \frac{\partial^{i_m}_{z_m} f_{l_m}(z_m) }{i_m!}
l_m(l_m -1) \ldots (l_m-n+1) w_b^{l_m-n}\right)  
\; dz_m^{N_m} && 
\end{eqnarray*}
\begin{eqnarray*}
=   \rho_b^{\half n} 
\left(\frac{(-1)^n}{ z_m^{i_m+n+1} } 
\left(1 + \sum\limits_{s \ge 1} \frac{(i_m+n+1)(i_m+n+2)\ldots (i_m+n+1+s)}{s!} 
 \left(\frac{w_b}{z_m}\right)^s \right) \right. && 
\nn
\left. 
+\sum\limits_{l_m=0}^{2N_m-2}   
\frac{1}{i_m!}\partial^{i_m}_{z_m}  f_{l_m}(z_m)
 l_m(l_m -1) \ldots (l_m-n+1) w_b^{l_m-n}\right)  
\; dz_m^{N_m}. && 
\end{eqnarray*}
The identification is the following, 
\begin{eqnarray}
\label{pendozo3}
   \partial^{i_m}_{z_m} f_{l_m}(z_m)   
=- \frac{(-1)^n\; m \; i_m!(i_m+n+1)(i_m+n+2)\ldots (i_m+n+1+s)} 
{s! \; l_m (l_m -1) \ldots (l_m-n+1) \;z_m^{i_m+n+s+1} }. 
\end{eqnarray}
Similarly, in the opposite case, $w_b \ne 0$, $|\frac{z_m}{w_b}|<1$, 
we obtain 
\begin{eqnarray*} 
  \rho_b^{\half n} 
\left(\frac{(-1)^n}{ w_b^{i_m+n+1} }  
\left(1 + \sum\limits_{s \ge 1} \frac{(-1)^s (i_m+n+1)(i_m+n+2)\ldots (i_m+n+1+s)}{s!} 
 \left(\frac{z_m}{w_b}\right)^s \right) \right. &&  
\end{eqnarray*}
\begin{eqnarray*} 
\left. 
+\sum\limits_{l_m=0}^{2N_m-2}   
 \frac{1}{i_m!}\partial^{i_m}_{z_m} f_{l_m}(z_m) 
 l_m(l_m -1) \ldots (l_m-n+1) w_b^{l_m-n}\right)  
\; dz_m^{N_m},  && 
\end{eqnarray*}
with the identification $z_m=w_b^{\frac{i_m+n+1}{s}+\alpha_m}$, 
$0 < \alpha_m <1$, but  
$\alpha_m=l_m-n \ge 0$. Thus, this case is not possible. 
For the second term of Ward identity of Proposition \ref{kurst} with 
 $z_m$, $y_k \in \Omega_0(\Gamma)$ we have to get 
$\sum_{k=1}^n$ $\frac{1}{l_m!}$ $\partial^l_{y_k}$ $\left(p(y_k)\right)$ 
$\sum_{l=0}^{2N-2}$   
$\F^{(g)}_V$ $(\ldots$; $u(l).v_k$, $y_k$; $\ldots)$ $dy_k^{l+1-N}=0$. 
Thus we have to identify $z_m$ with some expression of $y_k$,   
\begin{eqnarray*} 
\sum
\frac{\partial^{i_m}_{z_m} \partial^j_{y_k}   }{i_m!j!}  
\left( \Psi^{(g)}_{N_m}(z_m, y_k) \right) \;      
 dz^{i_m}_m \;dy_k^j \; 
 f^{(g)}_{N'_{m'}, k', i'_{m'}, j'}(z'_{m'}, y'_{k'})  
\; d(z')^{i'_{m'}}_{m'} \; d(y')^{j'}_{k'} \; A^{i_m i'_{m'}}_{jj'} 
&&     
\end{eqnarray*}
\begin{eqnarray*}
= 
\sum
\frac{\partial^{i_m}_{z_m}  \partial^j_{y_k}  }{i_m!j!} 
\left( \sum\limits_{\gamma\in\Gamma} \Pi^{(g)}_N(\gamma z_m, y_k) \right)  
dz_m^{i_m} dy_k^j 
 f^{(g)}_{N'_{m'}, k', i'_{m'}, j'} (z'_{m'}, y'_{k'}) 
 d(z')^{i'_{m'}}_{m'}  d(y')^{j'}_{k'} A^{i_m i'_{m'}}_{jj'} &&  
\end{eqnarray*}
\begin{eqnarray*}
= 
\sum   
\frac{\partial^{i_m}_{z_m}\partial^j_{y_k}  }{i_m!j!}  
\sum\limits_{\gamma \in\Gamma}
\left(\frac{1}{\gamma z_m-y_k} 
+ \sum\limits_{l_m=0}^{2N_m-2} f_{l_m}(\gamma z_m)  
y_k^{l_m}\right) 
dz_m^{i_m}  dy_k^j  &&
\end{eqnarray*}
\begin{eqnarray*}
\nn
 f^{(g)}_{N'_{m'}, k', i'_{m'}, j'} (z'_{m'}, y'_{k'}) 
 dz^{i'_{m'}}_{m'}  d(y')^{j'}_{k'}  A^{i_m i'_{m'}}_{jj'}  && 
\end{eqnarray*}
\begin{eqnarray*}
= 
 \sum
\sum\limits_{\gamma\in\Gamma}
\left( \frac{(-1)^{i_m}\gamma^{i_m}}{i_m!j!(\gamma z_m-y_k)^{i_m+j+1}} \right.
 \qquad \qquad \qquad \qquad \qquad \qquad \qquad \qquad &&
\end{eqnarray*}
\begin{eqnarray*} 
 \left. 
  +\sum\limits_{l_m=0}^{2N_m-2} l_m(l_m-1) 
\ldots (l_m -i_m) \gamma^{i_m}   
\frac{1}{i_m!} \frac{1}{j!} \partial^{i_m} f_{l_m}(\gamma z_m) y_k^{l_m-j} \right) \; 
dz_m^{i_m} \;dy_k^j \;   && 
\end{eqnarray*}
\begin{eqnarray*} 
 f^{(g)}_{N'_{m'}, k', i'_{m'}, j'} (z'_{m'}, y'_{k'})
\; dz^{i'_{m'}}_{m'} \; d(y')^{j'}_{k'} \; A^{i_m i'_{m'}}_{jj'}.  &&
\end{eqnarray*}
Note that due to the finite polynomial in $y_k$, the second summand 
vanishes after some $j$.  
Using the same trick as for $g=0$,
 we expand 
the first summand for $\gamma z_m\ne 0$ in terms of $|\frac{y_k}{\gamma z_m}|<1$, 
and sum over $N_m$ (the summation is started from zero now), 
\begin{eqnarray*} 
\sum 
\sum\limits_{\gamma\in\Gamma}
\left(\frac{(-1)^{i_m}\gamma^{i_m}}{i_m!j! (\gamma z_m)^{i_m+j+1} }  
 \left(\sum\limits_{s \ge 0}
\frac{(i_m+j+1)(i_m+j+2) \ldots (i_m+j+s)}{s!} 
\left( \frac{y_k}{\gamma z_m}\right)^s \right) \right. && 
\nn
 \left. 
+\sum\limits_{l_m=0}^{2N_m-2} \frac{1}{j!} l_m (l_m-1) \ldots (l_m -i_m) \gamma^{i_m}  
\frac{1}{i_m!}\partial^{i_m}_{z_m} f^{(g)}_{l_m}(\gamma z_m) y_k^{l_m-j}\right) \; 
dz_m^{i_m} \;dy_k^j &&
\nn
f^{(g)}_{N'_{m'}, k', i'_{m'}, j'}\left(z'_{m'}, y'_k\right)  
\; dz^{i'_{m'}}_{m'} \; d(y')^{j'}_{k'} \; A^{i_m i'_{m'}}_{jj'}. &&
\end{eqnarray*}
As for the genus zero case consider $\alpha$, $\beta$, $\xi$ \eqref{merda} 
and corresponding $\alpha'$, $\beta'$, $\xi'$ 
but with $f^{(g)}_{l_m}(\gamma z_m)$.  
Using the same arguments one sees that it is not possible to make 
vanish as a product of two polynomials. 
Therefore, we take $s_0 \ge 0$, and with $s=l_m-j$, and, therefore, 
\begin{eqnarray}
\label{pordolozo1}
 \partial^{i_m} f_{l_m}(\gamma z_m) 
=  \frac{(-1)^{i_m} i_m! \; (i_m+j+1)(i_m+j+2) \ldots (i_m+j+s) }
{s! \; l_m (l_m-1) \ldots (l_m -i_m) (\gamma z_m)^{i_m+j+s+1} }. 
\end{eqnarray}
Similar to the genus zero case, the alternative case 
 $\left|\frac{\gamma z_m}{y_k}\right|<1$ is not possible. 
Thus the identity of Proposition \ref{kurst} 
 is reconstructed and the chain condition is satified.  
\end{proof}
For the cohomology, with  ${\bf a}_n$ corresponding to ${\bf v}_n$, 
 we obtain the factor space 
 $H^n (V, g)$ $=$  
 $\prod_{i=1}^n$ $\sum_{k \ge 1}^i$ $\sum_{j \ge 1}$    
 $\Psi^{(g)}_{N, i, k, j}$ $(x_i)/$ 
$\prod_{i'=1}^{n-1}$ $\sum_{k' \ge 1}^{i'}$ $\sum_{ j' \ge 1}$ 
 $\Psi^{(g)}_{N,  i', k', j'}$ $(x'_{i'})$  
$=$  $\mathcal L\left({\bf a}^{-1}_n.{\bf y}_n \right)/$ 
$\mathcal L\left({\bf a}^{-1}_{n-1}.{\bf y}'_n \right)$. 
From \cite{TW1} we know that 
 $\Theta_{N,a}^l (z)$, $\Psi^{(g)}_N(z,y)$ terms depend on $N=\wt(u)$ but are
otherwise independent of the vertex operator algebra $V$,
 i.e., they are  analogues of the genus zero $\Pi^{(g)}_N(z, y)$ coefficients   
and the genus one Weierstrass $P_1$ coefficients found in \cite{Z1}.   
 The equation \eqref{rpoerpokf} is independent of the choice
  $\Psi^{(g)}_N(z,y)$ of 
and the $N$-form spanning set  
$\{\widehat{\Theta}^l_{N,a}(z)\}$. 
\section*{Acknowledgment}
The author is supported by the Institute of Mathematics, 
 the Academy of Sciences of the Czech Republic (RVO 67985840). 
\section{Appendix: Differential structures on Riemann surfaces}
\label{vtorog}
\subsection{The Bers quasiform}
In this Subsection 
 we recall the Bers quasiform $\Psi^{(g)}_N(z,y)$  
which is defined for $g\ge 2$ and $N\ge 2$. 
In order to construct the Bers potential for 
holomorphic $N$-forms, Bers introduced $\Psi^{(g)}_N(z,y)$ in \cite{Be1, Be2}.  
It is also useful for the construction of
the  Laplacian determinant line bundle associated with 
$N$-forms \cite{McIT}.  
In the Subsections \ref{bendas} and \ref{systemo}  
 it is shown that  $\Psi^{(g)}_N(z,y)$ and
the associated  $N$-form spanning set  
$\{\Theta_{N, a}^l (z)\}_{1 \le a\le g}^{0 \le l \le 2N-2}$   
 play an important role in genus $g$ Zhu reduction formulas for 
vertex operator algebras.  

The Bers quasiform of weight $(N,1-N)$ for $g \ge 2$ 
and $N \ge 2$ is defined by the Poincar\'e  
series \cite{Be1, Be2, TW1} 
for M\"obius invariant $\Pi_N(z,y)$ for all $\gamma\in\SL_2(\C)$,
 $z$,$y\in \Omega_0 (\Gamma)$, where  
 ${\bf A}_{2N-2}=(A_0$, $\ldots$, $A_{2N-2})$, $A_l\in\Lambda(\Gamma)$   
are distinct limit points of $\Gamma$, 
\begin{eqnarray}
\label{olovodo} 
\Psi^{(g)}_N(z,y)=
\sum\limits_{\gamma\in\Gamma} \Pi^{(g)}_N(\gamma z,y), 
\qquad \qquad \qquad\qquad \qquad \qquad \qquad \qquad \qquad \qquad \qquad  &&
\\
\label{erpoerksr}
 \Pi^{(g)}_N(z,y)=\Pi^{(g)}_N(z, y ;{\bf A}_{2N-2})
=\Pi^{(g)}_N(\gamma z, \gamma y;{\bf \gamma A}_{2N-2}) 
= \frac{dz^Ndy^{1-N}}{z-y}\prod_{l=0}^{2N-2} 
\frac{y-A_l}{z-A_l }.&&
\end{eqnarray}
Note that $\Psi^{(g)}_N(z,y)$ is a bidifferential $(N,1-N)$-quasiform
 meromorphic for $z$, $y\in \Omega_0 (\Gamma)$ with    
simple poles of residue one at $y=\gamma z$ for all $\gamma\in\Gamma$. 
It is an $N$-differential in $z$ since
$\Psi^{(g)}_N(\gamma z,y) = \Psi^{(g)}_N(z,y)$, $\gamma\in\Gamma$,  
by construction, and it is a quasiperiodic $(1-N)$-form in $y$
 with  
$\Psi^{(g)}_N( z,\gamma y) - \Psi^{(g)}_N(z,y)=\chi[\gamma](z,y)$,
 $\gamma\in\Gamma$,  
where $\chi[\gamma](z,y)$ is a holomorphic 
 $N$-form in $z$ \cite{Be1}.  
In particular, for a Schottky group generator $\gamma_a$, $1 \le a \le g$,
 $y_a=y-w_a$, 
$\chi[\gamma_a](z,y)=-\sum\limits_{l=0}^{2N-2}  
\Theta_{N,a}^l (z)y_a^l \;dy^{1-N}$,  
and $\{\Theta_{N,a}^l (z)\}_{1 \le a\le g}^{0 \le l 2N-2}$  
spans the $d_N=(g-1)(2N-1)$-dimensional space of holomorphic $N$-forms.  
The Bers quasiform \eqref{erpoerksr} depends on the choice
 of limit set points $\{A_l \}$.
We may expand 
$p(y)=\sum_{l=0}^{2N-2} \frac{1}{l!}\partial^l_{w_a} p(w_a)y_a^l$, 
 and also find
for $1 \le a \le g$, $l\in \left\{0, \ldots, 2N-2 \right\}$, 
\begin{eqnarray}
\label{ereroitf}
p_a^l = (-1)^{N+1} \rho_a^{N-l-1} 
  \frac{1}{(2N-2-l)!}\partial^{2N-2-l}_{w_{-a}} p(w_{-a})
- \frac{1}{l!}\partial^l_{w_a} \frac{1}{l!}\partial^l_{w_a} p(w_a).  &&  
\end{eqnarray}
\subsection{The Schottky sewing formulas for  
$\Psi^{(g)}_N(z,y)$ and $\Theta_{N,a}^l (z)$}
\label{diktor}
In this Subsection we review results of \cite{TW1}   
where expansion formulas for the Bers quasiform  
$\Psi^{(g)}_N(z,y)$ and $\Theta_{N,a}^l(z)$ 
were given in terms of the sewing parameters $\rho_a$ 
for $N\ge 2$. 
These expressions are very useful in vertex operator algebra
  theory. 
 Let $\Pi^{(g)}_N(z,y)=\pi^{(g)}_N(z,y)\;dz^N\;dy^{1-N}$, for $N\ge 1$, and  
 Lagrange polynomial 
$\mathcal Q_i(y)=\prod_{j\neq i}\frac{y-A_j}{A_i-A_j} \in \mathcal P_{2N-2} (y)$, 
 where $\sum_{i=0}^{2N-2} \mathcal Q_i(y)=1$, 
\begin{eqnarray}
\label{pwrepefk} 
\pi^{(g)}_N(z,y) 
=\frac{1}{z-y}+\sum\limits_{l=0}^{2N-2} f_l(z)y^l= 
\frac{1}{z-y} 
-\sum\limits_{i=0}^{2N-2} \frac{1}{z-A_i}\mathcal Q_i(y).   
\end{eqnarray}  
It is useful to define the following forms  
labeled by $a$, $b\in \{-1, \ldots, - g, 1, \ldots, g\}$ and integers $m$, $n\ge 0$  
constructed from moment integrals of $\Pi^{(g)}_N(z,y)$ as follows:
\begin{eqnarray}
\label{ererepo} 
L_b^n(z)= 
\frac{\rho_b^{\half n} \partial^n_{w_b}}{n!} 
\pi^{(g)}_N (z,w_b)\;dz^N, \;      
R_a^m(y)= \frac{(-1)^N\rho_a^{\half (m+1)} \partial^m_{w_{-b}}}{m!} 
  \pi^{(g)}_N (w_{-a},y)\;dy^{1-N},     
\end{eqnarray}
where $y_b=y-w_b$ and $z_{-a}=z-w_{-a}$.  
One introduces doubly indexed 
infinite row and column vectors $L(z)=(L_b^n(z))$ and  
$R(z)=(R_a^m(y))$. 
We also define the doubly indexed matrix  
$A=(A_{ab}^{mn})$ with components for 
 $e_m^n(y)=\sum_{l=0}^{2N-2} \binom{l}{n} f_l^{(m)}(y)y^{l-n}$  
with $f_l$ of \eqref{pwrepefk}, 
\begin{eqnarray*} 
A_{ab}^{mn}=  
(-1)^N\rho_a^{\half(m+\delta_{a, -b}n+1)} 
\left(\delta_{a, -b} e_m^n(w_{-a}) 
+ (1-\delta_{a, -b})
 \frac{\rho_b^{\half n} \partial^m_{w_{-a}} \partial^n_{w_b} }{m!n!} 
\pi^{(g)}_N (w_{-a}, w_b) \right). 
\end{eqnarray*}  
We note that $A_{a,-a}^{mn}=0$ for all $n>2N-2$.
One defines the matrix $D$ with components 
$D_{ab}^{mn}=\delta_{m,n+2N-1} \delta_{a,b}$.  
Let $\widetilde{A}=AD$.
 These are independent of the $f_l(z)$ terms with 
 $L_b^n(z)D =
\frac{\rho_b^{\half (n+2N-1)}}{(z-w_{b})^{n+2N}}\; dz^N$, 
\begin{eqnarray} 
 \label{mandat} 
&& \widetilde{A}_{ab}^{mn}=
  (1-\delta_{a, -b})(-1)^{m+N}\binom{m+n+2N-1}{m}
\frac{\rho_a^{\half (m+1)}\rho_b^{\half (n+2N-1)}} 
{(w_{-a}-w_b)^{m+n+2N}}, 
\end{eqnarray} 
We define $(I-\widetilde{A})^{-1}=\sum_{k\geq 0}\widetilde{A}^k$ 
 where $I$ denotes the infinite identity matrix.  
Then  
$\Psi^{(g)}_N(z,y)$ can be expressed in terms of
 $\Pi^{(g)}_N$, $L(z)$, $R$, $\widetilde{A}$ as follows \cite{TW1, TW}.
\begin{theorem}
\label{palre} 
With $\left(I-\widetilde{A} \right)^{-1}$
 convergent for all $({\bf w,\rho})\in \Cg$ 
for all $N\ge 1$ and $z$, $y\in \D$, 
$\Psi^{(g)}_N (z,y)=\Pi^{(g)}_N(z,y)+L(z)D 
\left(I-\widetilde{A} \right)^{-1}R(y)$. \hfill $\qed$  
\end{theorem}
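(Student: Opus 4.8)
The plan is to evaluate the Poincar\'e series \eqref{olovodo}, $\Psi^{(g)}_N(z,y)=\sum_{\gamma\in\Gamma}\Pi^{(g)}_N(\gamma z,y)$, by organizing the summation over the free Schottky group $\Gamma=\langle\gamma_1,\ldots,\gamma_g\rangle$ according to reduced words in its generators. First I would isolate the contribution of the identity $\gamma=\mathrm{id}$, for which $\gamma z=z$ and the summand is exactly $\Pi^{(g)}_N(z,y)$; this reproduces the leading term of the claimed identity. What remains is to show that the sum over all nontrivial $\gamma$ equals $L(z)D\left(I-\widetilde A\right)^{-1}R(y)$.

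The key step is a recursion on word length built from the sewing relation \eqref{telefon}, $(z'-w_{-a})(z-w_a)=\rho_a$. Every nontrivial $\gamma$ factors uniquely through a last generator $\gamma_a^{\pm}$, and crossing the handle labelled $a$ is governed by this relation, which acts on the Laurent data about the sewing points $w_a$ and $w_{-a}$ as a M\"obius involution. I would expand $\pi^{(g)}_N(\gamma z,y)$ via \eqref{pwrepefk} in local coordinates centred at the $w_b$ and $w_{-a}$; by construction the resulting moment coefficients are precisely the entries of the row vector $L(z)=(L_b^n(z))$ and the column vector $R(y)=(R_a^m(y))$ of \eqref{ererepo}. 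The shift matrix $D$, with $D_{ab}^{mn}=\delta_{m,n+2N-1}\delta_{a,b}$, implements the passage from one side of a handle to the other, the index shift by $2N-1$ encoding the $(N,1-N)$ weight, while the moment matrix $A$ records how an expansion centred at one sewing point re-expands about another. A single handle crossing therefore contributes the transfer factor $\widetilde A=AD$, whose explicit entries I would match against \eqref{mandat}.

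With the single-crossing transfer identified, I would sum over all reduced words: a word crossing handles $k$ times contributes a term of the shape $L(z)D\,\widetilde A^{\,k-1}R(y)$, so the full nontrivial sum resums into the geometric series $L(z)D\sum_{k\ge 0}\widetilde A^{\,k}R(y)=L(z)D\left(I-\widetilde A\right)^{-1}R(y)$. The hypothesis that $\left(I-\widetilde A\right)^{-1}=\sum_{k\ge 0}\widetilde A^{\,k}$ converges for all $(\mathbf{w},\rho)\in\Cg$ and $z,y\in\D$ guarantees that the Poincar\'e series converges absolutely and that its rearrangement by word length is legitimate; this is where the convergence hypothesis of the theorem enters.

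The main obstacle is the combinatorial bookkeeping in the recursion: one must verify that the local moment expansion of $\pi^{(g)}_N(\gamma z,y)$ across a handle reproduces the vectors $L$, $R$ and the matrix entries $\widetilde A_{ab}^{mn}$ of \eqref{mandat} exactly, including the half-integer powers of the sewing parameters $\rho_a$ and the binomial coefficients, and that the factor $(1-\delta_{a,-b})$ in \eqref{mandat} correctly excludes a handle being immediately recrossed by its own inverse, so that the resolvent resums reduced words rather than all words. Once the transfer matrix is matched to $\widetilde A$, the remaining summation is the standard resolvent expansion of a Poincar\'e series and presents no further difficulty.
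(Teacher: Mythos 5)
You should first note that this paper contains no proof of Theorem \ref{palre} to compare against: it is stated with a \qed in a review appendix that explicitly ``reviews results of \cite{TW1}'', so the result is imported from Tuite--Welby rather than proved here. Judged on its own merits, your outline is the natural one and matches the mechanism used in that literature: the identity element of $\Gamma$ gives $\Pi^{(g)}_N(z,y)$, and the word-length-one terms do work out exactly as you claim. Indeed, writing $\gamma_a z = w_{-a}+\rho_a/(z-w_a)$ from \eqref{telefon}, with Jacobian $d(\gamma_a z)=-\rho_a(z-w_a)^{-2}dz$, the Taylor expansion of $\Pi^{(g)}_N(\gamma_a z,y)$ about $w_{-a}$ has $m$-th term $\frac{(-1)^N\rho_a^{N+m}}{m!\,(z-w_a)^{m+2N}}\,\partial^m_{w_{-a}}\pi^{(g)}_N(w_{-a},y)\,dz^N dy^{1-N}$, which is precisely $(L(z)D)_a^m R_a^m(y)$ given \eqref{ererepo} and the stated formula for $L_b^nD$; summing over $a$ and $m$ gives $LDR$, the $k=0$ term of the resolvent. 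The non-backtracking factor $(1-\delta_{a,-b})$ in \eqref{mandat} then matches reduced words, so the resummation into $L(z)D(I-\widetilde{A})^{-1}R(y)$ is the right combinatorial picture.

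Two genuine gaps remain, however. First, the re-expansion lemma for longer words is not mere bookkeeping: each crossing requires expanding about the next sewing point, and the validity of those Taylor expansions is exactly where the hypothesis $z,y\in\D$ (points in the Schottky fundamental domain, outside the isometric circles) enters; your argument never invokes it, yet without it the geometric expansions on which the whole resummation rests need not converge. Second, the range $N\ge 1$ in the theorem is not covered by your argument: the paper's own Appendix \ref{vtorog} (following Bers) defines the Poincar\'e series \eqref{olovodo} only for $N\ge 2$, because for $N=1$ the series $\sum_{\gamma\in\Gamma}\Pi^{(g)}_1(\gamma z,y)$ need not converge absolutely, so there is no rearrangement-by-word-length to perform. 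Your claim that convergence of $(I-\widetilde{A})^{-1}$ ``guarantees that the Poincar\'e series converges absolutely'' gets the logic backwards for this edge case; in the cited source the $N=1$ form is handled separately (in effect the sewing expansion serves as its definition). You should restrict your proof to $N\ge 2$ and state explicitly how $N=1$ is to be treated.
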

We also find that the holomorphic $N$-form
 $\Theta_{N,a}^l (z)$ of $\chi[\gamma_a](z,y)$ is given by  
\begin{proposition}
\label{paraskas} 
Let $1 \le a\le g$, and $l\in \{0, \ldots, 2N-2\}$, and 
$T_a^l(z)=\rho_a^{-\half l}  
L(X)(I + D(I-\widetilde{A})^{-1}A  )_a^l$. Then 
$\Theta_{N,a}^l(z)
=T_a^l(z)+(-1)^N\rho_a^{N-1-l}T_{-a}^{2N-2-l}(z)$. 
\hfill $\qed$ 
\end{proposition}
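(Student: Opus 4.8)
The plan is to compute the quasiperiodicity defect $\chi[\gamma_a](z,y) = \Psi^{(g)}_N(z,\gamma_a y) - \Psi^{(g)}_N(z,y)$ directly from the sewing expression of Theorem \ref{palre} and to read off $\Theta_{N,a}^l(z)$ as the coefficient of $-y_a^l\,dy^{1-N}$ in its expansion, where $y_a = y - w_a$ and the generator acts by $\gamma_a y = w_{-a} + \rho_a/y_a$ via the sewing relation \eqref{telefon}. Writing $\Psi^{(g)}_N(z,y) = \Pi^{(g)}_N(z,y) + L(z)D(I-\widetilde{A})^{-1}R(y)$ and subtracting, I would obtain
\[
\chi[\gamma_a](z,y) = \bigl(\Pi^{(g)}_N(z,\gamma_a y) - \Pi^{(g)}_N(z,y)\bigr) + L(z)D(I-\widetilde{A})^{-1}\bigl(R(\gamma_a y) - R(y)\bigr),
\]
so the two summands of $\Theta_{N,a}^l$ should emerge, one from the local difference of $\Pi^{(g)}_N$ and one from the resummed sewing correction, organised by the handle index $a$ and its partner $-a$.

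First I would record the algebraic simplification that, since $\widetilde{A} = AD$, one has the resolvent identity
\[
I + D(I-\widetilde{A})^{-1}A = (I - DA)^{-1},
\]
obtained by expanding both sides as geometric series in $DA$ and regrouping. This rewrites the target as $T_a^l(z) = \rho_a^{-\half l}\,L(z)(I-DA)^{-1}{}_a^l$ and will be the bookkeeping device that lets the infinite matrix series collapse to a closed form. Next I would establish the transformation law of the moment vector $R(y)$ under $y \mapsto \gamma_a y$: using $\gamma_a y = w_{-a} + \rho_a/y_a$ together with the definitions \eqref{ererepo} of $R_a^m(y)$ as $w_{-a}$-moments of $\pi^{(g)}_N$, the finite part of $R(\gamma_a y) - R(y)$ should be expressible through the same matrix blocks $A_{ab}^{mn}$ of \eqref{mandat} that already encode the interaction of the points $w_{\pm a}$. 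The essential point is that the inversion implicit in \eqref{telefon} exchanges $a \leftrightarrow -a$ with the index reflection $m \mapsto 2N-2-m$ and produces the weight $(-1)^N \rho_a^{N-1-m}$.

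Finally I would assemble the two contributions. Expanding $-\Pi^{(g)}_N(z,y)$ in powers of $y_a$ reproduces exactly the bare term $\rho_a^{-\half l}L_a^l(z)$, i.e. the $L(z)\cdot I$ part of $T_a^l(z)$, since by \eqref{ererepo} the $l$-th $w_a$-moment of $\pi^{(g)}_N(z,\cdot)$ is precisely $\rho_a^{-\half l}L_a^l(z)$; symmetrically, expanding $+\Pi^{(g)}_N(z,\gamma_a y)$ about $w_{-a}$ produces the bare part of the partner $(-1)^N\rho_a^{N-1-l}T_{-a}^{2N-2-l}(z)$, with the power of $\rho_a$ and the index reflection forced by $\gamma_a y = w_{-a} + \rho_a/y_a$. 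Feeding the transformation law for $R$ through $L(z)D(I-\widetilde{A})^{-1}(\cdot)$ and applying the resolvent identity then dresses both bare terms into the full $(I-DA)^{-1}$ form, and matching the coefficient of $y_a^l\,dy^{1-N}$ against $\chi[\gamma_a] = -\sum_{l=0}^{2N-2}\Theta_{N,a}^l(z)\,y_a^l\,dy^{1-N}$ yields the claim.

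The hard part will be the transformation law for $R(\gamma_a y) - R(y)$ and showing that its resummation against $D(I-\widetilde{A})^{-1}$ truncates to the finite combination stated: one must verify that the infinitely many moments conspire, through the geometric series and the shift matrix $D$, to leave exactly the two closed-form pieces with the correct reflection of indices and power of $\rho_a$, with no residual $y_a^l$ for $l > 2N-2$. This last vanishing is the consistency check that $\chi[\gamma_a]$ lies in the $d_N$-dimensional space spanned by the $\Theta_{N,a}^l(z)$, as asserted after \eqref{erpoerksr}.
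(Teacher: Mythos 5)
First, a point of reference: the paper contains no proof of Proposition \ref{paraskas} to compare against --- it is stated with a \textup{qed} box and imported verbatim from Tuite--Welby \cite{TW1} (the surrounding subsection explicitly says it reviews results of that paper). So your attempt has to be judged on its own merits, and it essentially reconstructs the natural derivation behind the cited result. The checkable steps in your outline are sound. The resolvent identity $I+D(I-\widetilde{A})^{-1}A=(I-DA)^{-1}$ is correct as a formal geometric-series identity in $DA$, convergent wherever Theorem \ref{palre} holds. The bare-term matching also works, including the signs: the coefficient of $y_a^l\,dy^{1-N}$ in $-\Pi^{(g)}_N(z,y)$ is $-\rho_a^{-\half l}L_a^l(z)$ by \eqref{ererepo}, the $I$-part of $-T^l_a(z)$; and for $+\Pi^{(g)}_N(z,\gamma_a y)$ with $\gamma_a y=w_{-a}+\rho_a/y_a$ from \eqref{telefon}, the Jacobian factor $d(\gamma_a y)^{1-N}=(-\rho_a)^{1-N}y_a^{2N-2}\,dy^{1-N}$ converts the Taylor expansion about $w_{-a}$ into the coefficient $-(-1)^N\rho_a^{N-1-l}\,\rho_a^{-\half(2N-2-l)}L^{2N-2-l}_{-a}(z)$ of $y_a^l\,dy^{1-N}$, which is exactly the $I$-part of the partner term: the index reflection $l\mapsto 2N-2-l$ and the weight $(-1)^N\rho_a^{N-1-l}$ emerge just as you predicted.

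The genuine gap is that all of the proposition's content is concentrated in the step you explicitly defer: computing $R(\gamma_a y)-R(y)$ and verifying that, pushed through $L(z)D(I-\widetilde{A})^{-1}$, it (i) dresses both bare terms into the full $(I-DA)^{-1}$ form at precisely the indices $(a,l)$ and $(-a,2N-2-l)$, and (ii) cancels every residual power of $y_a$: the negative powers coming from the Laurent tail of $\Pi^{(g)}_N(z,\gamma_a y)$ (the expansion runs over all $n\ge 0$, i.e.\ $l=2N-2-n$ can be negative) as well as the powers $l>2N-2$ from the Taylor tail of $-\Pi^{(g)}_N(z,y)$. Moreover, your claim that the finite part of $R(\gamma_a y)-R(y)$ is expressible through the blocks $A^{mn}_{ab}$ is correct only off the diagonal: for $b\neq a$ the expansion of $R^m_b(\gamma_a y)$ about $w_{-a}$ does produce the moments $\partial^m_{w_{-b}}\partial^n_{w_{-a}}\pi^{(g)}_N(w_{-b},w_{-a})$, i.e.\ entries of type $A^{mn}_{b,-a}$, but for $b=a$ the map $\gamma_a$ moves $y$ across the pole of $\pi^{(g)}_N(w_{-a},\cdot)$, and it is exactly here that the $e^n_m$ and $f_l$ polynomial pieces (the $\delta_{a,-b}$ term in the definition of $A^{mn}_{ab}$) must enter; your sketch does not distinguish this diagonal case. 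Until that computation is carried out --- it is the entire substance of the statement, the rest being bookkeeping --- what you have is a credible and correctly oriented plan, consistent with the formula to be proved, rather than a proof.
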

\section{Appendix: Vertex operator algebras} 
\label{generererv}
\subsection{Vertex operator algebras} 
\label{voa} 
In this Subsection we recall some facts about vertex operator algebras,  
  \cite{DL, FHL, K, LL, MT1}.
 A vertex operator 
 algebra is a quadruple 
$(V,Y(.,.),\vac,\omega)$ consisting 
 of a graded vector space  
$V=\bigoplus_{n\ge 0}V_{(n)}$, with $\dim V_{(n)}<\infty$,
containing two specific elements.  
 Those are called the vacuum vector $\vac\in V_{(0)}$ 
  and the Virasoro conformal vector $\omega\in V_{(2)}$.    
For each $u\in V$, one associates a vertex operator which is 
a formal Laurent series in $z$ given by    
$Y(u,z)=\sum_{n\in\Z} u(n)z^{-n-1}$,   
with modes $u(n) \in \End(V)$.  
The lower truncation condition is assumed: 
 for each $u$, $v\in V$, $u(n).v=0$ for all
 $n \gg 0$. 
The creativity condition is 
 $u=u(-1).\vac$, $u(n).\vac =0$ for all $n\ge 0$. 
For formal variables $z$, $y$, one uses 
the binomial expansion for $m\in\Z$,   
$(z+y)^m=\sum_{k\ge 0}\binom{m}{k} z^{m-k}y^k$. 
The vertex operators also obey locality condition 
$(z-y)^N[Y(u,z),Y(v,y)]=0$, $N\gg 0$.  
For the Virasoro conformal vector $\omega$
$Y(\omega,z) =\sum_{n\in\Z}L(n)z^{-n-2}$,  
where the operators 
 $L(n)=\omega(n+1)$ satisfy the Virasoro algebra commutation relations 
$[L(m),L(n)]=(m-n)L(m+n)+\frac{C}{2}\binom{m+1}{3}\delta_{m,-n}\Id_V$,  
for a constant central charge $C\in\C$. 
The translation property for vertex operators is given by 
$Y(L(-1)u,z)=\partial Y(u,z)$. 
The grading on $V$ is defined via the $L(0)$ Virasoro mode, i.e., 
 $V_{(n)}=\{v\in V:L(0)v=nv\}$ where
$v\in V_{(n)}$ is the conformal weight $\wt(v)=n$.   
 For $u \in V_{(N)}$  
\begin{eqnarray}
\label{genroooor} 
u(j):V_{(k)}\rightarrow V_{(k+N-j-1)}. 
\end{eqnarray}
The commutation rule is the following, for all $u$, $v\in V$, 
 $[u(k)$, $Y(v,z)]$ $=$ $\left(\sum_{j\ge 0}\right.$ $Y(u(j).v,z)$ 
 $\left. \partial_z^{(j)}\right)z^k$.   
Vertex algebra element enjoy the associativity identity, 
 for each $u$, $v\in V$ 
there exists $M\ge 0$ such that
$(z+y)^M Y(Y(u,z)v,y)=(z+y)^M Y(u,z+y)Y(v,y)$.  
Associated with the formal M\"obius map  
$z\rightarrow \rho/z$, for a given scalar $\rho \neq 0$,  
we define an adjoint vertex operator \cite{FHL, L} 
$Y_\rho^\dagger(u,z)$ $=$ $\sum_{n\in\Z}$ $u_\rho^\dagger(n)$ $z^{-n-1}$   
$=$ $Y\left(e^{\frac{z}{\rho}L(1)}\right.$ $\left.(-\frac{\rho}{z^2}\right)^{L(0)} 
u$, $\left. \frac{\rho}{z}\right)$.  
We write $Y^\dagger(u,z)$ for the adjoint when $\rho=1$.
Let $\{b\}$ be a homogeneous basis for $V$ with the dual basis $\{\bbar \}$ 
with respect to the bilinear pairing above. 
For each $1 \le a \le g$, let $\{b_a\}$ denote a 
homogeneous $V$-basis and let $\{\bbar _a\}$ 
be the dual basis with respect to the bilinear pairing 
$\langle ., .\rangle_1$,  i.e., with  
$\rho=1$. 
In the Sections \ref{genuszero0}--\ref{apala} we identify 
  a Schottky sewing parameter with the formal parameter $\rho_a$.  
 Define for $1 \le a \le g$, for 
\begin{eqnarray}
\label{gensher}
b_{-a}=\rho_a^{\wt(b_a)}\bbar _a. 
\end{eqnarray}
Then $\{b_{-a}\}$ is a dual basis for the bilinear pairing 
 $\langle .,.\rangle_{\rho_a}$ 
with adjoint 
for $u\in V$ quasiprimary, i.e., $L(1)u=0$, of weight $N$. 
The identification $L_\rho^\dagger(n)=\rho^n L(-n)$, follows from 
\begin{eqnarray}
\label{tuta} 
u^\dagger_{\rho_a}(m)=(-1)^N\rho_a^{m+1-N}u(2N-2-m). 
\end{eqnarray} 
Let ${\bf b}_+=b_1\otimes\ldots \otimes b_g$ 
denote an element of a $V^{\otimes g}$-basis. 
Then let $w_a$ for $a\in \{-1$, $\ldots$, $-g$, $1$, $\ldots$, $g\}$
 be $2g$ formal variables.   
A bilinear pairing $\langle .,. \rangle_\rho$ on $V$   
is called invariant if for all $u$, $v$, $w\in V$, 
$\langle Y (u, z)v, w\rangle_\rho = \langle v, Y_\rho^\dagger
(u, z) w\rangle_\rho$. 
The pairing $\langle .,.\rangle_\rho$
 is symmetric and invertible where $\langle u, v\rangle_\rho=0$ for 
$\wt(u)\neq\wt(v)$ \cite{FHL} and  
$\langle u, v\rangle_\rho=\rho^N\langle u, v\rangle_1$, 
$N=\wt(u)=\wt(v)$ for homogeneous $u$, $v$.  
In this paper we assume that $V$ is of strong conformal field theory-type,
 i.e., $V_{(0)}= \C\vac$ and $L(1)V_{(1)}= 0$. 
Then the bilinear pairing with normalization 
$\langle \vac, \vac \rangle_\rho=1$
 is unique\cite{L}.
 We also assume that $V$ is simple and isomorphic to the 
contragredient $V$-module $V'$ \cite{FHL}. 
Then the bilinear pairing is non-degenerate \cite{L}.
\subsection{The group of automorphisms of formal parameters}
\label{groupppp}
Asume that $V$ is a quasi-conformal vertex operator algebra. 
Let us recall some facts \cite{BZF} relating 
 generators of Virasoro algebra with the group of 
automorphisms of local coordinates. 
An element of ${\rm Aut}_z \; \mathcal O^{(1)}$ is representable by the map
given by the power series, in particular, in an exponential form
through $\beta_k \in \mathbb C$ via combinations of $a_k$, $k\ge 0$,
\begin{equation}
\label{lempa} 
z \mapsto \rho=\rho(z), \; 
\rho(z) = \sum\limits_{k \ge 1} a_k z^k, 
\;
f(z) = \exp \left(  \sum\limits_{k > -1} \beta_{k }\; z^{k+1} \partial_{z} \right) 
\left(\beta_0 \right)^{z \partial_z}.z.  
\end{equation}
 In terms of differential operators 
 a representation of Virasoro algebra modes
 is given by \cite{K} for $m \in \Z$,  
\begin{equation}
\label{repro}
L_W(m) \mapsto - \zeta^{m+1}\partial_\zeta. 
\end{equation}
Using \eqref{lempa} we obtain a system of recursive equations 
  solved for all $\beta_k$. 
One finds for $v \in V$ of a Virasoro generator commutation formula 
\begin{eqnarray}
\label{solzhe}
&&
\left[L_W(n), Y_W (v, z) \right] 
=  \sum\limits_{m \geq -1}  
 \frac{1}{(m+1)!} \left(\partial^{m+1}_z z^{m+1}  \right)\;  
Y_W (L_V(m) v, z).   
\end{eqnarray}
 One introduces the operator 
$\beta=-\sum_{n \geq -1} \beta_n L_W(n)$, 
for a vector field  
$\beta(z)\partial_z= \sum_{n \geq -1} \beta_n z^{n+1} \partial_z$,  
which belongs to local Lie algebra of ${\rm Aut}\; \mathcal O^{(1)}$. 
From the expansion of $\beta(z)\partial_z$ we obtain 
\begin{lemma} 
\label{tulgas}
$\left[\beta, Y_W (v, z) \right] 
=  - \sum_{m \geq -1}  
 \frac{1}{(m+1)!} \left(\partial^{m+1}_z \beta(z)  \right)\;  
Y_W (L_V(m) v, z)$.   \hfill $\qed$ 
\end{lemma}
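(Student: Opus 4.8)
The plan is to obtain the stated commutator directly from the single-generator formula \eqref{solzhe} by exploiting the linearity of the bracket $[\,\cdot\,,Y_W(v,z)]$ in its first slot. First I would substitute the definition $\beta=-\sum_{n\geq -1}\beta_n L_W(n)$, so that $[\beta,Y_W(v,z)]=-\sum_{n\geq -1}\beta_n\,[L_W(n),Y_W(v,z)]$. Here the overall minus sign carried by the definition of $\beta$ is precisely what will produce the minus sign appearing in the claimed identity, so no extra sign bookkeeping is needed beyond propagating this one factor.

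Next I would insert \eqref{solzhe} into each bracket $[L_W(n),Y_W(v,z)]$, whose summand carries the factor $\tfrac{1}{(m+1)!}\bigl(\partial_z^{m+1}z^{n+1}\bigr)Y_W(L_V(m)v,z)$, yielding a double sum indexed by $n\geq -1$ and $m\geq -1$. The decisive manipulation is to interchange the two summations and to pull the differential operator $\partial_z^{m+1}$ out of the sum over $n$; the inner sum then collapses to $\partial_z^{m+1}\bigl(\sum_{n\geq -1}\beta_n z^{n+1}\bigr)$. Recognizing $\sum_{n\geq -1}\beta_n z^{n+1}=\beta(z)$ as the defining series of the vector field $\beta(z)\partial_z$ immediately produces $-\sum_{m\geq -1}\tfrac{1}{(m+1)!}\bigl(\partial_z^{m+1}\beta(z)\bigr)Y_W(L_V(m)v,z)$, which is the assertion of the Lemma.

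The main point requiring justification is the legitimacy of interchanging the two infinite summations, but in the formal setting this is harmless and reduces to a termwise-finite count. For a fixed homogeneous $v$, the lower truncation condition forces $L_V(m)v=0$ for all sufficiently large $m$, so the sum over $m$ is effectively finite; moreover $\partial_z^{m+1}z^{n+1}$ is a nonzero multiple of $z^{n-m}$ only when $n\geq m$ and vanishes otherwise, so for each fixed output power of $z$ only finitely many pairs $(n,m)$ contribute. Hence the double series may be rearranged freely, and the computation is valid as an identity of formal series in $z$ with coefficients in $\End(V)$. I would then finish by extending the identity from homogeneous $v$ to arbitrary $v$ by linearity.
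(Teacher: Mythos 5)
Your proposal is correct and is exactly the argument the paper intends: the paper offers no written proof beyond the remark that the lemma follows ``from the expansion of $\beta(z)\partial_z$,'' which amounts precisely to your steps of expanding $\beta=-\sum_{n\geq -1}\beta_n L_W(n)$ by linearity, inserting \eqref{solzhe}, and interchanging the two sums so that the inner sum reassembles into $\partial_z^{m+1}\beta(z)$. Two small merits of your write-up worth keeping: you correctly read the factor in \eqref{solzhe} as $\partial_z^{m+1}z^{n+1}$ (the paper's display misprints it as $\partial_z^{m+1}z^{m+1}$, which would be independent of $n$ and cannot be right), and you supply the termwise-finiteness justification (via lower truncation and the vanishing of $\partial_z^{m+1}z^{n+1}$ for $m>n$) that the paper silently assumes.
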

When a vertex operator algebra carries an action of ${\rm Der}\; \mathcal O^{(n)}$ 
with commutation formula of Lemma \ref{tulgas} for any   
$v \in  V$, $z=z_j$, $1 \le j \le n$,
  the element $L_V(-1)= - \partial_z$ is the translation  
operator $L_V(0) = -z\partial_z$ that 
 acts semi-simply with integral eigenvalues, 
and the Lie subalgebra  
${\rm Der}_+ \; \mathcal O^{(n)}$ acts locally nilpotently, 
then one calls it quasi-conformal.  
A vector $A$ of a quasi-conformal 
 vertex algebra $V$ satisfying  $L_V(k) A = 0$, $k > 0$,   
 $L_W(0) A = D(A) A$, is called  
primary of conformal dimension $D(A) \in  \mathbb Z_+$. 
The invariance of vertex operators multiplied by conformal weight differentials
follows from the formula of Lemma \ref{tulgas}. 
A conformal vertex algebra  
is a conformal vertex algebra $V$ 
 equipped with an action of 
Virasoro algebra and therefore its Lie subalgebra ${\rm Der}_0 \; \mathcal O^{(n)}$
given by the Lie algebra of ${\rm Aut} \; \mathcal O^{(n)}$.  
By using the identification \eqref{repro},
one introduces the linear operator 
 $P\left(f (\zeta) \right)= 
\exp \left( \sum_{m >  0} (m+1)\; \beta_m \; L_V(m) \right) \beta_0^{L_W(0)}$,   
 representing $f(\zeta)$ \eqref{lempa} 
via Lemma \ref{tulgas} 
for quasi-conformal vertex algebra
Under the action of all operators 
$P(f)$, $f \in  {\rm Aut}\; \mathcal O^{(1)}$ 
on vertex algebra elements $v\in V_{(n)}$
the number of terms in finite, 
 and subspaces $V_{(\le m)} = \bigoplus_{ n \ge K}^m V_{(n)}$ are stable. 
 One has \cite{BZF} the following 
\begin{lemma}
 The map $f \mapsto P(f)$ is a 
  a representation of ${\rm Aut}\; \mathcal O^{(1)}$ on $V$,  
 $P(f_1 * f_2) = P(f_1) \; P(f_2)$,   
 which is the inductive limit of the
representations $V_{(\le m)}$, $m\ge K$ with some $K$.  \hfill \qed 
\end{lemma}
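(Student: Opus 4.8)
The plan is to obtain $f \mapsto P(f)$ by integrating a Lie algebra representation, using the pro-algebraic group structure of ${\rm Aut}\,\mathcal O^{(1)}$ together with the local nilpotency supplied by quasi-conformality. First I would record the group decomposition ${\rm Aut}\,\mathcal O^{(1)} = \mathbb G_m \ltimes {\rm Aut}_+\,\mathcal O^{(1)}$, where $\mathbb G_m$ consists of the rescalings $z \mapsto \beta_0 z$ with $\beta_0 \neq 0$ and ${\rm Aut}_+\,\mathcal O^{(1)}$ is the prounipotent group of changes with $\beta_0 = 1$, whose Lie algebra is ${\rm Der}_+\,\mathcal O^{(1)} = z^2\mathbb C[[z]]\partial_z$. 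Via \eqref{repro} and Lemma \ref{tulgas}, the vector field $\beta(z)\partial_z = \sum_{n \ge -1}\beta_n z^{n+1}\partial_z$ acts on $V$ through $\beta = -\sum_{n \ge -1}\beta_n L_W(n)$, and its restriction to ${\rm Der}_0\,\mathcal O^{(1)}$ (spanned by the $L(n)$, $n \ge 0$) is a genuine Lie algebra homomorphism into $\End V$: the Virasoro cocycle $\frac{C}{2}\binom{m+1}{3}\delta_{m,-n}\Id_V$ vanishes on every pair $m,n \ge 0$, since $\delta_{m,-n} \neq 0$ forces $m=n=0$ and then $\binom{1}{3}=0$, so no central extension intervenes.

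Second, I would check that $P(f)$ is a well-defined invertible operator. By the grading-shift rule \eqref{genroooor}, the mode $L(m) = \omega(m+1)$ lowers conformal weight by $m$, hence each $L(m)$ with $m > 0$ is locally nilpotent; combined with the stability of the finite-dimensional subspaces $V_{(\le m)}$ noted above, the exponential $\exp\!\big(\sum_{m>0}(m+1)\beta_m L(m)\big)$ truncates to a finite sum on each $V_{(\le m)}$, while $\beta_0^{L_W(0)}$ acts semisimply by the grading. Thus $P(f)$ preserves each $V_{(\le m)}$ and is invertible there.

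The main step, and the main obstacle, is the homomorphism property $P(f_1 * f_2) = P(f_1)\,P(f_2)$, which I would prove factor by factor. On the unipotent part every element is $\exp(v)$ for a unique $v \in {\rm Der}_+\,\mathcal O^{(1)}$, and composition obeys Baker--Campbell--Hausdorff, $\exp(u) * \exp(v) = \exp({\rm BCH}(u,v))$; since $v \mapsto \pi(v)$ is a Lie algebra homomorphism and ${\rm BCH}$ is built from iterated brackets, $P(\exp u)\,P(\exp v) = \exp(\pi(u))\exp(\pi(v)) = \exp(\pi({\rm BCH}(u,v))) = P(\exp u * \exp v)$, all exponentials being finite on each $V_{(\le m)}$. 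For rescalings, $\beta_0^{L_W(0)}$ composed with $(\beta_0')^{L_W(0)}$ gives $(\beta_0\beta_0')^{L_W(0)}$ because $L_W(0)$ is a single commuting operator, and the semidirect interaction reduces to the adjoint relation $\beta_0^{L(0)}L(m)\beta_0^{-L(0)} = \beta_0^{-m}L(m)$, which reproduces how a rescaling conjugates a higher coordinate change. The genuine difficulty is matching the explicit $\beta_k$-parametrization of $P(f)$ in \eqref{lempa} --- including the normalization factor $(m+1)$ and the sign in $\beta = -\sum_n \beta_n L_W(n)$ --- with this abstract exponential, and fixing the convention for $*$ so that composition of coordinate changes matches operator order rather than its reverse; this is the bookkeeping, via the recursive system for the $\beta_k$ coming from \eqref{lempa}, that carries the bulk of the verification.

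Finally, the inductive-limit claim is immediate: each $V_{(\le m)} = \bigoplus_{K \le n \le m} V_{(n)}$ is finite-dimensional, as $\dim V_{(n)} < \infty$, and ${\rm Aut}\,\mathcal O^{(1)}$-stable, so $P$ restricts to a finite-dimensional representation on it; the inclusions $V_{(\le m)} \hookrightarrow V_{(\le m+1)}$ are equivariant and $V = \varinjlim_m V_{(\le m)}$, which exhibits $P$ as the asserted inductive limit.
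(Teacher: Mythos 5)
The paper contains no proof to compare against: the lemma is stated with an immediate \qed{} and attributed to \cite{BZF}, so the implicit ``proof'' is a citation. Your argument reconstructs, in outline correctly, exactly the exponentiation argument that the cited source uses. All of your ingredients are sound: the decomposition ${\rm Aut}\,\mathcal O^{(1)}\cong\mathbb{G}_m\ltimes{\rm Aut}_+\,\mathcal O^{(1)}$; the observation that the central term $\frac{C}{2}\binom{m+1}{3}\delta_{m,-n}\Id_V$ vanishes for $m,n\ge 0$ (so the non-negative modes give an honest, non-centrally-extended action of ${\rm Der}_0\,\mathcal O^{(1)}$); exponentiation of the prounipotent factor using local nilpotency, which quasi-conformality postulates and which your weight-lowering argument via \eqref{genroooor} independently verifies because the grading of $V$ is bounded below; Baker--Campbell--Hausdorff for the homomorphism property on ${\rm Aut}_+\,\mathcal O^{(1)}$, legitimate here because on each finite-dimensional $V_{(\le m)}$ the image of ${\rm Der}_+\,\mathcal O^{(1)}$ strictly lowers the weight, hence consists of nilpotent operators and all exponentials are polynomial; the adjoint relation $\beta_0^{L(0)}L(m)\beta_0^{-L(0)}=\beta_0^{-m}L(m)$ for the semidirect interaction; and stability plus finite-dimensionality of the subspaces $V_{(\le m)}$ for the inductive-limit clause.

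The step you defer deserves a sharper remark than ``bookkeeping,'' because against the paper's printed formulas it cannot be carried out at all: under \eqref{repro} the vector field $\beta_k z^{k+1}\partial_z$ corresponds to $-\beta_k L_W(k)$ and $\beta_0^{z\partial_z}$ to $\beta_0^{-L_W(0)}$, so the operator attached to $f$ of \eqref{lempa} by integrating the Lie algebra action is $\exp\bigl(-\sum_{k>0}\beta_k L(k)\bigr)\beta_0^{-L(0)}$, whereas \eqref{poperator} exponentiates $+\sum_{k>0}(k+1)\beta_k L(k)$ and uses $\beta_0^{+L_W(0)}$; neither the signs nor the spurious factor $(k+1)$ are compatible with \eqref{lempa} and \eqref{repro} as written (nor with the conventions of \cite{BZF}, where no such factor appears). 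So your homomorphism property holds for the operator defined by the exponential coordinates of \eqref{lempa}, and \eqref{poperator} must be corrected, or its $\beta_k$ reinterpreted, before it enjoys the stated property. With that proviso your proof is complete, and it is the same argument as in the source the paper cites in place of a proof.
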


\end{document}